\documentclass[11pt,leqno]{amsart}

\usepackage[T1]{fontenc}
\usepackage[utf8]{inputenc}
\usepackage[british]{babel} 

\usepackage[a4paper,margin=30mm,includeheadfoot]{geometry}
\usepackage{microtype}
\usepackage{fancyhdr}
\usepackage{mathtools}  
\usepackage{amssymb,amsfonts,amsthm}
\usepackage{mathrsfs}
\usepackage{bm}
\numberwithin{equation}{section}
\allowdisplaybreaks

\usepackage{graphicx}   
\usepackage{tikz}
\usepackage{tikz-cd}
\usetikzlibrary{calc,positioning}
\tikzcdset{arrow style=tikz}
\usepackage{tikz-3dplot}

\usepackage{enumitem}
\setlist[enumerate]{label=\textnormal{(\arabic*)},leftmargin=*}

\usepackage{xcolor}
\definecolor{cadmiumgreen}{rgb}{0.0,0.42,0.24}
\definecolor{darkred}{rgb}{0.65, 0.0, 0.0}
\definecolor{darkblue}{rgb}{0,0,0.65}

\usepackage[colorlinks,linkcolor=cadmiumgreen,citecolor=cadmiumgreen,urlcolor=cadmiumgreen]{hyperref}
\usepackage{bookmark} 
\hypersetup{
  pdfauthor={Seungkyu Lee},
  pdftitle={SL-TMSV},
  pdfstartview=FitV
}

\usepackage{aliascnt}

\theoremstyle{plain}
\newtheorem{theorem}{Theorem}[section]

\newaliascnt{proposition}{theorem}
\newtheorem{proposition}[proposition]{Proposition}
\aliascntresetthe{proposition}

\newaliascnt{lemma}{theorem}
\newtheorem{lemma}[lemma]{Lemma}
\aliascntresetthe{lemma}

\newaliascnt{corollary}{theorem}
\newtheorem{corollary}[corollary]{Corollary}
\aliascntresetthe{corollary}

\newaliascnt{claim}{theorem}

\aliascntresetthe{claim}

\theoremstyle{definition}
\newaliascnt{definition}{theorem}
\newtheorem{definition}[definition]{Definition}
\aliascntresetthe{definition}

\theoremstyle{remark}
\newaliascnt{remark}{theorem}
\newtheorem{remark}[remark]{Remark}
\aliascntresetthe{remark}

\newaliascnt{example}{theorem}

\aliascntresetthe{example}

\usepackage[nameinlink,capitalize]{cleveref}
\crefname{theorem}{Theorem}{Theorems}
\crefname{proposition}{Proposition}{Propositions}
\crefname{lemma}{Lemma}{Lemmas}
\crefname{corollary}{Corollary}{Corollaries}
\crefname{claim}{Claim}{Claims}
\crefname{definition}{Definition}{Definitions}
\crefname{remark}{Remark}{Remarks}
\crefname{example}{Example}{Examples}
\crefname{equation}{Equation}{Equations}
\crefname{section}{Section}{Sections}

\DeclareMathOperator{\Hom}{Hom}

\newcommand{\precd}{\prec \!\!\! \cdot \;}

\newcommand{\msc}[1]{\href{https://mathscinet.ams.org/msc/msc2020.html?t=&s=#1}{#1}}

\usepackage[color=yellow!30,textsize=small]{todonotes}

\title[Tropical matroid Schubert varieties]{Tropical matroid Schubert varieties and the graded M\"obius algebra}
\author{Seungkyu Lee}
\address{LMO, Universit\'e Paris-Saclay \& CMLS, \'Ecole Polytechnique}
\email{\href{mailto:seungkyu.lee@universite-paris-saclay.fr}{seungkyu.lee@universite-paris-saclay.fr}}
\date{}
\subjclass[2020]{Primary \msc{14T20}; Secondary \msc{14T15}, \msc{05B35}, \msc{14N20}}
\keywords{tropical geometry, tropical cohomology, matroids, graded M\"obius algebra, arrangement Schubert varieties, augmented Bergman fans}

\begin{document}

\begin{abstract}
  We introduce tropical matroid Schubert varieties, a tropical analogue of arrangement Schubert varieties associated with realisable matroids. We prove that the tropical cohomology ring of the tropical matroid Schubert variety associated to any matroid $M$ is isomorphic to the graded M\"obius algebra 
  $\operatorname{B}^\bullet(M)$.  This yields a geometric model for $\operatorname{B}^\bullet(M)$, extending the geometric setting of arrangement Schubert varieties to arbitrary matroids,  including non-realisable ones.
\end{abstract}
\maketitle
\thispagestyle{fancy}
\tableofcontents

\section{Introduction}\label{intro}
The top-heavy conjecture posed by Dowling and Wilson \cite{Dowling1974,Dowling1975} was proved for realisable matroids 
by Huh and Wang in \cite{Huh2017}. 
The key idea of their approach is that the Hilbert series of the cohomology ring of an arrangement Schubert variety $Y_{\mathcal A}$, 
associated with a hyperplane arrangement $\mathcal A$,
as introduced in \cite{Ardila2016},
coincides with the generating function for the Whitney numbers of the second kind 
of the matroid associated with $\mathcal A$, and that there is a natural inclusion
$\operatorname{H}^\bullet(Y_{\mathcal A}) \hookrightarrow \operatorname{IH}^\bullet(Y_{\mathcal A})$.
This argument applies only to realisable matroids, since it relies on the geometry of the (singular) algebraic variety $Y_{\mathcal A}$. 
Later, Braden--Huh--Matherne--Proudfoot--Wang generalised this proof to non-realisable matroids in 
\cite{Braden2020} by algebraically defining analogues of $\operatorname{H}^\bullet(Y_{\mathcal A})$ and $\operatorname{IH}^\bullet(Y_{\mathcal A})$ 
for an arbitrary matroid $M$, namely the graded M\"obius algebra $\operatorname{B}^\bullet(M)$ and the matroid intersection cohomology $\operatorname{IH}^\bullet(M)$ of $M${\color{blue},} respectively.
However, for non-realisable matroids, geometric models for these objects remained unknown.
In this paper, we construct the tropical matroid Schubert variety $Y_M$ and 
show that its tropical cohomology is isomorphic to the graded M\"obius algebra $\operatorname{B}^\bullet(M)$. 

Even when $M$ is realisable over $\mathbb{C}$, this still remains interesting from the perspective of tropical cohomology:
it provides a family of examples of complex algebraic varieties that are not rationally smooth, 
equivalently, whose intersection cohomology complex is not isomorphic to the constant sheaf (\cite[Proposition 8.3]{Fiebig2014}), 
and whose singular cohomology ring nevertheless coincides with the tropical cohomology ring of their tropicalisations.
To the best of our knowledge, 
this appears to be the first result establishing compatibility between 
these cohomology theories when the tropical variety is not regular at infinity in the sense of \cite{Mikhalkin2014} (see \cref{rmkcohomtrop} for more details). 
Our results suggest that compactifications of tropical varieties that are not necessarily regular at infinity may contain interesting geometric features and therefore should be studied further.
\subsection{Results}

Let $M$ be a matroid of rank $d$ with ground set $E$ and let $\Sigma_M^{+}$ denote the augmented Bergman fan of the matroid $M$ in $\mathbb{R}^E$ defined in \cite{MR4477425}.
Let $U_M$ be the support of $\Sigma_M^{+}$ in $\mathbb{R}^E$. 
Let $\mathbb{TP}^1 = \mathbb{R}\cup\{-\infty,+\infty\}$ be the tropical projective line.
Then $U_M$ is a polyhedral subspace in $(\mathbb{TP}^1)^E$.
We define the tropical matroid Schubert variety 
$Y_M$ of $M$ as the closure of $U_M$ in $(\mathbb{TP}^1)^E$. 
We will show that it admits an extended polyhedral structure and hence is an extended polyhedral space in the sense of \cite{Mikhalkin2014, Jell2019, Itenberg2019, Amini2020}.
Moreover, it is a tropical variety, see \cref{deftropvar}.

\smallskip

Denote by $\operatorname{H}^{p,q}(Y_M)$ the $(p,q)$-th tropical cohomology of $Y_M$ with $\mathbb{R}$-coefficients, 
see \cref{tropvar} for the definition. Our first result is the following. 
\begin{theorem}\label{thm1}
  The tropical cohomology of $Y_M$ is concentrated in bidegrees $(p,p)$, that is, 
  for $p \neq q$, $\operatorname{H}^{p,q}(Y_M) = 0$.
\end{theorem}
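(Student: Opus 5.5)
We plan to prove \cref{thm1} by stratifying $Y_M$ and inducting on the matroid, using long exact sequences of pairs (or the Mayer--Vietoris / open-closed sequence) in tropical cohomology.

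\textbf{Step 1: the stratification.} The boundary $Y_M\setminus U_M$ is the union of the strata of $(\mathbb{TP}^1)^E$ where some coordinates are $\pm\infty$. We first show that each boundary stratum of $Y_M$ is again a tropical matroid Schubert variety of a smaller matroid, or a product of such with a point. Concretely:
\begin{itemize}
\item sending coordinates in $S\subseteq E$ to $-\infty$ should give $Y_{M\setminus S}$, via the deletion;
\item sending a flat-type set $F$ to $+\infty$ should give $Y_{M/F}$, via the contraction.
\end{itemize}
This reflects the fact that the cones of $\Sigma_M^+$ are indexed by compatible pairs (independent set $I$, flag of flats), so that recession directions $\pm e_i$ correspond to these operations. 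We expect a decomposition of $Y_M$ into locally closed pieces $U_{M'}$, one for each such minor $M'$ (essentially indexed by flats $F$ together with deletions), mirroring the affine paving of arrangement Schubert varieties into cells indexed by flats.

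\textbf{Step 2: the open pieces.} We compute the tropical cohomology, with compact supports, of each open piece $U_{M'}=|\Sigma_{M'}^+|$. Because $U_{M'}$ is a fan, it is contractible, so $\operatorname{H}^{p,q}(U_{M'})=\operatorname{H}^{p,0}$ is concentrated in $q=0$. For the compactly supported version, we use Poincaré duality for $U_{M'}$: the augmented Bergman fan is a tropical smooth fan, i.e.\ locally matroidal, so duality holds. This gives $\operatorname{H}_c^{p,q}(U_{M'})\cong \operatorname{H}^{r-p,r-q}(U_{M'})^\vee$, where $r=\operatorname{rk}M'$. Hence this group is nonzero only if $q=r$; moreover $\operatorname{H}^{r-p,0}$ of a matroidal fan vanishes unless $p=r$. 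Indeed, $\operatorname{H}^{k,0}$ of a fan is $\mathbf F^k$ of the cone point, and the superform/Orlik--Solomon-type calculation for an augmented Bergman fan should give $\mathbf F^k=0$ for $k>0$, since the fan contains the full orthant direction. This step needs checking; a safer alternative is to compute $\operatorname{H}_c$ directly via the cellular (Borel--Moore) chain complex of the fan. Either way, each piece contributes only in bidegree $(r,r)$, i.e.\ it behaves like a tropical affine cell $\mathbb{R}^r$.

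\textbf{Step 3: assembling.} We order the strata by dimension and use the long exact sequence
\[
\cdots\to \operatorname{H}_c^{p,q}(U)\to \operatorname{H}^{p,q}(Z\cup U)\to \operatorname{H}^{p,q}(Z)\to\cdots,
\]
where $Z$ is a closed union of strata; note that $Y_M$ is compact, so $\operatorname{H}=\operatorname{H}_c$. By induction, $\operatorname{H}^{p,q}(Z)$ is concentrated on the diagonal, and the new piece contributes only in bidegree $(r,r)$. The connecting maps go from $(p,q)$ to $(p,q+1)$, so they join off-diagonal and diagonal terms and therefore vanish. The sequence then splits, and all off-diagonal groups stay zero.

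\textbf{Main obstacle.} The main obstacle is Step 1 together with justifying the exact sequences. We must show that the closure in $(\mathbb{TP}^1)^E$ has exactly these strata, and that it is an extended polyhedral space to which the sheaf-theoretic long exact sequence applies. Because $Y_M$ is not regular at infinity, local finite-type behaviour near $\pm\infty$ has to be checked directly, by exhibiting the local cone structure at each boundary point.
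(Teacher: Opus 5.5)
Your Step~2 rests on a false claim, and Step~3 fails with it. For any fan $\Sigma$ of dimension $r$ and any top-dimensional cone $\sigma$ one has $\bigwedge^k\langle\sigma\rangle\subseteq\mathbf{F}_k(\underline 0)$. Hence $\operatorname{H}^{k,0}(|\Sigma|)=\mathbf{F}^k(\underline 0)\neq 0$ for every $0\le k\le r$.

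For an augmented Bergman fan, the orthant cones $\operatorname{cone}(e_i)_{i\in I}$ with $I$ independent are what make $\mathbf{F}_k(\underline 0)$ large, not zero. The vectors $\bigwedge_{i\in I}e_i$ with $|I|=k$ form a basis, so the dimension is $f^k_{M'}$. Consequently $\operatorname{H}^{p,r}_{\mathrm c}(U_{M'})\cong\mathbf{F}_{r-p}(\underline 0)\neq 0$ for all $0\le p\le r$. A stratum, even $\mathbb{R}^r$ itself with $\operatorname{H}^{p,r}_{\mathrm c}(\mathbb{R}^r)\cong\bigwedge^{r-p}\mathbb{R}^r$, behaves like a torus $(\mathbb{C}^*)^r$ rather than a cell $\mathbb{C}^r$; the tropical analogue of a cell is $\mathbb{T}^r$.

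Take the Boolean matroid on $E=\{1\}$, where $Y_M=\mathbb{TP}^1=\{-\infty\}\sqcup\mathbb{R}\sqcup\{+\infty\}$. The open stratum contributes $\operatorname{H}^{0,1}_{\mathrm c}(\mathbb{R})\cong\mathbb{R}$ off the diagonal. The vanishing $\operatorname{H}^{0,1}(\mathbb{TP}^1)=0$ holds only because the connecting map $\operatorname{H}^{0,0}(\{\pm\infty\})\cong\mathbb{R}^2\to\operatorname{H}^{0,1}_{\mathrm c}(\mathbb{R})$ is surjective. So the connecting maps do not vanish for degree reasons. Their non-vanishing is the whole content of the theorem, and your plan has no way of computing them.

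The paper supplies exactly this computation. Its strata are indexed by admissible pairs $(I,F)$:
\begin{itemize}
\item the $+\infty$ coordinates form an independent set $I$;
\item the $-\infty$ coordinates form $E\setminus F$ for a flat $F$;
\item the stratum is $U_{M(I,F)}$ with $M(I,F)=(M^F)_I$.
\end{itemize}
So your Step~1 is also off: one contracts an independent set, not a flat, and pairs with isomorphic minors must be kept distinct.

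The rank filtration gives an $E_1$ page $\bigoplus_{\operatorname{rk}(I,F)=a}\mathbf{F}_{a-p}(\underline 0_{(I,F)})$ concentrated in a single row. By \cref{lemwedge}, its differential is $u\mapsto -e_j\wedge u$ from $(I\sqcup j,F)$ to $(I,F)$, and $u\mapsto e_{F\setminus G}\wedge u$ from $(I,G)$ to $(I,F)$ when $G\precd F$. A second filtration by $r_M(F)$ splits the associated graded into Koszul complexes on $\mathbb{R}^J$ with differential $-e_J\wedge(\,\cdot\,)$. There is one such complex for each flat $F$ and each independent $J\subseteq F$ with $|J|=r_M(F)-p$. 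These are acyclic unless $J=\varnothing$, which leaves one copy of $\mathbb{R}$ per flat of rank $p$, in bidegree $(p,p)$.

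Your affine-paving picture can be rescued, but only for the coarser locally closed pieces $\bigsqcup_{I}U_{M(I,F)}$, with $F$ fixed and $I\subseteq F$ independent. These are the tropical counterparts of the complex cells $\mathbb{C}^{r_M(F)}$, and with them your Step~3 assembly does work. However, proving that each such piece has $\operatorname{H}_{\mathrm c}$ equal to $\mathbb{R}$ in bidegree $(r_M(F),r_M(F))$ is precisely this Koszul computation, and that is the missing idea.
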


In the case when $M$ is realisable over $\mathbb{C}$, this is directly analogous to the 
algebraic-geometric situation. If $M$ is realised by a hyperplane arrangement 
$\mathcal A$ over $\mathbb{C}$, the singular cohomology $\operatorname{H}^\bullet (Y_{\mathcal A})$ is pure of Hodge--Tate type, i.e., the cohomology is concentrated in 
even degrees and $\operatorname{H}^{2p}(Y_{\mathcal A})$ is pure of Hodge $(p,p)$-type.
This follows from the following quick argument. 
That $\operatorname{H}^\bullet (Y_\mathcal{A})$ is pure Hodge follows from the fact that affine paved varieties are pure Hodge, see \cite[Theorem 2.1, Theorem 3.1]{Bj_rner2009}.
Then \cite[Section 4]{Huh2017} states that $Y_\mathcal{A}$ has a smooth resolution 
$X_{\overline{\mathcal{A}}} \to Y_\mathcal{A}$ 
by a wonderful compactification $X_{\overline{\mathcal{A}}}$ 
which induces an inclusion
\[
\operatorname{H}^\bullet(Y_\mathcal{A}) \hookrightarrow \operatorname{IH}^\bullet(Y_\mathcal{A}) \hookrightarrow \operatorname{H}^\bullet(X_{\overline{\mathcal{A}}}).
\]
The first inclusion is due to the pure Hodge structure of $\operatorname{H}^\bullet(Y_\mathcal{A})$ and the second comes from the decomposition theorem by \cite{beilinson1982faisceaux}. 
Moreover, the cycle class map of $X_{\overline{\mathcal{A}}}$ gives an isomorphism between its Chow ring and its singular cohomology ring. This implies that $\operatorname{H}^\bullet(X_{\overline{\mathcal{A}}})$ is pure of Hodge--Tate type. 
Since $\operatorname{H}^\bullet({Y_\mathcal{A}})$ injects 
(as a morphism of mixed Hodge structures) into a pure Hodge--Tate structure, 
it is itself pure of Hodge--Tate. 

\medskip

Since $\operatorname{H}^{\bullet,\bullet}(Y_M)$ is concentrated in $(p,p)$-degrees,
we write $\operatorname{H}^\bullet(Y_M)$ as the tropical cohomology of $Y_M$ where odd degrees vanish
and $\operatorname{H}^{2p}(Y_M) = \operatorname{H}^{p,p}(Y_M)$.

\begin{theorem}\label{thm2}
  There is an isomorphism of graded algebras 
  \[\Psi :  \operatorname{H}^{\bullet}(Y_M) \to \operatorname{B}^\bullet(M)\]
  where $\operatorname{B}^p(M)$ is of cohomological degree $2p$.
\end{theorem}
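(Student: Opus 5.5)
The plan is to compute $\operatorname{H}^{\bullet}(Y_M)$ through the stratification of $Y_M$ by its boundary strata, and to compare the answer with the flat-indexed basis $\{y_F\}$ of $\operatorname{B}^\bullet(M)$.

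\textbf{Strata.} Each point of $(\mathbb{TP}^1)^E$ determines a partition of $E$ into three sets: the coordinates equal to $+\infty$, those equal to $-\infty$, and the finite ones. The $+\infty$ directions of $U_M$ are the recession directions of the augmented Bergman fan $\Sigma_M^+$. These are the cones $\mathrm{cone}(e_F)$ over flags of flats, together with the negative directions $-e_i$ for $i$ in independent sets. I would show that the strata of $Y_M$ at infinity are indexed by flats, and that each closed stratum is identified with the tropical matroid Schubert variety of a minor ($Y_{M/F}$) or a restriction ($Y_{M|F}$). The open strata are products of a tropical affine piece with a Bergman-fan part. This gives an extended polyhedral structure in which the top-dimensional behaviour of $Y_M$ near infinity is a cone over flats. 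This mirrors the affine paving of $Y_{\mathcal A}$ by cells $\mathbb{C}^{\operatorname{rk}F}$ indexed by flats $F$.

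\textbf{Induction.} I would then use the long exact sequence of tropical cohomology for a closed substratum $Z\subset Y_M$ and its open complement. Alternatively I would use the Mayer--Vietoris / cellular spectral sequence attached to the stratification. The open pieces should be tropically contractible in the appropriate sense: they are tropical affine spaces $\mathbb{T}^{r}\times$ (cone), whose $(p,q)$-cohomology with compact supports, or relative cohomology, is $\mathbb{R}$ in bidegree $(r,r)$ and $0$ elsewhere. Filtering $Y_M$ by $\{\operatorname{rk}F\le k\}$ makes each stratum contribute one class in bidegree $(\operatorname{rk}F,\operatorname{rk}F)$. Since everything sits on the diagonal, the spectral sequence degenerates, which recovers \cref{thm1}. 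It also gives $\dim \operatorname{H}^{p,p}(Y_M)$ equal to the number of flats of rank $p$, which is $\dim \operatorname{B}^p(M)$.

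\textbf{Map and ring structure.} To define $\Psi$, I take the dual basis of tropical fundamental classes $[Y_{F}]$ of closed strata. Alternatively, and more canonically, I would use tropical Poincar\'e-type cycle classes: the tropical cycle class map from the cycle group of strata closures to $\operatorname{H}^{p,p}$. I would send the class associated with the flat $F$ to $y_F\in\operatorname{B}^{\operatorname{rk}F}(M)$. Bijectivity then follows from the dimension count above, provided these classes form a basis, which comes from the filtration argument. The key point is multiplicativity: $y_F\cdot y_G=y_{F\vee G}$ if $\operatorname{rk}F+\operatorname{rk}G=\operatorname{rk}(F\vee G)$, and $0$ otherwise. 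I would realise the classes by explicit superforms or piecewise-linear functions: products of coordinate functions $\max(x_i,0)$-type tropical divisors $D_i$, with $y_{\{i\}}\mapsto [D_i]$. The relation in $\operatorname{B}^\bullet(M)$ is then a statement that tropical intersection of the divisors $D_i$ on $Y_M$ computes the stratum $Y_{\operatorname{cl}(S)}$ when $S$ is independent, and vanishes otherwise for degree reasons. Since $\operatorname{B}^\bullet(M)$ is generated in degree one by the atoms $y_i$ (the elements $y_F$ for rank-one flats, i.e. parallel classes) modulo these relations, it then suffices to check the relations and surjectivity.

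\textbf{Main obstacle.} The hardest step is comparing cup product with intersection of divisors on a space that is not regular at infinity. Tropical Poincar\'e duality fails on $Y_M$, so I cannot simply invoke intersection theory. I would first try to pull back along a tropical resolution $\pi:\overline{\Sigma}\to Y_M$, the compactified augmented Bergman fan (analogue of $X_{\overline{\mathcal A}}$). On that smooth space cohomology equals the augmented Chow ring $\operatorname{CH}(M)$. I would show that $\pi^*$ is injective, using the stratification count together with the known injection $\operatorname{B}^\bullet(M)\hookrightarrow\operatorname{CH}(M)$ of \cite{Braden2020}. Products can then be computed in $\operatorname{CH}(M)$, where the identity $y_F\cdot y_G$ is known.
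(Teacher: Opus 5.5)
\textbf{Gap 1: the cell computation.} Your overall architecture matches the paper's: an additive count from a stratification, then pulling back to $\overline{\Sigma^+_M}$ and computing products in the augmented Chow ring. The additive step, however, rests on a claim you neither prove nor describe correctly.

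The orbit strata of $Y_M$ in $(\mathbb{TP}^1)^E$ are indexed by admissible pairs $(I,F)$, not by flats: $Y_M\cap O(I,E\setminus F)$ is the support of $\Sigma^+_{M(I,F)}$. The recession rays are $e_i$ and $-e_{E\setminus F}$, not $e_F$. So the coordinates sent to $-\infty$ are those in $E\setminus F$, and the closed flat stratum is $Y_{M|F}$, of dimension $\operatorname{rk}F$. With $Y_{M/F}$ you would count $W_{d-p}(M)$, which differs from $W_p(M)$ in general.

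Your flat cell $C_F=\bigsqcup_{I}U_{M(I,F)}$ is the partial compactification of $|\Sigma^+_{M|F}|$ in the $e_i$-directions. It is not of the form $\mathbb{T}^r\times(\text{cone})$: for two parallel elements, $C_E$ is a tropical line with two of its three ends compactified.

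That $\operatorname{H}^{p,q}_{\mathrm{c}}(C_F)$ is $\mathbb{R}$ in bidegree $(\operatorname{rk}F,\operatorname{rk}F)$ and zero otherwise is true. But it is exactly the content of \cref{secrankfilt}, and ``tropically contractible'' does not prove it. The missing argument runs as follows:
\begin{enumerate}
\item Filter by the rank of $M(I,F)$.
\item Apply Poincar\'e duality on each locally matroidal stratum $U_{M(I,F)}$. This puts $E_1$ in a single row, $\bigoplus\mathbf{F}_{a-p}(\underline{0}_{(I,F)})$, with basis $w_S$ indexed by independent sets $I\sqcup S$.
\item Identify $d_1$ with $u\mapsto\mathbf{n}\wedge u$ (\cref{lemwedge}).
\item Observe that for fixed $F$ the complex splits into Koszul complexes ${}_pD^\bullet(J,F)$ with differential $-e_J\wedge-$, which are acyclic unless $J=\varnothing$.
\end{enumerate}
Without this you have neither \cref{thm1} nor \cref{dimeq}.

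\textbf{Gap 2: injectivity of $\pi^*$.} The dimension count together with the mere existence of an injection $\operatorname{B}^\bullet(M)\hookrightarrow\operatorname{CH}(M)$ does not make $\pi^*$ injective. You need the image of $\pi^*$ to contain that copy of $\operatorname{B}^\bullet(M)$.

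The paper gets this from the square $X_M\hookrightarrow X_E$, $Y_M\hookrightarrow(\mathbb{TP}^1)^E$ of \cref{commdiagresol}. The hyperplane classes $y_i$ of $(\mathbb{TP}^1)^E$ restrict to $Y_M$; these are your $[D_i]$. Via the Amini--Piquerez identification $\operatorname{H}^{\bullet,\bullet}(\overline{\Sigma})\simeq\operatorname{A}^\bullet(\Sigma)$, which is compatible with these pullbacks, they map to the classes $y_i\in\operatorname{A}^\bullet(\Sigma^+_M)$. By \cite[Proposition 2.15]{MR4477425}, these generate a subalgebra isomorphic to $\operatorname{B}^\bullet(M)$. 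Hence a subalgebra of $\operatorname{H}^\bullet(Y_M)$ surjects onto $\operatorname{B}^\bullet(M)$, and $\dim\operatorname{H}^{p,p}(Y_M)=W_p(M)$ then forces $\pi^*$ to be injective with image $\operatorname{B}^\bullet(M)$. Your outline contains these ingredients but not this link.

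Two smaller points. First, as you note yourself, you cannot define $\Psi$ through cycle classes of strata without Poincar\'e duality, and a dual basis of fundamental classes gives no handle on products. Second, the vanishing of $\prod_{i\in S}[D_i]$ for dependent $S$ is not a degree phenomenon. For parallel elements $1,2$ one has $y_1y_2=0$ in degree $2$, while $\operatorname{B}^2(M)$ may be nonzero; the vanishing comes from failure of rank additivity.
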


This theorem identifies the tropical cohomology of $Y_M$ and the graded M\"obius algebra, extending
the case of arrangement Schubert varieties for realisable matroids to arbitrary matroids.

\subsection{Idea of the proofs}\label{idea}
We briefly discuss the ideas of proofs of the main theorems \cref{thm1,thm2}.

\cref{thm1} is deduced from the
rank spectral sequence associated to a stratification of $Y_M$ by 
augmented Bergman fans. To describe this stratification, we first introduce the notion of admissible pairs.

An admissible pair $(I,F)$ of $M$ is a pair where $I$ is an independent set and $F$ is a flat such that $I \subseteq F$.
For an admissible pair $(I,F)$, we define its matroid minor $M(I,F)$ of $M$ by first localising at $F$
and then contracting $I$ (see \cref{defadmpair} for details).

\begin{theorem}[\cref{rankstrat} and \cref{stratYM}]\label{thm3}
  $Y_M$ admits a stratification by the support of augmented Bergman fans labelled by admissible pairs
  \[Y_M = \coprod_{(I,F) \text{ admissible}} U_{M(I,F)}.\]
  The partial order on admissible pairs induced by the stratification
   is given by 
   \begin{center}
  $(J,G) \preceq (I,F)$ 
  if and only if 
  $I \subseteq J$ and $ G \preceq F$.
  \end{center}
\end{theorem}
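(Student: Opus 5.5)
Unpack the definitions coordinatewise. Write $\Sigma_M^{+}$ concretely: its cones are $\sigma_{I\le\mathcal F}=\mathrm{cone}(e_i : i\in I)+\mathrm{cone}(-e_{F_1},\dots,-e_{F_k})$. Here $I$ is independent, $\mathcal F=(F_1\subsetneq\dots\subsetneq F_k)$ is a chain of proper flats, $I\subseteq F_1$, and $e_S=\sum_{s\in S}e_s$. These are the cones of \cite{MR4477425}, transported to the sign convention used in the paper.

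A point $x\in(\mathbb{TP}^1)^E$ determines two sets: $J=\{i : x_i=+\infty\}$ and $\{i: x_i=-\infty\}$. I would set $G$ to be the complement of the latter, so the infinite $-\infty$ coordinates are exactly $E\setminus G$. To see which $(J,G)$ occur in the closure of $U_M$, consider a sequence $y^{(n)}\in\sigma_{I\le\mathcal F}$ with $y^{(n)}\to x$. Coordinates can tend to $+\infty$ only along the rays $e_i$ with $i\in I$. Coordinates can tend to $-\infty$ only through the terms $-\lambda_j e_{F_j}$ with $\lambda_j\to\infty$. In that case the set of $-\infty$ coordinates is the complement of some $F_j$, or equals all of $E$, which corresponds to $F=\emptyset$.

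A $+\infty$ coordinate must remain in the surviving flat. Hence $J\subseteq I\subseteq F_1\subseteq G$, where $G$ is a flat (one of the $F_j$, or $E$). So $J$ is independent and $(J,G)$ is admissible. Conversely, every admissible pair is attained by sending the appropriate parameters to infinity. This gives the set-theoretic decomposition $Y_M=\coprod_{(J,G)}S_{(J,G)}$, where $S_{(J,G)}$ is the locus with exactly these infinite coordinates.

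Next I identify each stratum. Projecting to the finite coordinates $G\setminus J\subseteq\mathbb{R}^{G\setminus J}$, I claim that $S_{(J,G)}$ is exactly $U_{M(J,G)}$. Here $M(J,G)=(M|G)/J$. This amounts to the following identification of cones. The limiting cones that remain are $\mathrm{cone}(e_i: i\in I\setminus J)$ plus the negative cones on the remaining chain members below $G$, restricted to $G\setminus J$. The flats $F_j\subseteq G$ containing $J$, minus $J$, are precisely the flats of $(M|G)/J$. Moreover, $I\setminus J$ is independent in that contraction.

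The hard part is the reverse inclusion: an arbitrary point of $U_{M(J,G)}$ must really be a limit. This requires cones of $\Sigma^+_{M|G}$ whose chain flats need not contain $J$. I would handle it using the fact that the support of an augmented Bergman fan depends only on the matroid, not on the fan structure. Concretely, $U_M=\{y : \text{for each } t,\ \{i: y_i\ge t\}\text{-type sets satisfy rank conditions}\}$. That is, I would use a support-level description: $y\in U_M$ iff, for every $t$, the set $\{i: y_i<t\}$ has closure condition $\mathrm{rk}(\{i: y_i\ge t\})$ equal to its size whenever $t>0$, and the sets $\{y_i\le t\}$ are complements of flats for $t<0$. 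Verifying this description and its behaviour under adding $\pm\infty$ coordinates is where I would spend the effort. Once it is available, the limiting argument becomes a direct check on level sets, and closedness of $\overline{U_M}$ gives equality.

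Finally, for the order, I compute when $S_{(J,G)}\subseteq\overline{S_{(I,F)}}$. By the same limit argument applied inside $U_{M(I,F)}$, more coordinates can become $+\infty$, giving $I\subseteq J$. More coordinates can also become $-\infty$, which shrinks the surviving flat, giving $G\subseteq F$; this is $G\preceq F$ in the lattice of flats. The converse follows since $(J,G)$ appears as a stratum in the closure of $U_{M(I,F)}$, which I identify with $Y_{M(I,F)}$ by applying the first part to the minor.
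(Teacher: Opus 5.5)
Your forward direction matches the paper (\cref{rankstrat}, \cref{stratYM} and the appendix): a limit of points of one cone $\sigma_{I,\mathcal F}$ has $+\infty$-set $J\subseteq I$ and $-\infty$-set $E\setminus G$ with $G\in\mathcal F\cup\{E\}$, and its finite part lies in the projected cone. So does your treatment of the order via $\overline{U_{M(I,F)}}=Y_{M(I,F)}$ inside $V(I,E\setminus F)\cong(\mathbb{TP}^1)^{F\setminus I}$. One correction: the rays of $\Sigma^+_M$ are $-e_{E\setminus F_j}$, not $-e_{F_j}$. Your claim that the $-\infty$-set is the complement of some $F_j$ only holds with the former.

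The genuine gap is the reverse inclusion $U_{M(J,G)}\subseteq Y_M\cap O(J,E\setminus G)$. You defer it to a level-set description of $|\Sigma^+_M|$. As written, that description does not parse: the condition mixing $\{i:y_i<t\}$ with $\operatorname{rk}(\{i:y_i\ge t\})$ is garbled. You also never verify it. So this half of the stratum identification is unproved. Your order argument applies the identification to $M(I,F)$, so it inherits the same gap. Your stated reason for the detour is also wrong. No cones whose flags avoid $J$ are needed, as your own preceding sentence about the flats of $(M|G)/J$ already shows.

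The missing step takes two lines, and it is exactly how the paper argues (the ``vice versa'' in \cref{rankstrat} together with \cref{projclos}).
\begin{itemize}
\item Since $J$ is independent, for $I'\subseteq G\setminus J$ we have $I'\in\mathcal I((M|G)/J)$ iff $I'\sqcup J\in\mathcal I(M)$.
\item The flats of $(M|G)/J$ are exactly $H\setminus J$ for $H\in L(M)$ with $\operatorname{cl}_M(J)\subseteq H\subseteq G$.
\item Hence every compatible pair $(I',\mathcal F')$ of $M(J,G)$ lifts to the compatible pair $(I'\sqcup J,\ \{H'\sqcup J\}_{H'\in\mathcal F'}\cup\{G\})$ of $M$, dropping $G$ if $G=E$.
\item The lifted cone $\sigma$ satisfies $\pi_{J,E\setminus G}(\sigma)=\sigma_{I',\mathcal F'}$.
\item Take $v=e_J-e_{E\setminus G}$, which lies in $\sigma\cap\operatorname{relint}(\eta_{J,E\setminus G})$. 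For every $q\in\sigma$, the points $q+tv$ stay in $\sigma\subseteq U_M$ and converge to $\pi_{J,E\setminus G}(q)$ as $t\to+\infty$.
\end{itemize}
So every point of $U_{M(J,G)}$ lies in $Y_M$. If you prefer to keep your route, the correct characterisation is: $y\in U_M$ iff $\{i:y_i>0\}\in\mathcal I(M)$ and $\{i:y_i\ge t\}\in L(M)$ for all $t\le 0$. This is true, but it would need its own proof, and it is unnecessary here.
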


We construct a rank spectral sequence induced by this stratification and use it to prove \cref{thm1}.
See \cref{rankspprop,rankofflatfiltr} for details.
Moreover, we show that $\dim \operatorname{H}^{p,p}(Y_M)=\dim \operatorname{B}^p(M)$ in \cref{rankofflatfiltr,dimeq}.

Since the Whitney numbers of the second kind (first introduced in \cite{MR1503085}) are not palindromic in general, 
the dimensions in \cref{dimeq} need not satisfy Poincaré duality; hence $Y_M$ need not be smooth in any reasonable sense.
To show that there exists an isomorphism of graded algebras as in \cref{thm2}, 
the dimension count alone is not enough
since we have to check if the tropical cup product is compatible with the product in the graded M\"obius algebra.
For this, we take advantage of a tropical analogue of a resolution of singularities.

Viewing $Y_M$ as a closed subvariety of $(\mathbb{TP}^1)^E$, 
we can perform a tropical analogue of successive toric blow-ups of $(\mathbb{TP}^1)^E$ and obtain
$\overline{\Sigma^+_E}=\mathbb{TP}_{\Sigma^+_E}$,
where $\Sigma^+_E$ is the augmented Bergman fan of the Boolean matroid on $E$ (see \cref{matroids}).
More precisely, this is the map induced by the inclusion of $\Sigma^+_E$ into the fan of $(\mathbb{TP}^1)^E$.
Let $\overline{\Sigma^+_M}$ be the canonical compactification of $\Sigma^+_M$.
We show that the following diagram commutes:

\[\begin{tikzcd}
	{\overline{\Sigma^+_M}} & {\overline{\Sigma^+_E}} \\
	{Y_M} & {(\mathbb{TP}^1)^E.}
	\arrow["/"{marking, font=\tiny}, hook, from=1-1, to=1-2]
	\arrow[two heads, from=1-1, to=2-1]
	\arrow[two heads, from=1-2, to=2-2]
	\arrow["/"{marking, font=\tiny}, hook, from=2-1, to=2-2]
\end{tikzcd}\]
By the result of \cite{Amini2024c}, 
the tropical cohomology rings of ${\overline{\Sigma^+_M}}$, ${\overline{\Sigma^+_E}}$, and ${(\mathbb{TP}^1)^E}$
coincide with the Chow rings of the corresponding fans.
We then show that the induced pullback map on cohomology is injective
by using the identification with Chow rings and 
matching the dimensions of the graded pieces.
We denote the pullback map by $\Phi$.

\begin{proposition}\label{propinc}
  The map 
  \[\Phi: \operatorname{H}^{\bullet,\bullet}(Y_M) \hookrightarrow \operatorname{H}^{\bullet,\bullet}(\overline{\Sigma^+_M})\]
  is an injective homomorphism of bigraded algebras.
\end{proposition}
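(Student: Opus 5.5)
The map $\Phi$ will be the pullback along the proper surjection $\pi\colon\overline{\Sigma^+_M}\to Y_M$ in the commutative square of \cref{idea}. Functoriality of tropical cohomology under morphisms of extended polyhedral spaces, together with compatibility of the cup product with pullbacks, makes $\Phi$ a homomorphism of bigraded algebras. The substance is injectivity. I will deduce it from the square
\[
\pi^*\circ\iota_Y^* \;=\; \iota_\Sigma^*\circ\rho^*\colon \operatorname{H}^{\bullet,\bullet}((\mathbb{TP}^1)^E)\to \operatorname{H}^{\bullet,\bullet}(\overline{\Sigma^+_M}),
\]
where $\iota_Y\colon Y_M\hookrightarrow(\mathbb{TP}^1)^E$ and $\iota_\Sigma\colon\overline{\Sigma^+_M}\hookrightarrow\overline{\Sigma^+_E}$ are the inclusions and $\rho\colon\overline{\Sigma^+_E}\to(\mathbb{TP}^1)^E$ is the blow-down. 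The plan has three steps.

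\textbf{Step 1 (surjectivity of the restriction to $Y_M$).} I will show that $\iota_Y^*\colon \operatorname{H}^{p,p}((\mathbb{TP}^1)^E)\to \operatorname{H}^{p,p}(Y_M)$ is surjective. By \cite{Amini2024c}, $\operatorname{H}^{\bullet}((\mathbb{TP}^1)^E)$ is generated by the classes $x_i$ of the boundary divisors $\{x_i=-\infty\}$, with $x_i^2=0$, so it has monomial basis $x_S$ for $S\subseteq E$. I will use the rank spectral sequence of the stratification of \cref{thm3}, which underlies \cref{rankofflatfiltr,dimeq}. Comparing the filtrations, the restrictions of the $x_S$ should hit every graded piece of the flat filtration of $\operatorname{H}^{p,p}(Y_M)$. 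The guess is that $x_S$ restricts to a class supported on the closure of the stratum indexed by the closure $\operatorname{cl}(S)$. This step also yields that $\operatorname{H}^{\bullet,\bullet}(Y_M)$ is generated in degree $(1,1)$.

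\textbf{Step 2 (computing the composite).} Using the Chow ring descriptions of \cite{Amini2024c}, I will compute $\iota_\Sigma^*\rho^*(x_S)$ in $A^\bullet(\Sigma^+_M)$. The pullback of $x_i$ to $\overline{\Sigma^+_E}$ is the sum of $y_i$ and the divisors $x_F$ with $i\notin F$. It then restricts to the corresponding element of the augmented Chow ring $\operatorname{CH}(M)$. Products $x_S$ are expected to depend only on $\operatorname{cl}(S)$ up to zero, and to map onto the known copy of $\operatorname{B}^\bullet(M)$ inside $\operatorname{CH}(M)$ from \cite{Braden2020}. There, $\operatorname{B}(M)$ embeds into the augmented Chow ring via $y_F\mapsto\prod_{i\in I}(\text{pullback of }x_i)$ for any basis $I$ of $F$. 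This embedding is injective, so the image of the composite has dimension $\sum_p|\{\text{flats of rank }p\}|$.

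\textbf{Step 3 (dimension count).} By Step 1, the image of $\Phi=\pi^*$ equals the image of the composite, which has dimension $\dim\operatorname{B}^p(M)$ in degree $(p,p)$ by Step 2. This equals $\dim\operatorname{H}^{p,p}(Y_M)$ by \cref{dimeq}, and the off-diagonal groups vanish by \cref{thm1}. Hence $\Phi$ is injective.

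I expect Step 1, surjectivity from the ambient space, to be the main obstacle, since it requires understanding the spectral sequence maps functorially rather than only their dimensions. If it resists, the fallback is to take the elements of Step 2 as given and prove directly, via the dimension equality, that they lift to $Y_M$ and span. Concretely, I would show that the restriction of $\Phi$ to each stratum closure is compatible with the rank filtration and argue by induction on the admissible pairs ordered as in \cref{thm3}.
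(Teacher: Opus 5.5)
Your strategy is the paper's. It uses the commutative square through $\overline{\Sigma^+_E}$ and $(\mathbb{TP}^1)^E$, and identifies the cohomology of the three toric compactifications with Chow rings via \cite{Amini2024c}. It then uses the fact that the subalgebra of $\operatorname{A}^\bullet(\Sigma^+_M)$ generated by the pulled-back hyperplane classes is $\operatorname{B}^\bullet(M)$; the paper cites \cite[Proposition 2.15]{MR4477425} for this. Finally it uses the dimension count of \cref{dimeq}.

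The one correction is that your Step 1, which you flag as the main obstacle and only sketch as a guess, is not needed. Commutativity of the square gives only the inclusion
\[
\operatorname{im}(\Phi)\supseteq \operatorname{im}(\Phi\circ\iota_Y^*)=\operatorname{im}(\iota_\Sigma^*\circ\rho^*),
\]
but that inclusion suffices. In degree $(p,p)$ the right-hand side has dimension $\dim\operatorname{B}^p(M)=W_p(M)=\dim\operatorname{H}^{p,p}(Y_M)$. So the rank of $\Phi$ on $\operatorname{H}^{p,p}(Y_M)$ is at least the dimension of its source, which forces injectivity; the off-diagonal groups vanish by \cref{thm1}.

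Surjectivity of $\iota_Y^*$, and hence generation in degree $(1,1)$, then follow as consequences rather than inputs. The injective map $\Phi$ sends $\operatorname{im}(\iota_Y^*)$ onto a space of dimension $\dim\operatorname{H}^{p,p}(Y_M)$, so $\operatorname{im}(\iota_Y^*)=\operatorname{H}^{p,p}(Y_M)$. This is exactly how the paper argues with $R^\bullet=\operatorname{im}(i^*)$. You can therefore drop Step 1 and the fallback induction over admissible pairs entirely.

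There is also a minor slip in Step 2. The pullback of the class of $\{x_i=-\infty\}$ to $\overline{\Sigma^+_E}$ is the divisor $\sum_{i\notin F}x_F$, which is linearly equivalent to $y_i$. It is not $y_i$ plus that sum. Rescaling generators does not change the subalgebra they generate, so this does not affect the argument.
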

Using this proposition, we complete the proof of \cref{thm2} by identifying the image of $\Phi$ 
with the graded M\"obius algebra.

\subsection*{Outline}
In \cref{prelim}, we review necessary preliminaries that are used throughout the paper. 
In \cref{combwhit}, we give a combinatorial identity that provides a combinatorial understanding 
of the spectral sequence used in \cref{secrankfilt}.
In \cref{geomofYM}, we explain the geometry of $Y_M$ and in particular its stratification by admissible pairs.
In \cref{secrankfilt}, we study the spectral sequences associated with the stratification constructed in \cref{geomofYM} 
to compute the $(p,q)$-th tropical cohomology groups of $Y_M$. 
In \cref{pullbackM}, we show the commutativity of the diagram depicted in \cref{idea}, yielding the identification of the tropical cohomology with the graded M\"obius algebra.
In \cref{comparisonsec}, we show that in the realisable case, $Y_{M}$ coincides with the extended tropicalisation of $Y_\mathcal{A}$.
In \cref{examples}, we present some examples of $Y_M$ and the stratification by admissible pairs.

\subsection*{Conventions and Notations}
Unless otherwise stated, all algebras (including cohomology rings) and vector spaces in this paper
are taken with $\mathbb{R}$-coefficients.
We note however that since $\Sigma_M^+$ is unimodular and saturated in the sense of \cite{Amini2024c}, 
all the main statements also hold with $\mathbb{Z}$-coefficients.
When there is no ambiguity, we may drop \emph{tropical} and \emph{extended}.
We write $\,\preceq\,$ for a partial order and $\precd$ for the covering relation, namely, we write $a\precd b$ if $a\prec b$ and there is no $c$ with $a\prec c \prec b$.
If a poset $\mathcal{P}$ has a unique minimal element, we denote it by $\underline{0}_\mathcal{P}$ or just $\underline{0}$.
For a face $\sigma$ of a polyhedral complex, we write $|\sigma|$ for its dimension.
By a cone, we always mean a closed polyhedral strongly convex cone.

\subsection*{Acknowledgements}
The author thanks Omid Amini for careful reading of the manuscript and for many comments that improved the exposition and organisation of this paper.
The author thanks Lyuhui Wu for a helpful early discussion. 
The author is grateful to Nicholas Proudfoot for helpful correspondence concerning arrangement Schubert varieties.

\section{Preliminaries}\label{prelim}
This section collects the notation and background used throughout the paper.
In \cref{matroids} we fix matroid notation (including minors and free coextensions), 
recall the augmented Bergman fan, and the graded M\"obius algebra.
In \cref{tropvar} we fix the category of extended polyhedral spaces and 
the tropical (co)homology theory we use (in the sense of \cite{Itenberg2019}), 
including cup and cap products, and Poincar\'e duality (in the sense of \cite{Shaw2013a,Gross2023}).

\subsection{Matroids and (augmented) Bergman fans}\label{matroids}
We briefly review matroid theory notions used in this paper. A standard reference is \cite{Oxley2011}.

Let $M$ be a matroid on $E=\{1,\dots,n\}$ of rank $d$.
We write $\mathcal I(M)$ for its independence complex, $L(M)$ for its lattice of flats,
$r_M$ for its rank function, ${cl}_M$ for its closure operator, 
$\chi_M (t)= \sum_k \chi_M^k t^k $ for its characteristic polynomial, 
and $\overline{\chi}_{M} (t) \coloneq \chi_M (t) / (t-1) = \sum_k \overline{\chi}_{M}^k t^k $ for its reduced characteristic polynomial.
We write $\operatorname{rk}(M)$ for its rank as a matroid.
We denote the dual matroid by $M^*$. 
Denote the $i$-th $f$-vector of $M$ by $f_M^i$ which is the number of independent sets of cardinality $i$.

For a flat $F\in L(M)$, let $M^F$ denote the localisation of $M$ to $F$ which is a matroid minor on $F$, and denote by $M_F$ the contraction of $M$ by $F$ which is a matroid minor on $E\setminus F$.
Similarly, for an independent set $I\in\mathcal I(M)$, let $M_I$ denote the contraction of $M$ by $I$ which is a matroid minor on $E \setminus I$.
For a single element $e\in E$, we denote by $M/e$ the contraction of $M$ by $e$.

Let $\widetilde{E} \coloneq E \sqcup \{0\}$.
We write $M+0$ for the free extension of $M$ by an element $0$ which is a matroid on $\widetilde{E}$.
\begin{definition}\label{def:coext}
The \emph{free coextension} of $M$, denoted by $\widetilde{M}$, is the matroid on $\widetilde{E}$
defined by $\widetilde{M}\coloneq(M^*+0)^*$.
\end{definition}
See \cite[7.2]{Oxley2011} for more details about free extensions and coextensions.

\begin{lemma}[{\cite[Remark 6.15.3c]{Brylawski}}]\label{coextf}
  Let $M$ be a matroid of rank $d$ and $\widetilde{M}$ be the free coextension of $M$. 
  Then $|\overline{\chi}^k_{\widetilde{M}}| = f^{d-k}_M$. 
\end{lemma}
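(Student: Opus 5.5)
The plan is to compute $\overline{\chi}_{\widetilde M}$ with Whitney's broken circuit theorem, using a linear order on $\widetilde E$ in which $0$ is the minimal element, and then to identify the relevant NBC sets with the independent sets of $M$.

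\emph{Step 1 (reduction to NBC sets containing $0$).} Recall that for a loopless matroid $N$ of rank $r$ with a linear order on its ground set,
\[
\chi_N(t)=\sum_{S\ \mathrm{NBC}}(-1)^{|S|}t^{\,r-|S|},
\]
where a broken circuit is a circuit with its minimal element removed. Every maximal NBC set contains the minimal element, and $S\mapsto S\cup\{0\}$ gives a bijection from NBC sets not containing $0$ to NBC sets containing $0$. It follows that
\[
\overline{\chi}_N(t)=\sum_{S\ \mathrm{NBC},\ 0\in S}(-1)^{|S|-1}t^{\,r-|S|}.
\]
For $N=\widetilde M$ we have $r=d+1$, since $\widetilde M$ is the dual of an extension of $M^*$ without changing its rank. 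Hence $|\overline{\chi}^k_{\widetilde M}|$ equals the number of NBC sets $S=\{0\}\cup I$ of $\widetilde M$ with $|I|=d-k$. It therefore suffices to show that $\{0\}\cup I$ is NBC in $\widetilde M$ if and only if $I\in\mathcal I(M)$. Looplessness is harmless: $\widetilde M$ has no loop at $0$, and if $M$ has loops both sides vanish in the relevant sense; this needs a brief check or a looplessness assumption.

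\emph{Step 2 (structure of $\widetilde M$).} By duality of deletion and contraction,
\[
\widetilde M/0=(M^*+0\setminus 0)^*=M .
\]
I would then establish the rank formula
\[
r_{\widetilde M}(A)=\min\bigl(r_M(A)+1,\ |A|\bigr)\quad\text{for } A\subseteq E .
\]
To do this, I would use the dual rank formula $r_{N^*}(A)=|A|-r_N(\widetilde E)+r_N(\widetilde E\setminus A)$, together with the rank function of the free extension: $r_{M^*+0}(B\cup 0)=\min(r_{M^*}(B)+1,\ n-d)$ and $r_{M^*+0}(B)=r_{M^*}(B)$. This computation is the step I expect to be the main obstacle. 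It is routine but fiddly, since one must track when $\widetilde E\setminus A$ contains $0$ and whether the corresponding set spans $M^*$.

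\emph{Step 3 (the bijection).} Circuits of $\widetilde M$ containing $0$ are the sets $C\cup 0$ with $C$ a circuit of $\widetilde M/0=M$. Removing the minimum $0$ shows that every circuit of $M$ is a broken circuit. So if $\{0\}\cup I$ is NBC, then $I$ contains no circuit of $M$, i.e.\ $I\in\mathcal I(M)$.

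Conversely, let $I\in\mathcal I(M)$. Then $\{0\}\cup I$ is independent in $\widetilde M$, because $\widetilde M/0=M$. Broken circuits coming from circuits that contain $0$ are circuits of $M$, so they cannot lie in $I$. It remains to treat a circuit $C\subseteq E$ of $\widetilde M$. By the rank formula of Step 2, minimal dependence means $|C|=r_M(C)+2$. Hence
\[
|C\setminus\min C|=r_M(C)+1>r_M(C\setminus \min C),
\]
so $C\setminus\min C$ is dependent in $M$ and cannot lie inside $I$. Thus $\{0\}\cup I$ is NBC.

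Counting the sets $I\in\mathcal I(M)$ with $|I|=d-k$ then gives $|\overline{\chi}^k_{\widetilde M}|=f^{d-k}_M$.
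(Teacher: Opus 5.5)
Your proof is correct, and it takes a genuinely different route from the paper. The paper gives no argument and simply cites Brylawski's survey of the Tutte polynomial. In that language the lemma amounts to the identity $\overline{\chi}_{\widetilde M}(t)=(-1)^d\,T_M(1-t,1)=\sum_{I\in\mathcal I(M)}(-1)^{|I|}t^{d-|I|}$, since $T_M(x,1)$ enumerates independent sets. Your broken-circuit argument proves exactly this identity from first principles. It also produces an explicit bijection between NBC sets of $\widetilde M$ containing $0$ and independent sets of $M$. This fits how the paper later indexes a basis of $\mathbf{F}_{a-p}(\underline{0}_{(I,F)})$ by independent sets in the proof of \cref{rankofflatfiltr}, whereas the citation is shorter but opaque. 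Step 2 is also not a real obstacle. Plugging $r_{M^*}(E\setminus A)=n-|A|-d+r_M(A)$ into $r_{\widetilde M}(A)=|A|-(n-d)+\min\bigl(r_{M^*}(E\setminus A)+1,\,n-d\bigr)$ gives $\min(r_M(A)+1,|A|)$ at once. In the same way one gets $r_{\widetilde M}(A\cup 0)=r_M(A)+1$.

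Two points should be tightened.

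First, drop the hedge about loops: no hypothesis on $M$ is needed. Step 2 never uses looplessness of $M$, and it shows that $\widetilde M$ is always loopless: $r_{\widetilde M}(\{e\})=\min(r_M(e)+1,1)=1$ for $e\in E$, and $r_{\widetilde M}(\{0\})=1$. Your claim that ``both sides vanish'' when $M$ has loops is false. For $M$ a single loop, $\widetilde M=U_{1,2}$ and $\overline{\chi}_{\widetilde M}=1=f^0_M$. This case matters, because the paper applies the lemma to the minors $M(I,F)$, which may have loops.

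Second, the assertion that the circuits of $\widetilde M$ through $0$ are exactly the sets $C\cup 0$ with $C$ a circuit of $M$ is not a general fact about contraction. A circuit of $N/e$ can be a circuit of $N$ avoiding $e$, for example when $e$ is a coloop. Here it does hold, because Step 2 gives $r_{\widetilde M}(C)=|C|=r_{\widetilde M}(C\cup 0)$. So $C\cup 0$ is dependent while all its proper subsets are independent. In the direction where you use this, you can bypass it entirely: an NBC set $\{0\}\cup I$ is independent in $\widetilde M$, hence $I$ is independent in $\widetilde M/0=M$.
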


\begin{definition}[{\cite{Huh2017}}]
  The \emph{graded M\"obius algebra} of $M$ denoted by $\operatorname{B}^\bullet (M)$
  is the graded vector space 
  \[\operatorname{B}^\bullet (M) = \bigoplus_{F \in L(M)} \mathbb{R}y_F\]
  where the degree of $y_F$ is the rank of $F$ with graded multiplication defined by 
  \[y_F \cdot y_G = \begin{cases}
   y_{F\vee G} & \text{ if } r_M(F\vee G) = r_M(F)+r_M(G)\\
   0 & \text{ else.}
  \end{cases}\]
\end{definition}

Let $(e_i)_{i\in E}$ be the standard basis of $\mathbb{R}^E$.
For a subset $S \subseteq E$, let $e_S \coloneq \sum_{i\in S}e_i$.
We write $\Sigma_M$ for the Bergman fan of $M$, 
following the convention of \cite{Adiprasito2018}, 
namely as a simplicial fan in $\mathbb{R}^E/\langle e_E\rangle$. 
This is a quotient version of the Bergman fan, 
which was first introduced in \cite{Ardila2006}.

\begin{definition}[{\cite{MR4477425}}]\label{def:compair}
  Let $I\in \mathcal{I}(M)$ and $\mathcal{F}$ be a flag of proper flats.  
  A \emph{compatible pair} is a pair $(I,\mathcal{F})$ where $I \subseteq F$ holds for all $F \in \mathcal{F}$.
\end{definition}

\begin{definition}[{\cite{MR4477425}}]
The \emph{augmented Bergman fan} of $M$ denoted by $\Sigma^+_M$ is a simplicial fan in $\mathbb{R}^E$
whose cones are of the form 
\[\sigma_{I,\mathcal{F}} = \operatorname{cone}(e_i)_{i\in I} + \operatorname{cone}(-e_{E\setminus F})_{F \in \mathcal{F}}\]
for each compatible pair $(I,\mathcal{F})$.
\end{definition}

If $M$ is the Boolean matroid on $E$, we simply write $\Sigma^+_E$ for its augmented Bergman fan.

\begin{remark}
 The Bergman fan and the augmented Bergman fan are defined for loopless matroids in the literature.
 However, we extend the definition of these for any matroid as follows. Let 
 $M$ be a matroid on $E$ and let
 $L \subseteq E$ be the set of loops.
 Then $M_L$ is a loopless matroid on $E\setminus L$.
 Consider the inclusion $i : \mathbb{R}^{E\setminus L} \hookrightarrow \mathbb{R}^{E}$.
 Then the Bergman fan and the augmented Bergman fan of $M$ 
 are defined by first defining the corresponding fans for $M_L$ 
 and taking the image induced by $i$.
\end{remark}

For a fan $\Sigma$, let $|\Sigma|$ be the support of $\Sigma$, i.e., $|\Sigma| = \bigcup_{\sigma\in \Sigma}\sigma$.
In \cite{Eur2023}, they explain that the support of 
the augmented Bergman fan of $M$ can be identified with 
the support of the Bergman fan of $\widetilde{M}$. We briefly describe this identification.

Let 
\[\gamma: \mathbb{R}^{\widetilde{E}}/\langle e_{\widetilde{E}} \rangle   \to \mathbb{R}^E\]
be the isomorphism of vector spaces given by
\[[a_0,a_1,\dots , a_n] \mapsto (a_1-a_0, \dots , a_n-a_0).\]

\begin{lemma}[{\cite[Lemma 5.14]{Eur2023}}]\label{coextaugber}
  The map $\gamma$ restricted on $|\Sigma_{\widetilde{M}}|$ has its image as $|\Sigma^+_M|$. 
\end{lemma}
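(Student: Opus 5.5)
The plan is to compare the two fans cone by cone. First I describe the flats of $\widetilde{M}$. Then I check that $\gamma$ sends each cone of $\Sigma_{\widetilde{M}}$ into a cone of $\Sigma^+_M$. Finally I show that the images cover every cone $\sigma_{I,\mathcal F}$.

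\textbf{Step 1: flats of $\widetilde{M}$.} I would use the convention of \cite{Adiprasito2018}: $\Sigma_{\widetilde{M}}$ has cones $\operatorname{cone}(e_{G_1},\dots,e_{G_k})$ modulo $e_{\widetilde{E}}$, one for each flag $G_1\subsetneq\dots\subsetneq G_k$ of nonempty proper flats of $\widetilde{M}$. (I assume $M$ loopless; the general case follows from the remark on loops.) The claim is that the nonempty proper flats of $\widetilde{M}=(M^*+0)^*$ are exactly:
\begin{enumerate}
\item the nonempty independent sets $I\in\mathcal I(M)$, viewed as subsets of $\widetilde{E}$ not containing $0$;
\item the sets $F\sqcup\{0\}$ with $F\in L(M)$ and $F\neq E$.
\end{enumerate}
To prove this I would dualise the rank function of the free extension, $r_{M^*+0}(S\cup 0)=\min(r_{M^*}(S)+1,\operatorname{rk}M^*)$. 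This gives $r_{\widetilde{M}}(S)=|S|$ for $0\notin S$ with $S\in\mathcal I(M)$, otherwise $r_{\widetilde{M}}(S)=r_M(S)+1$, and $r_{\widetilde{M}}(S\cup 0)=r_M(S)+1$. Equivalently, $\widetilde{M}/0=M$ and $\widetilde{M}\setminus 0$ is the truncation-free lift in which every $M$-independent set stays independent and closed. Reading off closures then yields the list above. In particular a basis $B$ of $M$ is a proper flat of $\widetilde{M}$, since $\operatorname{rk}\widetilde{M}=d+1$. I expect this matroid computation to be the main place where care is needed, especially the closedness of independent sets not containing $0$.

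\textbf{Step 2: images of rays and compatibility of flags.} On rays, $\gamma(e_I)=e_I$ because $a_0=0$. Also $\gamma(e_{F\sqcup 0})=e_F-e_E=-e_{E\setminus F}$. A flag of flats of $\widetilde{M}$ has the form
\[
I_1\subsetneq\dots\subsetneq I_a\subsetneq F_1\sqcup 0\subsetneq\dots\subsetneq F_b\sqcup 0.
\]
Sets of type (1) cannot contain sets of type (2), so all sets of type (1) come first. Such a flag satisfies $I_a\subseteq F_1$, which is exactly the compatibility condition of \cref{def:compair}. Hence $\gamma$ maps this cone onto
\[
\operatorname{cone}(e_{I_1},\dots,e_{I_a})+\operatorname{cone}(-e_{E\setminus F_j})_j,
\]
and this is contained in $\sigma_{I_a,\{F_1,\dots,F_b\}}$. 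This proves $\gamma(|\Sigma_{\widetilde{M}}|)\subseteq|\Sigma^+_M|$.

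\textbf{Step 3: covering.} For the reverse inclusion, fix a compatible pair $(I,\mathcal F)$. The positive orthant $\operatorname{cone}(e_i)_{i\in I}$ is the union of the cones $\operatorname{cone}(e_{I_1},\dots,e_{I_a})$ over maximal chains $I_1\subsetneq\dots\subsetneq I_a=I$; this is the braid subdivision of the orthant. Every subset of $I$ is independent in $M$, hence a flat of $\widetilde{M}$ by Step 1. Each such chain, extended by $\{F\sqcup 0: F\in\mathcal F\}$, is therefore a flag of flats of $\widetilde{M}$. It follows that $\sigma_{I,\mathcal F}$ is covered by images of cones of $\Sigma_{\widetilde{M}}$. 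Since $\gamma$ is a linear isomorphism, this gives $\gamma(|\Sigma_{\widetilde{M}}|)=|\Sigma^+_M|$.
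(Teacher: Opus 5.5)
Your proof is correct. The paper does not prove this lemma itself; it simply quotes it from \cite{Eur2023}, so there is no in-paper argument to compare with, and your proposal works as a self-contained verification.

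The key computations all check out:
\begin{itemize}
\item The rank formulas $r_{\widetilde M}(S)=\min(r_M(S)+1,|S|)$ for $S\subseteq E$ and $r_{\widetilde M}(T\sqcup 0)=r_M(T)+1$ are right.
\item The resulting description of the flats of $\widetilde M$ is right: the independent sets of $M$, together with the sets $F\sqcup 0$ for $F\in L(M)$.
\item The ray images $\gamma(e_I)=e_I$ and $\gamma(e_{F\sqcup 0})=-e_{E\setminus F}$ are right.
\item Every flag of flats of $\widetilde M$ produces a compatible pair in the sense of \cref{def:compair}.
\end{itemize}

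In fact, Steps 2 and 3 together prove slightly more than the statement. They show that $\gamma(\Sigma_{\widetilde M})$ is a simplicial refinement of $\Sigma^+_M$: each $\sigma_{I,\mathcal F}$ is subdivided along the braid subdivision of its orthant factor $\operatorname{cone}(e_i)_{i\in I}$. The paper only needs equality of supports. It uses that equality to transfer balancing, local matroidality and $\dim\mathbf{F}_p$ from $\Sigma_{\widetilde M}$ to $\Sigma^+_M$.

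One small correction concerns loops. You do not need to assume $M$ is loopless. Moreover, the reduction ``via the remark on loops'' is less immediate than you suggest, because $\widetilde M$ and $\widetilde{M\setminus L}$ live on different ground sets: a loop $\ell$ of $M$ becomes an element parallel to $0$ in $\widetilde M$.

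However, nothing in your computation actually uses looplessness. When $M$ has loops, the flats of $\widetilde M$ are still exactly the independent sets of $M$ and the sets $F\sqcup 0$ with $F\in L(M)$. Independent sets avoid $L$, and flats $F$ contain $L$. Hence the ray images $e_I$ and $-e_{E\setminus F}$ have vanishing loop coordinates, and Steps 2 and 3 go through verbatim against the compatible pairs of $M$.

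It is worth stating this explicitly. The paper applies the lemma to the minors $M(I,F)$, and these can have loops (see the example with $M(1,123)$).
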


\subsection{Tropical varieties}\label{tropvar}
We recall some notions from polyhedral geometry and tropical geometry.
Good references for this section include \cite{Itenberg2019,Jell2018,Amini2020,Gross2023}.

A \emph{polyhedron} in $\mathbb{R}^n$ is a finite intersection of half-spaces.
A \emph{polyhedral complex} $\mathcal{C}$ in $\mathbb{R}^n$ is a finite set of polyhedra in $\mathbb{R}^n$ such that 
for $P, Q \in \mathcal{C}$, the intersection $P \cap Q \in \mathcal{C}$ and $\mathcal{C}$ is closed under faces.
The support of the polyhedral complex $\mathcal{C}$ denoted by $|\mathcal{C}|$
is the union of all polyhedra in $\mathcal{C}$. We say $\mathcal{C}$ is a \emph{polyhedral structure} of $|\mathcal{C}|$.
For any $S \subseteq \mathbb{R}^n$ such that there exists a polyhedral structure $\mathcal{C}$, we say $S$ is a \emph{polyhedral set}. 
In particular, if $\mathcal{C}$ is a fan, i.e., a polyhedral complex of cones pointed at $\underline{0}$, 
we say $|S|$ is a \emph{fanfold}. 
For a cone $\eta \in \mathcal{C}$, we denote by $\langle \eta \rangle$ the vector subspace spanned by $\operatorname{relint}(\eta)$.
We denote by $\mathcal{C}_d$ the set of $d$-dimensional faces of $\mathcal{C}$.

Let $\mathbb{T}=\mathbb{R}\cup\{+\infty\}$ be equipped with the order topology.
An \emph{extended polyhedron} in $\mathbb{T}^n$ is the closure in $\mathbb{T}^n$ of a polyhedron in $\mathbb{R}^n$.
An \emph{extended polyhedral set} in $\mathbb{T}^n$ is defined analogously to a polyhedral set, 
with polyhedra replaced by extended polyhedra.
For an extended polyhedral set $S$ in $\mathbb{T}^n$, we may define a sheaf of abelian groups 
$\operatorname{Aff}_S$ by
\[U \subseteq S \text{ open } \mapsto \{ f : U \to \mathbb{R} \,\vert\, \text{ continuous locally affine} \}.\]

\begin{definition}[{\cite[Definition 2.2]{Gross2023}}]
  An \emph{extended polyhedral space} consists of a second-countable Hausdorff topological space $X$
  with a sheaf $\operatorname{Aff}_X$, such that for every point $x\in X$
  there exist an open neighbourhood $U\subseteq X$, an open subset $V$ of an extended polyhedron in $\mathbb{T}^n$
  (for some $n\in\mathbb{N}$), and a homeomorphism $\phi:U\to V$ whose pullback identifies the sheaves:
  the map $f\mapsto f\circ \phi$ induces an isomorphism $\phi^{-1}\operatorname{Aff}_V \xrightarrow{\sim} \operatorname{Aff}_U$.
  We call $(U,V,\phi)$ a \emph{chart}.
\end{definition}

\begin{definition}
  A \emph{morphism of extended polyhedral spaces} is a continuous map $f:X\to Y$
  which is \emph{locally affine}, i.e., for every open $V\subseteq Y$ and every
  $\varphi\in\operatorname{Aff}_Y(V)$, the pullback $\varphi\circ f$ lies in $\operatorname{Aff}_X(f^{-1}(V))$.
  Equivalently, $f$ induces a morphism of sheaves $f^{-1}\operatorname{Aff}_Y\to \operatorname{Aff}_X$.
\end{definition}

\begin{definition}
  Let $X$ be an extended polyhedral space. 
  An \emph{extended polyhedral structure} $\mathcal{X}$ on $X$ is given by choosing, for each chart $(U,V,\phi)$, 
  an extended polyhedral structure on $V$ (and hence on $U$ via $\phi$), compatibly on overlaps.  
\end{definition}

Let $\Delta$ be a simplicial fan of dimension $d$ in $ N =\mathbb{R}^n$. Let $N^\vee = \Hom(N, \mathbb{R})$.
Following \cite[Section 2.2]{Amini2024c}, 
we denote by $\mathbb{TP}_\Delta$ the \emph{tropical toric variety} associated with $\Delta$.

The construction is as follows. Let $\mathbb{R}_+$ and $\mathbb{T}_+$ be the non-negative subset of $\mathbb{R}$ and $\mathbb{T}$ respectively.
These are monoids with the addition operator.
Then $\mathbb{T}$ and $\mathbb{T}_+$ can be seen as modules over the
semiring $\mathbb{R}_+$.
For a cone $\eta\in\Delta$ of dimension $|\eta|$, set
\[\eta^\vee \coloneq \{ m \in N^\vee \, \vert \, \langle m,a \rangle \geq 0 \text{ for all } a \in \eta  \}.\]
Then the \emph{tropical affine toric variety} $A_\eta$ of $\eta$ is
\[
A_\eta \coloneq \Hom_{\mathbb{R}_+}(\eta^\vee,\mathbb{T}) = \mathbb{T}^{|\eta|}\times \mathbb{R}^{n-|\eta|}
\]
where the last equality is given by fixing a basis of $\langle \eta \rangle $ by the rays of $\eta$ and extending to $N$.
The space $\mathbb{TP}_\Delta$ is obtained by gluing $A_\eta$
accordingly with the face relation of $\Delta$.
In particular, $\mathbb{TP}_\Delta$ is an extended polyhedral space.

For each $\eta\in\Delta$, define the \emph{tropical torus orbit} labelled by $\eta$ to be
\[
O(\eta)\coloneq\{\infty_\eta\}\times \mathbb{R}^{n-|\eta|}\subseteq A_\eta,
\]
viewed inside $\mathbb{TP}_\Delta$ via the gluing. 
Here $\infty_\eta$ is the distinguished point, the limit of $t v$ as $t\to +\infty$ for any $v\in \operatorname{relint}(\eta)$.
Note that $O(\eta)$ is a vector space with the origin given by $\infty_\eta$.
Then $\mathbb{TP}_\Delta$ admits the orbit decomposition
\[
\mathbb{TP}_\Delta = \coprod_{\eta\in\Delta} O(\eta),
\]
analogous to the torus orbit stratification of a toric variety. 
In particular, $O(\underline{0}) = N$.
We write $V(\eta)$ the closure of $O(\eta)$ in $\mathbb{TP}_\Delta$.

By the identification of $\langle \eta \rangle$ and $\mathbb{R}^{|\eta|}\times \{0\}$ in $A_\eta$, 
we may consider $\eta$ as a subset of $A_\eta$.
Hence $\Delta$ is a subset of $\mathbb{TP}_\Delta$.
The \emph{canonical compactification} of $\Delta$ denoted by $\overline{\Delta}$
is the closure of $\Delta$ in $\mathbb{TP}_\Delta$. 
Let $\overline{\eta}$ be the closure of $\eta$ in $A_\eta$.
Equivalently, 
$\overline{\Delta} = \bigcup_{\eta\in \Delta} \overline{\eta}$.
In particular, if the fan $\Delta$ is complete, i.e., $|\Delta| = N$,
then $\overline{\Delta} = \mathbb{TP}_\Delta$.

Let $U$ be a fanfold in $N$. Let $\Delta$ be a simplicial fan such that $U \subseteq  |\Delta|$.
\begin{definition}
 The \emph{compactification} $Y$ of $U$ by $\Delta$ is the closure of $U$ in $\mathbb{TP}_\Delta$.
\end{definition}
Note that $Y$ is compact since for a fan structure $\Sigma$ of $U$, 
the closure of each maximal cone $\sigma \in \Sigma$ in $\mathbb{TP}_\Delta$ is compact.

Let $\mathcal{Y}$ be a polyhedral structure of $Y$.
For a face $\sigma\in\mathcal{Y}$, we define its \emph{sedentary type} to be the unique cone $\eta\in\Delta$ such that
\[
\operatorname{relint}(\sigma)\subseteq O(\eta)
\]
and write $\operatorname{sed}(\sigma) = \eta$.
In this case, we call $|\eta|$ the \emph{sedentarity} of $\sigma$. 
In fact, sedentary types can be intrinsically defined without an embedding to a tropical toric variety. 

For $\zeta' \preceq \zeta \in \Delta$, denote the projection map $O(\zeta') \twoheadrightarrow O(\zeta)$ by 
$\pi_{\zeta'}^\zeta$ 
which sends those rays of $\zeta$ that do not belong to $\zeta'$ to infinity.
For simplicity, we write $\pi^\zeta$ when ${\zeta'} = \underline{0}$.

We now recall the definition of tropical cohomology for extended polyhedral spaces.
While there is a sheaf-theoretic approach (see \cite{Jell2018, Gross2023}), 
we use the polyhedral complex definition (see \cite{Itenberg2019, Amini2024c}), 
which is the one used throughout this paper. 
For simplicity, we explain only the case of an embedding into a tropical toric variety.

Let $Y$ be a polyhedral space and let $\mathcal{Y}$ be its polyhedral structure.
Let $\mathbf{F}_p$ and $\mathbf{F}^p$ be the $p$-th \emph{multi-tangent} and $p$-th \emph{multi-cotangent spaces} defined as follows.
For $\sigma \in \mathcal{Y}$ of sedentary type $\eta \in \Delta$, 
\[\mathbf{F}_p(\sigma) \coloneq \sum_{ \substack{\delta \succeq \sigma \\ \operatorname{sed}(\delta) = \eta} } \bigwedge^p \langle \delta \rangle \subseteq \bigwedge^p O(\eta), \qquad \mathbf{F}^p(\sigma) \coloneq \Hom(\mathbf{F}_p(\sigma),\mathbb{R}).\]
For $\tau \prec \sigma \in \mathcal{Y}$, 
we have $i_{\tau \prec \sigma} : \mathbf{F}_p(\sigma) \to \mathbf{F}_p(\tau)$ 
and $i^*_{\tau \prec \sigma} : \mathbf{F}^p(\tau) \to \mathbf{F}^p(\sigma)$.
The map $i_{\tau \prec \sigma}$ is induced by 
$\pi_{\operatorname{sed}(\sigma)}^{\operatorname{sed}(\tau)} : O(\operatorname{sed}(\sigma)) \to  O(\operatorname{sed}(\tau))$
where we take it to be the identity map if $\operatorname{sed}(\tau) = \operatorname{sed}(\sigma)$.

For a fixed orientation of faces in $\mathcal{Y}$, let $\operatorname{sgn}(\tau,\sigma) \in \{-1,+1\}$
be the sign of $\tau \prec \sigma$ which is defined by wedging a normal vector going outward on the left (see \cref{lemwedge}).
Then the tropical homology complex $(C_{p,\bullet}(\mathcal{Y}),\partial)$  and the tropical cohomology complex $(C^{p,\bullet}(\mathcal{Y}),d)$
are defined by 
\[C_{p,q}(\mathcal{Y}) \coloneq \bigoplus_{\substack{\sigma \in \mathcal{Y}\\|\sigma|=q \\ \sigma \text{ compact }}} \mathbf{F}_p(\sigma), \qquad C^{p,q}(\mathcal{Y}) \coloneq \bigoplus_{\substack{\sigma \in \mathcal{Y}\\|\sigma|=q \\ \sigma \text{ compact }}} \mathbf{F}^p(\sigma)\]
where the differential $\partial : C_{p,\bullet} \to C_{p,\bullet -1}$ is given by summing $i_{\tau \prec \sigma}$ with appropriate signs, 
and similarly  $d : C^{p,\bullet} \to C^{p,\bullet +1}$ is given by summing $i^*_{\tau \prec \sigma}$ with appropriate signs as well.
The $(p,q)$-th \emph{tropical homology} of $Y$ denoted by $\operatorname{H}_{p,q}(Y)$ is the $q$-th homology of the complex, i.e.,  $\operatorname{H}_{p,q}(Y) = \mathcal{H}_q(C_{p,\bullet}(\mathcal{Y}))$.
Similarly, the $(p,q)$-th \emph{tropical cohomology} of $Y$ denoted by $\operatorname{H}^{p,q}(Y)$ is the $q$-th cohomology of the complex, i.e., $\operatorname{H}^{p,q}(Y) = \mathcal{H}^q(C^{p,\bullet}(\mathcal{Y}))$.
Tropical (co)homology does not depend on the choice of its polyhedral structure.
Note that the chain complex only considers compact faces. 
We may remove the condition of restriction on compact faces and consider all the faces
and define
the \emph{tropical Borel--Moore homology} denoted by $\operatorname{H}_{p,q}^{\mathrm{BM}}(Y)$ and 
the \emph{tropical cohomology with compact supports} denoted by $\operatorname{H}^{p,q}_{\mathrm{c}}(Y)$.

Let $Y$ be a polyhedral space of pure dimension $d$ with its polyhedral structure $\mathcal{Y}$.
Note that 
$C^{\mathrm{BM}}_{d,d}(\mathcal{Y}) =\bigoplus_{\eta \in \mathcal{Y}_d}\mathbf{F}_d(\eta)  =\bigoplus_{\eta \in \mathcal{Y}_d}\bigwedge^d \langle \eta \rangle $.
Let $\nu \in C^{\mathrm{BM}}_{d,d}(\mathcal{Y})$. Then for a choice of generators $\nu_\eta \in \bigwedge^d \langle \eta \rangle \simeq \mathbb{R}$, 
\[\nu = \sum_{\eta \in \mathcal{Y}_d}\omega(\eta) \nu_\eta\]
where $\omega : \mathcal{Y}_d \to \mathbb{R}$.

We define tropical varieties similarly to \cite{Amini2023} as follows.
\begin{definition}\label{deftropvar}
 Let $Y$ be an extended polyhedral space with polyhedral structure $\mathcal{Y}$ and let $\nu_Y = \sum_{\eta \in \mathcal{Y}_d}\omega(\eta) \nu_\eta \in C^{\mathrm{BM}}_{d,d}(\mathcal{Y})$.
 Then $(Y,\nu_Y)$ is a \emph{tropical variety} if $Y$ is of pure dimension $d$, connected, the element $\nu_Y \in \operatorname{H}_{d,d}^{\mathrm{BM}}(Y)$,
 and $\omega$ is nowhere vanishing, i.e., $\omega : \mathcal{Y}_d \to \mathbb{R} \setminus \{0\}$.
 For a tropical variety $(Y,\nu_Y)$, we say $\nu_Y$ is the \emph{fundamental class} of $Y$.
 We say $Y$ admits a \emph{tropical structure} if there exists such $\nu_Y$ for a fixed polyhedral structure.
\end{definition}
The definition of tropical variety does not depend on the choice of a polyhedral structure $\mathcal{Y}$. 
In particular, the existence of such $\nu_Y$ is independent of $\mathcal{Y}$.

Let $\Sigma$ be a tropical fan of dimension $d$ with the fundamental class defined by the weights of $\Sigma$. 
Note that $\operatorname{H}^{p,q}(|\Sigma|)$ is trivial for $q > 0$ since $\Sigma$ only has $\underline{0}_{\Sigma}$ as a compact face.
Moreover, $\operatorname{H}^{p,0}(|\Sigma|) = \mathbf{F}^p(\underline{0}_\Sigma)$. Similarly, $\operatorname{H}_{p,0}(|\Sigma|) = \mathbf{F}_p(\underline{0}_\Sigma)$.

Let $(Y,\nu_Y)$ be a tropical variety of dimension $d$.
The fundamental class $\nu_Y$ induces a \emph{cap product} 
\[- \frown \nu_Y :\operatorname{H}^{p,q}(Y) \to \operatorname{H}^{\mathrm{BM}}_{d-p,d-q}(Y).\]
See \cite{Jell2018,Gross2023} for more details.
We describe the cap product in the case of a tropical fan $\Sigma$. 
For this, we recall the notion of contraction in exterior algebra.
Let $\alpha \in \bigwedge^p N^\vee$ and $\nu \in \bigwedge^d N$.
Then the contraction map
$\kappa_\alpha : \bigwedge^d N \to \bigwedge^{d-p} N$ is characterised as follows.
Let $\langle -,- \rangle_p : \bigwedge^p N \times \bigwedge^p N^\vee \to \mathbb{R}$ be the non-degenerate pairing for a fixed $p$.
Then for all $\beta \in \bigwedge^{d-p} N^\vee$, the equality
$\langle  \kappa_\alpha(\nu), \beta \rangle_{d-p}  = \langle \nu, \alpha \wedge \beta \rangle_{d}$ holds.

Let $\nu = \sum_{\eta\in \Sigma_d} \omega(\eta)\nu_\eta$.
Then the contraction map induces the cap product 
\[\mathbf{F}^p(\underline{0}_\Sigma) \to \operatorname{H}^{\mathrm{BM}}_{d-p,d}(|\Sigma|) \]
by sending
$\alpha \mapsto \sum_{\eta \in \Sigma_d}\omega(\eta) \kappa_\alpha(\nu_\eta) \in \bigoplus_{\eta \in \Sigma_d} \mathbf{F}_{d-p}(\eta)$.
Dually, we get a map
\[\operatorname{H}_{\mathrm{c}}^{d-p,d}(|\Sigma|) \to \mathbf{F}_p(\underline{0}_\Sigma).\]
This map can be understood as a restriction of 
\[\bigoplus_{\eta \in \Sigma_d} \mathbf{F}^{d-p} (\eta) \to \mathbf{F}_p(\underline{0}_\Sigma)\]
by sending $\sum_\eta \alpha_\eta \mapsto (-1)^{p(d-p)} \sum_{\eta} w(\eta) \kappa_{\alpha_\eta}(\nu_\eta)$.
If the support $|\Sigma|$ is a support of a Bergman fan, 
these maps are isomorphisms. In fact, a more general statement is true for tropical varieties locally modelled by Bergman fans.

\begin{definition}
 We say that a tropical variety $Y$ is \emph{matroidal} at $x \in Y$ if there exists an open neighbourhood $U$ of $x$
 such that $U$ is isomorphic to an open set $V$ in $\mathbb{T}^a\times |\Sigma_M|$ for some matroid $M$ and $a\in \mathbb{N}$.
 A tropical variety $Y$ is \emph{locally matroidal} if $Y$ is matroidal at all $x \in Y$.
\end{definition}

\begin{proposition}[{\cite[Theorem 5.3]{Jell2018}}]\label{locmatpd}
  Let $Y$ be a locally matroidal tropical variety. Then 
  the Poincaré duality holds, i.e., 
  the cap product by the fundamental class 
  \[- \frown \nu_Y :\operatorname{H}^{p,q}(Y) \to \operatorname{H}^{\mathrm{BM}}_{d-p,d-q}(Y)\]
  is an isomorphism.
  Dually 
  \[\operatorname{H}^{p,q}_{\mathrm{c}}(Y) \to \operatorname{H}_{d-p,d-q}(Y)\]
  is an isomorphism.
\end{proposition}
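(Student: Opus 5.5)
The plan is to reduce the global statement to a local one by a sheaf-theoretic argument, and then check the local statement on the model spaces $\mathbb{T}^a\times|\Sigma_M|$. Following \cite{Jell2018,Gross2023}, I will realise tropical cohomology $\operatorname{H}^{p,q}(Y)$ as sheaf cohomology $\operatorname{H}^q(Y,\Omega^p_Y)$ of the sheaf of tropical $p$-forms, which are locally sections of the multi-cotangent spaces $\mathbf{F}^p$. I will realise Borel--Moore homology $\operatorname{H}^{\mathrm{BM}}_{d-p,d-q}(Y)$ as the hypercohomology of a complex of sheaves $\mathcal{C}^{\mathrm{BM}}_{d-p,\bullet}$ of locally finite tropical chains, with sections $U\mapsto C^{\mathrm{BM}}_{d-p,\bullet}(U)$. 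Cap product with $\nu_Y$ is compatible with restriction to open sets, so it defines a morphism of complexes of sheaves
\[
\Omega^p_Y \longrightarrow \mathcal{C}^{\mathrm{BM}}_{d-p,d-\bullet}.
\]
It then suffices to show that this morphism is a quasi-isomorphism. Equivalently, for every $x\in Y$ and a basis of small open neighbourhoods $U$ of $x$, the local cap product $\operatorname{H}^{p,0}(U)\to \operatorname{H}^{\mathrm{BM}}_{d-p,\bullet}(U)$ should be an isomorphism onto homology concentrated in degree $d$. Since the sheaves of chains are soft (or $c$-soft) on a paracompact space, the global statement then follows. Alternatively, one can run a Mayer--Vietoris induction over a finite cover by charts, using compactness or a finite polyhedral structure together with the five lemma.

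For the local step, by the locally matroidal hypothesis I may take $U$ to be a small star neighbourhood of $x$ isomorphic to an open subset of $\mathbb{T}^a\times|\Sigma_M|$ containing the point $(\infty,\dots,\infty,0)$ (after translation), with fundamental class the product of the fundamental classes. The cohomology side is easy: $\operatorname{H}^{p,0}(U)=\mathbf{F}^p$ of the minimal cell, and $\operatorname{H}^{p,q}(U)=0$ for $q>0$, since $U$ is a cone-like neighbourhood. On the Borel--Moore side I will use a Künneth formula for the product with $\mathbb{T}^a$, and reduce to two building blocks.

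The first building block is $\mathbb{T}$ at $\infty$. Here one computes by hand that $\operatorname{H}^{\mathrm{BM}}_{\bullet,\bullet}$ of a half-line $(t,\infty]$ is $\mathbb{R}$ in bidegree $(0,0)$ only. This happens because $\mathbf{F}_1$ vanishes at the sedentary point, so the cap product $\mathbf{F}^0\to \operatorname{H}^{\mathrm{BM}}_{1,1}$ is compared after the shift $d\mapsto d$ with $\mathbf{F}^1=0$ at infinity. The key point is that the unbounded edge towards $\infty$ carries $\mathbf{F}_1=\mathbb{R}$ but its infinite endpoint kills it.

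The second building block is $|\Sigma_M|$ at the origin. There I need that $\operatorname{H}^{\mathrm{BM}}_{d-p,q}(|\Sigma_M|)$ vanishes for $q\neq d$, and that $\kappa_{(\cdot)}(\nu):\mathbf{F}^p(\underline{0})\to \operatorname{H}^{\mathrm{BM}}_{d-p,d}$ is an isomorphism. Using the order complex structure of $\Sigma_M$, the Borel--Moore complex is identified with the (co)chain complex of the lattice of flats with coefficients in the $\mathbf{F}_{d-p}$. Its acyclicity outside top degree follows from the shellability and Cohen--Macaulayness of the order complex of $L(M)$. The top degree is identified with the Orlik--Solomon algebra in degree $p$, whose dual is $\mathbf{F}^p(\underline{0})$ by Zharkov's theorem. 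One then checks that the cap product realises exactly this identification.

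The main obstacle is this last matroidal computation, namely the vanishing of $\operatorname{H}^{\mathrm{BM}}_{d-p,q}(|\Sigma_M|)$ for $q<d$ together with the compatibility of the cap product with the Orlik--Solomon description. I would attack it by induction on $|E|$ via deletion--contraction, that is, tropical modification. Write $|\Sigma_M|$ as a modification of $|\Sigma_{M\setminus e}|$ along the divisor $|\Sigma_{M/e}|$, and get a long exact sequence in Borel--Moore homology. Both sides of the cap product fit into compatible exact sequences, and the five lemma closes the induction, starting from the base case $\mathbb{R}^k$, where the statement is linear algebra ($\kappa$ is the Hodge-star type isomorphism $\bigwedge^pN^\vee\cong\bigwedge^{k-p}N$). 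The dual statement for $\operatorname{H}_{\mathrm{c}}$ versus $\operatorname{H}$ follows by dualising, since all groups are finite-dimensional over $\mathbb{R}$.
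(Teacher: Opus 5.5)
The paper gives no proof of this proposition; it only quotes \cite[Theorem 5.3]{Jell2018}. Your architecture is essentially how that result is proved: sheafify the cap product, reduce to a stalkwise statement, handle the local models $\mathbb{T}^a\times|\Sigma_M|$ by a product argument, and handle Bergman fans by induction along tropical modifications. So the plan is sound. The problems are in the local computations you actually write down.

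The building block $\mathbb{T}$ is miscomputed. For $U=(t,\infty]$ the Borel--Moore homology is $\mathbb{R}$ in bidegree $(1,1)$ only, not $(0,0)$. Take the sedentary vertex $\infty$ and the edge $\gamma$ ending at it; the open end at $t$ contributes nothing. Since $\mathbf{F}_1(\infty)=\bigwedge^1 O(\rho^+)=0$, the $\mathbf{F}_1$-chain on $\gamma$ is a cycle; it is the fundamental class and spans $\operatorname{H}^{\mathrm{BM}}_{1,1}(U)$. Meanwhile $\mathbf{F}_0(\gamma)\to\mathbf{F}_0(\infty)$ is the identity, so the $\mathbf{F}_0$-chain is not a cycle and $\operatorname{H}^{\mathrm{BM}}_{0,\bullet}(U)=0$. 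Thus the sedentary endpoint kills the $\mathbf{F}_0$-class, matching $\operatorname{H}^{1,0}(U)=\mathbf{F}^1(\infty)=0$, while the $\mathbf{F}_1$ on the edge survives. With your answer, $\operatorname{H}^{0,0}(U)=\mathbb{R}\to\operatorname{H}^{\mathrm{BM}}_{1,1}(U)$ would be a map to zero, so the building block as stated contradicts the proposition. What you computed ($\mathbb{R}$ in bidegree $(0,0)$) is the ordinary homology of $U$.

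For $|\Sigma_M|$, Cohen--Macaulayness of the order complex of $L(M)$ gives vanishing of $\operatorname{H}^{\mathrm{BM}}_{d-p,q}(|\Sigma_M|)$ for $q<d$ only with constant coefficients, i.e.\ for $\mathbf{F}_0$ ($p=d$). For $d-p>0$ the cosheaf $\mathbf{F}_{d-p}$ varies from cone to cone, and that vanishing is exactly the non-trivial content of local duality. (Also, Zharkov identifies $\mathbf{F}^p(\underline{0})$ with the degree-$p$ part of the projective Orlik--Solomon algebra itself, not with its dual.) So the modification induction carries all the weight, and as written it is only gestured at. To make it a proof you need the following steps.
\begin{enumerate}
\item Treat the degenerate cases separately. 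If $e$ is a coloop, $|\Sigma_M|\cong|\Sigma_{M\setminus e}|\times\mathbb{R}$ and you use Künneth again. If $e$ is parallel to another element, $|\Sigma_M|$ is the graph of a linear function on $|\Sigma_{M\setminus e}|$.
\item Otherwise, decompose $|\Sigma_M|$ into the open hanging part over $|\Sigma_{M/e}|$ and the closed graph over $|\Sigma_{M\setminus e}|$. On the graph, the restriction of $\mathbf{F}_\bullet$ of $|\Sigma_M|$ is not $\mathbf{F}_\bullet$ of $|\Sigma_{M\setminus e}|$: along the divisor it is an extension of the latter by the vertical direction wedged with $\mathbf{F}_{\bullet-1}$ of $|\Sigma_{M/e}|$.
\item Match this on the cohomology side by the deletion--restriction sequence $0\to\mathbf{F}^p(\underline{0}_{\Sigma_{M\setminus e}})\to\mathbf{F}^p(\underline{0}_{\Sigma_M})\to\mathbf{F}^{p-1}(\underline{0}_{\Sigma_{M/e}})\to 0$.
\item Check that the cap products commute with all these maps, including connecting homomorphisms, before invoking the five lemma.
\end{enumerate}
Finally, moving an arbitrary point to the apex of a model requires that stars of faces in $\mathbb{T}^a\times|\Sigma_M|$ are again of the form $\mathbb{T}^{a'}\times|\Sigma_{M'}|$. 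This holds because stars in Bergman fans are, up to linear isomorphism, Bergman fans of direct sums of minors; a translation alone does not achieve it.
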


\begin{remark}
Locally matroidal tropical varieties can be considered as a tropical analogue of smooth varieties.
In particular, this condition is often referred to as \emph{smooth} in the literature (see, for instance, \cite{Itenberg2019, Gross2023}).
While ``smooth'' is frequently used in this sense, other notions of tropical smoothness also appear, such as quasilinear and homological smoothness; see \cite{Amini2024b}.
For instance, \cite{Amini2023} emphasises that quasilinear fans form a particularly robust class 
(e.g., they are closed under certain operations and satisfy the K\"ahler package), 
and this suggests that broader notions of tropical smoothness may be desirable.
\end{remark}

Moreover, a \emph{cup product} can be defined which gives a bigraded algebra structure to $\operatorname{H}^{\bullet,\bullet}(Y)$ (see \cite{Gross2023}).
In particular, 
\[\operatorname{H}^\bullet(Y) \coloneq \bigoplus_{k\geq 0} \bigoplus_{k=p+q}\operatorname{H}^{p,q}(Y)\] 
is a graded algebra.

\section{Combinatorial identity for Whitney numbers and independence complexes}\label{combwhit}
In this section we establish a combinatorial identity for the Whitney numbers of
the second kind of a matroid $M$. This identity will be used in the proof of
\cref{rankspprop} to compute the Euler characteristic of the ${}_pE_1$-page of the rank
spectral sequence associated to the tropical matroid Schubert variety $Y_M$.
The argument is purely combinatorial and may be of independent interest, 
so we present it separately from other sections.

Let $M$ be a matroid on $E$ with the set of independent sets $\mathcal{I}(M)$,
the rank function $r_M$, and the lattice of flats $L(M)$. Let $W_p(M)$ be the $p$-th \emph{Whitney number of the second kind} 
which is defined by
\[W_p(M)\coloneq \#\{F \in L(M) \,\vert \, r_M(F)=p\}. \]

\begin{definition}\label{defadmpair}
  Let $M$ be a matroid on $E$. 
  \begin{itemize}
    \item A pair $(I,F)$ is \emph{admissible} if $I$ is an independent set and $F$ is a flat such that $I \subseteq F$.
    \item For an admissible pair $(I,F)$, the matroid $M(I,F)$ is a matroid minor of $M$ on $F \setminus I$ obtained by 
   first taking the localisation at $F$ and then taking the contraction by $I$ i.e., $M(I,F) = (M^F)_I$.
    \item The \emph{rank} of $(I,F)$ denoted by $\operatorname{rk}(I,F)$ is the rank of $M(I,F)$ as a matroid, i.e., 
$\operatorname{rk}(I,F) =  r_M(F) - |I|$.
  \end{itemize}
\end{definition}

\begin{remark}
  We warn the reader not to confuse \emph{compatible pairs} in \cref{def:compair} with \emph{admissible pairs}.
  The relation between them is explained in the proof of \cref{rankstrat} and in \cref{rmkadm}.
\end{remark}

For a matroid $M$ of rank $d$ and an integer $0 \leq k \leq d$, let $f_M^k$ be the number of independent sets
of cardinality $k$ in $M$. Note that $f_M = (f_M^0, \dots , f_M^d)$ is 
the $f$-vector of the independence complex of $M$.

Now fix an integer $p \geq 0$. Define
\[n_p^i(M) \coloneq \sum_{\substack{(I,F) \text{ admissible}\\ \operatorname{rk}(I,F) = p+i}}f_{M(I,F)}^i\]
and 
\[N_p(M) \coloneq \sum_{i \geq 0}(-1)^i n_p^i(M).\]

\begin{proposition}\label{whitprop}
 Let $M$ be a matroid and let $N_p(M)$ be as above. Then
 \[N_p(M)  = W_p(M).\] 
\end{proposition}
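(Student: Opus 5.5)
The plan is to expand $n_p^i(M)$ as a count of triples and then cancel terms by a sign-reversing sum over subsets. First, unwind the definition of $f^i_{M(I,F)}$. For an admissible pair $(I,F)$, the independent sets of $M(I,F)=(M^F)_I$ are exactly the sets $J\subseteq F\setminus I$ such that $I\cup J$ is independent in $M^F$, equivalently in $M$. This is the standard description of independent sets in a contraction by an independent set. Hence $f^i_{M(I,F)}$ counts such $J$ with $|J|=i$. Consequently $n_p^i(M)$ counts triples $(F,I,J)$ where $F\in L(M)$, $I\cap J=\emptyset$, $K\coloneq I\sqcup J$ is independent with $K\subseteq F$, $|J|=i$, and
\[
r_M(F)=p+i+|I| = p+|K|.
\]

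Second, reorganise $N_p(M)=\sum_i(-1)^i n_p^i(M)$ by first choosing the flat $F$ and the independent set $K\subseteq F$ with $|K|=r_M(F)-p$, and then choosing the splitting $K=I\sqcup J$. The key observation is that every subset $J\subseteq K$ gives a valid triple. Indeed, $I=K\setminus J$ is independent and contained in $F$, so $(I,F)$ is admissible. Moreover, the rank condition depends only on $|K|$, not on the splitting. Therefore
\[
N_p(M)=\sum_{F\in L(M)}\ \sum_{\substack{K\in\mathcal I(M),\ K\subseteq F\\ |K|=r_M(F)-p}}\ \sum_{J\subseteq K}(-1)^{|J|}.
\]

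Third, the inner sum $\sum_{J\subseteq K}(-1)^{|J|}=(1-1)^{|K|}$ vanishes unless $K=\emptyset$. When $K=\emptyset$, the constraint $|K|=r_M(F)-p$ forces $r_M(F)=p$, and that flat contributes exactly $1$. Hence $N_p(M)=\#\{F\in L(M)\mid r_M(F)=p\}=W_p(M)$.

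The only point requiring care is the first step, namely the bijection between independent sets of $M(I,F)$ and sets $J$ with $I\sqcup J$ independent in $F$. This follows from $r_{(M^F)_I}(J)=r_M(I\cup J)-|I|$ for $J\subseteq F\setminus I$. The same description also covers matroids with loops, since loops never lie in an independent set, so no separate case is needed. Once this is in place, the remaining steps are bookkeeping.
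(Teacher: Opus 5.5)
Your proposal is correct and is essentially the paper's proof. Both arguments rewrite $f^i_{M(I,F)}$ as a count of sets $S\subseteq F\setminus I$ with $I\sqcup S$ independent, then regroup by the union $K=I\sqcup S$ (which forces $|K|=r_M(F)-p$), and finally use $(1-1)^{|K|}=0$ to cancel every term with $K\neq\varnothing$; your justification via $r_{(M^F)_I}(J)=r_M(I\cup J)-|I|$ spells out a step the paper states without comment.
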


\begin{proof}
  First note that \[f_{M(I,F)}^i = \#\{S \subseteq F \setminus I \,\vert \, I\sqcup S \in {\mathcal{I}(M)}, |S|=i \}.\]
  This gives
  \begin{align*}
    N_p(M) &=\sum_{i \geq 0}(-1)^i \sum_{\substack{(I,F) \text{ admissible}\\ \operatorname{rk}(I,F) = p+i}}\#\{S \subseteq F \setminus I \,\vert \, I\sqcup S \in {\mathcal{I}(M)}, |S|=i \}\\
            &=\sum_{\substack{(I,F) \text{ admissible},\, S \subseteq  F\setminus I \\ I \sqcup S \in \mathcal{I}(M), \,\operatorname{rk}(I,F) = p+|S|}} (-1)^{|S|}.
  \end{align*}
  Now let $J \coloneq I \sqcup S$. Then we can replace the condition of $S$ in terms of $J$ by $I \subseteq  J \subseteq F$ and $|J| = r_M(F)-p$.
  Hence we get
  \[N_p(M) = \sum_{F\in L(M)} \; \sum_{\substack{(J,F) \text{ admissible}\\ \operatorname{rk}(J,F) = p}}\;\sum_{I \subseteq J}(-1)^{|J|-|I|}.\]
  For fixed $J$, 
  \begin{align*}
    (1-1)^{|J|}=\sum_{I \subseteq J}(-1)^{|J|-|I|} = \begin{cases}
      1 &\text{ if $|J|=0$}\\
      0 & \text{ else}.
    \end{cases}
  \end{align*}
  Since $|J|=r_M(F)-p$, $|J|=0$ if and only if $F$ is of rank $p$. Hence we conclude that
  \[N_p(M) = \sum_{\substack{F\in L(M)\\r_M(F)=p}}1 = W_p(M)\]
  which completes the proof.
\end{proof}

\section{Geometry and stratification of $Y_M$}\label{geomofYM}
In this section, we describe the geometry of $Y_M$ by giving its polyhedral structure. 
In particular, we show that $Y_M$ admits a stratification by augmented Bergman fans
labelled by admissible pairs. 
This leads us to define the rank filtration of $Y_M$ in \cref{secrankfilt}
which is the core tool to compute the cohomology of $Y_M$. 

Before we describe the geometry of $Y_M$, we briefly discuss the induced polyhedral structure of compactifications.

\subsection{Induced polyhedral structure}
Let $Y$ be a compactification of $U$ by $\Delta$.

\begin{definition}\label{compfan}
Let $\Delta$ be a simplicial fan in $\mathbb{R}^n$, and let $\Sigma$ be a fan 
with the support $U\subseteq |\Delta|$. For a cone $\eta\in\Delta$, define the restriction
\[
\Sigma|_\eta \coloneq \{\sigma\in \Sigma \,\vert\, \sigma \subseteq \eta\}.
\]
We say $\Sigma$ is \emph{$\Delta$-compatible} if 
the support $|\Sigma|_\eta| = U\cap\eta$ for all $\eta\in\Delta$. 
\end{definition}

Let $\pi^\zeta : O(\underline{0}) \twoheadrightarrow O(\zeta)$ 
be the projection of torus orbits for 
$\zeta \in \Delta$.
For a $\Delta$-compatible fan $\Sigma$, we can induce a polyhedral structure on $Y$ as follows.

\begin{definition}\label{indpolcom}
  Let $\Sigma$ be a $\Delta$-compatible fan. The \emph{induced polyhedral structure} $\mathcal{Y}$ of $Y$ by $\Sigma$ is the collection of faces
  \[
  \mathcal{Y}=\{\pi^\zeta(\sigma)\mid \operatorname{relint}(\zeta)\cap \sigma\neq\varnothing,\ \sigma\in\Sigma,\ \zeta\in\Delta\}.
  \]
\end{definition}

\begin{remark}
 It is not \emph{a priori} clear if the above definition indeed gives a polyhedral structure on $Y$. 
 Since the required proof is a technical point-set topological argument, 
 we show it in \cref{toriccomp}.
\end{remark}

\begin{lemma}
  Let $M$ be a matroid on $E$.
  Then the fan $\Sigma^+_M$ is $(\Pi^1)^E$-compatible.
\end{lemma}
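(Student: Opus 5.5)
The plan is to verify the defining condition of \cref{compfan} directly from the explicit description of the cones of $\Sigma^+_M$. The argument is elementary convex geometry, and the only real input is the compatibility condition $I \subseteq F$ in \cref{def:compair}.

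First I fix the fan of $(\mathbb{TP}^1)^E$, which I denote $(\Pi^1)^E$. It is the complete simplicial fan in $\mathbb{R}^E$ whose cones are the closed orthant faces
\[\eta_{A,B} \coloneq \operatorname{cone}(e_i)_{i\in A} + \operatorname{cone}(-e_j)_{j\in B},\]
for disjoint subsets $A,B\subseteq E$. Equivalently, $\eta_{A,B}$ is cut out by $x_i\ge 0$ for $i\in A$, $x_j\le 0$ for $j\in B$, and $x_k=0$ for $k\notin A\sqcup B$. Two such cones intersect in $\eta_{A\cap A',B\cap B'}$, which is a common face of both.

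The key step is to show that every cone of $\Sigma^+_M$ lies in a single orthant. Take a compatible pair $(I,\mathcal{F})$ with flag $F_1\subsetneq\cdots\subsetneq F_k$; if $\mathcal F=\varnothing$, take $F_1=E$. Since $I\subseteq F_1\subseteq F_j$ for all $j$, the generators $e_i$ ($i\in I$) and $-e_{E\setminus F_j}$ all lie in $\eta_{I,\,E\setminus F_1}$. The index sets are disjoint precisely because $I \subseteq F_1$, so $\sigma_{I,\mathcal F}\subseteq \eta_\sigma\coloneq\eta_{I,E\setminus F_1}$. (For matroids with loops, apply this to $M_L$ and push forward along $\mathbb{R}^{E\setminus L}\hookrightarrow\mathbb{R}^E$; the image of an orthant face is still an orthant face.)

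Next, fix $\eta=\eta_{A,B}\in(\Pi^1)^E$ and a cone $\sigma\in\Sigma^+_M$. I claim $\sigma\cap\eta$ is a face of $\sigma$. Indeed $\sigma\cap\eta=\sigma\cap(\eta_\sigma\cap\eta)$, and $\eta_\sigma\cap\eta$ is a face of $\eta_\sigma$. Hence there is a linear functional $\ell=\sum_{k\in S}\epsilon_k x_k$, with signs $\epsilon_k\in\{\pm1\}$ chosen so that $\ell\ge0$ on $\eta_\sigma$, such that $\eta_\sigma\cap\eta=\eta_\sigma\cap\{\ell=0\}$. Since $\ell\ge 0$ on $\sigma\subseteq\eta_\sigma$, the set $\sigma\cap\{\ell=0\}$ is a face of $\sigma$, hence a cone of $\Sigma^+_M$ contained in $\eta$.

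Therefore
\[U_M\cap\eta=\bigcup_{\sigma\in\Sigma^+_M}(\sigma\cap\eta)\subseteq \bigl|\Sigma^+_M|_\eta\bigr|.\]
The reverse inclusion is immediate from the definition of $\Sigma^+_M|_\eta$, which gives equality.

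I do not expect a genuine obstacle. The one point needing care is that the orthant containing $\sigma_{I,\mathcal F}$ is well defined, which is exactly where the compatibility $I\subseteq F$ is used.
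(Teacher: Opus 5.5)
Your proof is correct, but it takes a different and more self-contained route than the paper.

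The paper's proof is a single citation. By \cite[Proposition 3.6 (b)]{Eur2023}, $\Sigma^+_E$ is obtained from $(\Pi^1)^E$ by stellar subdivisions. Hence $\Sigma^+_M\subseteq\Sigma^+_E$ is a subfan of a refinement of $(\Pi^1)^E$, and compatibility is asserted to follow.

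You instead check directly the only property that is actually needed: each cone of $\Sigma^+_M$ lies in a single cone of $(\Pi^1)^E$. You get this from the one-line observation $\sigma_{I,\mathcal F}\subseteq\eta_{I,E\setminus F_1}$, which is exactly where $I\subseteq F_1$ is used. You then supply the supporting-functional argument showing that $\sigma\cap\eta$ is a face of $\sigma$. The paper leaves that step implicit in ``this shows''.

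In effect you prove a general principle: any fan whose cones each lie in some cone of $\Delta$ is $\Delta$-compatible. This makes your argument independent of the stellahedral geometry.

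The paper's citation gives something extra: $\Sigma^+_E$ genuinely subdivides $(\Pi^1)^E$ by stellar subdivisions. That is not needed for this lemma, but it is reused in \cref{pullbackM} to obtain the map $\pi'\colon X_E\to Y_E$ and the injectivity of $\pi'^*$.
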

\begin{proof}
  By \cite[Proposition 3.6 (b)]{Eur2023}, 
  the fan $\Sigma^+_M$ is a subfan of a common refinement of $\Sigma^+_{E}$ and $(\Pi^1)^E$.
  This shows that $\Sigma^+_M$ is $(\Pi^1)^E$-compatible.
\end{proof}

\subsection{Polyhedral structure of $Y_M$}
First we recall the definition of $Y_M$. Let $M$ be a matroid, $E = \{1,\dots,n\}$ be the ground set of $M$.
Let $U_M$ be the support of $\Sigma^+_M$ in $\mathbb{R}^E$. This leads to an inclusion $U_M \hookrightarrow \mathbb{R}^E$.

Denote the fan structure of the tropical projective line $\mathbb{TP}^1$ by $\Pi^1 = \{\underline{0}, \rho^+,\rho^-\}$ 
where we write $\rho^\pm$ for the positive and the negative ray in $\mathbb{R}^1$. 
Then $(\Pi^1)^E$ is the fan of the tropical toric variety $(\mathbb{TP}^1)^E$.
There is an inclusion $\mathbb{R}^E \hookrightarrow (\mathbb{TP}^1)^E$ as well.

\begin{definition}
 The \emph{tropical matroid Schubert variety} $Y_M$ of a matroid $M$ is the topological closure of $U_M$ in $(\mathbb{TP}^1)^E$ under the inclusion
 \[U_M \hookrightarrow \mathbb{R}^E \hookrightarrow (\mathbb{TP}^1)^E.\] 
\end{definition}

\begin{remark}
  The construction of $Y_M$ is intrinsic to the matroid $M$. 
  Indeed, if $M$ on $E$
  and $M'$ on $E'$ are isomorphic as matroids, then any bijection
  $\varphi \colon E \to E'$ inducing an isomorphism $M \simeq M'$ canonically
  induces an isomorphism
  $Y_M \xrightarrow{\sim} Y_{M'}$
  via the corresponding coordinate permutation
  $(\mathbb{TP}^1)^E \xrightarrow{\sim} (\mathbb{TP}^1)^{E'}$. 
\end{remark}

$Y_M$ is compact since $(\mathbb{TP}^1)^E$ is compact.
However, this closure description does not clarify its geometry.
Hence, we need to explicitly understand its polyhedral structure.

We first describe the torus orbit stratification of $(\mathbb{TP}^1)^E$.
Note that a cone $\eta_{J,K} \in (\Pi^1)^E$ is of the form
\[\eta_{J,K} = \operatorname{cone}(e_j)_{j\in J} + \operatorname{cone}(-e_k)_{k\in K}\]
where $J\cap K = \varnothing$ and $J,K \subseteq E$. In particular, the rays of $(\Pi^1)^E$
are of the form 
\begin{align*}
  \rho^+_j \coloneq \eta_{\{j\},\varnothing}\;, \quad
  \rho^-_k \coloneq \eta_{\varnothing,\{k\}}
\end{align*} 
where $1 \leq j,k\leq n$.
Then the torus orbits stratify $(\mathbb{TP}^1)^E$ giving 
\[(\mathbb{TP}^1)^E = \coprod_{\eta_{J,K} \in (\Pi^1)^E} O(\eta_{J,K}).\]
For simplicity, let $O(J,K) \coloneq O(\eta_{J,K}) $. Let $\pi_{J,K}$ be the projection of strata
\[\pi_{J,K}: O(\underline{0})=\mathbb{R}^E\to O(J,K) \subseteq (\mathbb{TP}^1)^E.\]

Now give the induced polyhedral structure of $Y_M$ by $\Sigma^+_M$ as in \cref{indpolcom}.
Recall that a cone $\sigma_{I,\mathcal{F}} \in \Sigma^+_M$ is of the form 
\[\sigma_{I,\mathcal{F}} = \operatorname{cone}(e_i)_{i\in I} + \operatorname{cone}(-e_{E\setminus F})_{F \in \mathcal{F}}\]
where $(I,\mathcal{F})$ is a compatible pair discussed in \cref{matroids}.
We first classify when 
\[\sigma_{I,\mathcal{F}} \cap \operatorname{relint}(\eta_{J,K}) \neq \varnothing.\]

\begin{lemma} Let $\sigma_{I,\mathcal{F}} \in \Sigma^+_M$ and $\eta_{J,K} \in (\Pi^1)^E$.
  Then
  \[\sigma_{I,\mathcal{F}} \cap \operatorname{relint}(\eta_{J,K}) \neq \varnothing\]
  if and only if
  \[ J \subseteq I \text{ and }  E\setminus F = K \text{ for some } F\in \mathcal{F}\]
\end{lemma}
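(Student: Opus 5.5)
The plan is to compute directly the sign pattern of an arbitrary point of $\sigma_{I,\mathcal{F}}$ and compare it with the sign pattern that characterises $\operatorname{relint}(\eta_{J,K})$. First I note that $\operatorname{relint}(\eta_{J,K})$ is exactly the set of $x\in\mathbb{R}^E$ with $x_j>0$ for $j\in J$, $x_k<0$ for $k\in K$, and $x_e=0$ for $e\notin J\sqcup K$. So the intersection is nonempty if and only if some $x\in\sigma_{I,\mathcal{F}}$ has positive support exactly $J$ and negative support exactly $K$.

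\textbf{Convention for $K=\varnothing$.} The flags in \cref{def:compair} consist of proper flats, so the case $K=\varnothing$ cannot be matched by a proper $F\in\mathcal{F}$. I will read the statement with the convention that $F=E$ is always allowed in ``for some $F\in\mathcal{F}$''. This is harmless, since $-e_{E\setminus E}=0$ adds nothing to the cone, and with it $K=\varnothing$ corresponds to taking $F=E$.

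\textbf{Sign pattern of a point.} Write $\mathcal{F}=\{F_1\subsetneq\cdots\subsetneq F_k\}$ and take a general point
\[x=\sum_{i\in I}a_ie_i-\sum_{l=1}^k b_l\, e_{E\setminus F_l},\qquad a_i,b_l\ge 0 .\]
Compatibility gives $I\subseteq F_l$ for every $l$, so for $i\in I$ no term $e_{E\setminus F_l}$ involves $i$, and hence $x_i=a_i\ge 0$. For $e\notin I$ we get $x_e=-\sum_{l:\,e\notin F_l}b_l\le 0$. Let $m$ be the largest index with $b_m>0$, and set $F_m=E$ if all $b_l=0$. Because the flats are nested, $e\notin F_l$ for some $l$ with $b_l>0$ holds if and only if $e\notin F_m$. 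So the negative support of $x$ is exactly $E\setminus F_m$, and its positive support is $\{i\in I: a_i>0\}\subseteq I$. Note that $I\subseteq F_m$ makes these two supports automatically disjoint.

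\textbf{The two directions.} If $x\in\operatorname{relint}(\eta_{J,K})$, the computation above gives $J\subseteq I$ and $K=E\setminus F_m$ with $F_m\in\mathcal{F}\cup\{E\}$. Conversely, suppose $J\subseteq I$ and $K=E\setminus F$ for some $F\in\mathcal{F}\cup\{E\}$. Take $a_i=1$ for $i\in J$ and $a_i=0$ for $i\in I\setminus J$. Take $b_F=1$ and all other $b$'s zero (all $b$'s zero if $F=E$). The resulting point lies in $\sigma_{I,\mathcal{F}}$, and by the computation it has positive support $J$ and negative support $K$, so it lies in $\operatorname{relint}(\eta_{J,K})$.

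The only delicate point is the bookkeeping: the nestedness of the flag is what makes the negative support a single complement $E\setminus F_m$ rather than an arbitrary union of complements, and the case $K=\varnothing$ must be handled through the convention above. Beyond that the argument is a routine computation.
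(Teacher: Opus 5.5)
Your approach is essentially the paper's. The ``if'' direction uses the same witness $e_J-e_K$. For ``only if'', coordinates outside $I$ are non-positive, which forces $J\subseteq I$. Since $I\subseteq F$ for all $F\in\mathcal{F}$, the negative part of a point of $\sigma_{I,\mathcal{F}}$ is a non-negative combination of the vectors $e_{E\setminus F}$, and nestedness makes its support a single complement. Your convention allowing $F=E$ when $K=\varnothing$ is the one the paper adopts in \cref{rmkadm}. Your support computation also spells out the step the paper compresses into ``This means there exists $F\in\mathcal{F}$ such that $E\setminus F=K$''.

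There is, however, one false step. With $F_1\subsetneq\cdots\subsetneq F_k$, the complements $E\setminus F_l$ \emph{decrease} in $l$. So the negative support $\bigcup_{l:\,b_l>0}(E\setminus F_l)$ equals $E\setminus F_m$ for $m$ the \emph{smallest} index with $b_m>0$, not the largest. As written, your equivalence fails: take $k=2$, $b_1=b_2=1$ and $e\in F_2\setminus F_1$. Then $x_e=-b_1<0$, although $e\in F_2=F_m$. Replacing ``largest'' by ``smallest'' repairs this. Everything else then goes through unchanged: the supports are disjoint because $I\subseteq F_m$, and in the ``if'' direction only one $b$ is non-zero, so the issue does not arise there.
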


\begin{proof}
  We show the ``if'' direction first. Consider $e_J-e_K$. This is indeed in the relative interior of $\eta_{J,K}$.
  We claim that $e_J-e_K \in \sigma_{I,\mathcal{F}}$. Since $J \subseteq I$, 
  we have $e_J \in \sigma_{I,\mathcal{F}}$ and $-e_K \in \sigma_{I,\mathcal{F}}$ 
  because there exists $F \in \mathcal{F}$ such that $E\setminus F = K$.
  The cone $\sigma_{I,\mathcal{F}}$ is closed under addition and hence we conclude that $e_J-e_K \in \sigma_{I,\mathcal{F}}$.

  For the ``only if'' direction, again $J \subseteq I$ must hold, since otherwise there exists 
  $j \in J\setminus I$ such that 
  all the points in $\operatorname{relint}(\eta_{J,K})$ have positive $j$-th coordinate 
  contrary to the points of $\sigma_{I,\mathcal{F}}$ which have non-positive $j$-th coordinate.
  Now suppose 
  \[\sum_j\lambda_j e_j- \sum_k\mu_ke_k \in \sigma_{I,\mathcal{F}} \cap \operatorname{relint}(\eta_{J,K})\]
  where $\lambda_j, \mu_k > 0$. Then since 
  \[\sum_j\lambda_j e_j-\sum_k\mu_ke_k \in \sigma_{I,\mathcal{F}},\]
  we have
  \[\sum_k \mu_k e_k \in \operatorname{cone}(e_{E\setminus F})_{F\in \mathcal{F}}.\]
  This means there exists $F\in \mathcal{F}$ such that $E\setminus F = K$.
\end{proof}

For a fixed matroid $M$, 
we consider the orbit $O(J,K)$ where $J$ is an independent set and $K=E\setminus F$ for an admissible pair $(J,F)$.

\begin{proposition}\label{rankstrat}
  Let $J \in \mathcal{I}(M)$ and $K = E\setminus F$ be as above. Then $O(J,K) \cap Y_M$ is the support of $\Sigma^+_{M(J,F)}$ defined in $O(J,K)$.
  Moreover, the induced polyhedral structure of $O(J,K) \cap Y_M$ by $\Sigma^+_M$ 
  coincides with $\Sigma^+_{M(J,F)}$.
\end{proposition}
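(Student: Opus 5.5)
The plan is to compute the induced polyhedral structure of \cref{indpolcom} on the stratum $O(J,K)$ explicitly, cone by cone, and then match the resulting cones with the cones of $\Sigma^+_{M(J,F)}$. By \cref{indpolcom} together with the fact that this gives a polyhedral structure on $Y_M$ (\cref{toriccomp}), the faces of $Y_M$ lying in $O(J,K)$ are exactly the images $\pi_{J,K}(\sigma_{I,\mathcal{F}})$ with $\sigma_{I,\mathcal{F}}\cap\operatorname{relint}(\eta_{J,K})\neq\varnothing$. By the preceding lemma, these are precisely the compatible pairs $(I,\mathcal{F})$ with $J\subseteq I$ and $F\in\mathcal{F}$. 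Hence $O(J,K)\cap Y_M$ is the union of these images. It therefore suffices to show that this collection of images is exactly the set of cones of $\Sigma^+_{M(J,F)}$, realised in $O(J,K)\cong\mathbb{R}^{F\setminus J}$.

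First I compute the image of a generator. The projection $\pi_{J,K}$ sends a vector to its coordinates indexed by $E\setminus(J\cup K)=F\setminus J$. Since $\mathcal{F}$ is a flag containing $F$, every $G\in\mathcal{F}$ satisfies $G\subseteq F$ or $G\supseteq F$.
\begin{itemize}
\item If $G\supseteq F$, then $E\setminus G\subseteq K$, so $\pi_{J,K}(-e_{E\setminus G})=0$.
\item If $G\subsetneq F$, then $J\subseteq I\subseteq G$ by compatibility, so $\pi_{J,K}(-e_{E\setminus G})=-e_{F\setminus G}$.
\item The generators $e_i$ with $i\in J$ vanish, and those with $i\in I\setminus J$ survive.
\end{itemize}
Therefore
\[
\pi_{J,K}(\sigma_{I,\mathcal{F}})=\operatorname{cone}(e_i)_{i\in I\setminus J}+\operatorname{cone}(-e_{(F\setminus J)\setminus(G\setminus J)})_{G\in\mathcal{F},\,G\subsetneq F}.
\]

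Next I match this with the combinatorics of $M(J,F)=(M^F)_J$. Its independent sets are the sets $I'\subseteq F\setminus J$ with $I'\sqcup J\in\mathcal{I}(M)$. Its flats are the sets $G\setminus J$, where $G$ ranges over flats of $M$ with $\operatorname{cl}_M(J)\subseteq G\subseteq F$; the proper ones are those with $G\subsetneq F$. Under these dictionaries the displayed cone is exactly $\sigma_{I\setminus J,\{G\setminus J\}}$, and compatibility $I\setminus J\subseteq G\setminus J$ follows from $I\subseteq G$.

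Conversely, take a compatible pair $(I',\mathcal{G}')$ of $M(J,F)$ with $\mathcal{G}'=\{G_k\setminus J\}$. Then $(I'\sqcup J,\{G_k\}\cup\{F\})$ is a compatible pair of $M$. It satisfies the intersection condition of the lemma, and its image is $\sigma_{I',\mathcal{G}'}$. So the two collections coincide, and equality of supports follows.

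The step I expect to need the most care is the bookkeeping of loops and identifications. When $J\neq\varnothing$, the matroid $M(J,F)$ has loops $\operatorname{cl}_M(J)\setminus J$, so I must use the convention of the remark extending augmented Bergman fans to matroids with loops. Under that convention, the coordinates in $\operatorname{cl}_M(J)\setminus J$ are identically zero on every image cone. This is consistent with the computation, because every $G\supseteq\operatorname{cl}_M(J)$ and every $I\setminus J$ avoids these loops.

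I also have to check two further points. First, distinct $(I,\mathcal{F})$ giving the same image (differing only in $J\subseteq I$ or in flats $G\supseteq F$) do not produce extra faces. Second, face relations are preserved. Both follow since the assignment $(I,\mathcal{F})\mapsto(I\setminus J,\{G\setminus J: G\subsetneq F\})$ is surjective and monotone.
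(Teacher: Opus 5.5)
Your proposal is correct and is essentially the paper's proof. It takes the compatible pairs $(I,\mathcal{F})$ with $J\subseteq I$ and $F\in\mathcal{F}$ singled out by the preceding lemma, computes $\pi_{J,K}(\sigma_{I,\mathcal{F}})=\operatorname{cone}(e_i)_{i\in I\setminus J}+\operatorname{cone}(-e_{F\setminus G})_{G\in\mathcal{F},\,G\subsetneq F}$, and matches these cones with those of $\Sigma^+_{M(J,F)}$ via $(I,\mathcal{F})\mapsto(I\setminus J,\mathcal{F}|_F)$ and its inverse. Your extra bookkeeping (flats $G\supseteq F$ projecting to $0$, and the loops $\operatorname{cl}_M(J)\setminus J$) only makes explicit what the paper leaves implicit; note also that when $F=E$ your lifted flag $\{G_k\}\cup\{F\}$ contains $E$, which is legitimate only under the convention of \cref{rmkadm}, exactly as in the paper.
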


\begin{proof}
 First note that we are identifying $O(J,K) \simeq \mathbb{R}^{E\setminus {J \sqcup K}}$ to define $\Sigma^+_{M(J,F)}$ in $O(J,K)$.
 By abusing notation, we let $e_i \coloneq \pi_{J,K}(e_i) \in O(J,K)$ for $i \in {E\setminus {J \sqcup K}}$.
 Note that $(e_i\,\vert \, i \in {E\setminus {J \sqcup K}})$ is a basis of $O(J,K)$.
 Let $\sigma_{I,\mathcal{F}} \in \Sigma^+_M$ such that $J \subseteq I$ and $F \in \mathcal{F}$.
 Then 
 \begin{align*}
  \pi_{J,K}(\sigma_{I,\mathcal{F}}) &= \pi_{J,K}(\operatorname{cone}(e_i)_{i\in I} + \operatorname{cone}(-e_{E\setminus G})_{G \in \mathcal{F}})\\
  &= \operatorname{cone}(e_i)_{i\in I \setminus J} + \operatorname{cone}(-e_{F\setminus G})_{\substack{G\in \mathcal{F} \\ G \preceq F}}.
 \end{align*}
 Note that for $I \supseteq J$ and $\mathcal{F}\vert_{F} \coloneq \{G \in \mathcal{F} \,\vert \, G \preceq F\}$,
 which is a flag of flats truncated by $F$, the pair $(I\setminus J,\mathcal{F}\vert_F)$ forms a compatible pair in $M(J,F)$
 and vice versa, any compatible pair in $M(J,F)$ is exactly of the form $(I \setminus J,\mathcal{F})$
 where $J \subseteq I$ and $(I,\mathcal{F})$ is compatible in $M$ and $F$ is the maximal element of $\mathcal{F}$. 
 This completes the proof.
\end{proof}

By \cref{rankstrat}, we conclude that $Y_M$ admits a stratification by augmented Bergman fans of $M(I,F)$.

\begin{corollary}
 $Y_M$ is a tropical variety.
\end{corollary}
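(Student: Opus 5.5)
We have to check the conditions of \cref{deftropvar}: $Y_M$ is of pure dimension $d$, connected, and carries a Borel--Moore class $\nu_{Y_M}\in\operatorname{H}^{\mathrm{BM}}_{d,d}(Y_M)$ with nowhere vanishing weights. We take the induced polyhedral structure $\mathcal{Y}$ of $Y_M$ by $\Sigma^+_M$, whose faces are $\pi_{J,K}(\sigma_{I,\mathcal{F}})$.

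For pure dimension, note that every face of $\mathcal{Y}$ is of the form $\pi_{J,K}(\sigma)$ with $\sigma\in\Sigma^+_M$. By \cref{rankstrat} it is a cone of $\Sigma^+_{M(J,F)}$ inside $O(J,K)$. Any compatible pair $(I,\mathcal{F})$ extends to a maximal one with $|I|+|\mathcal{F}|=d$, since $\Sigma^+_M$ is pure of dimension $d$. Hence $\sigma_{I,\mathcal{F}}$ lies in a $d$-dimensional cone $\sigma'$ of $\Sigma^+_M$. Moreover $\pi_{J,K}(\sigma)$ is a face of the extended polyhedron $\overline{\sigma'}$, which is a face of $\mathcal{Y}$ of dimension $d$ and sedentarity $0$. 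Thus every face lies in a $d$-dimensional face, and all top faces are exactly the closures of the maximal cones of $\Sigma^+_M$. Connectedness is immediate: $U_M$ is a fan, hence star-shaped and connected, so its closure $Y_M$ is connected.

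For the fundamental class, we put weight $\omega\equiv 1$ on each maximal cone of $\Sigma^+_M$, with generator $\nu_\eta$ the wedge of the primitive ray generators. It is known, and follows for instance from \cref{coextaugber} together with the balancing of $\Sigma_{\widetilde M}$ (the map $\gamma$ being unimodular), that $\Sigma^+_M$ is balanced with weights $1$. We then check that $\nu=\sum_\eta\nu_\eta$ is a cycle in $C^{\mathrm{BM}}_{d,\bullet}(\mathcal{Y})$. The boundary $\partial\nu$ is a sum over codimension-one faces $\tau$ of $\mathcal{Y}$, of two kinds.

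\begin{enumerate}
\item If $\tau$ has sedentarity $0$, it is a cone of $\Sigma^+_M$. The coefficient of $\partial\nu$ at $\tau$ lies in $\mathbf F_d(\tau)$ and is the sum of the images of the adjacent $\nu_\eta$. This vanishes by the balancing condition, read in the standard way as $\sum_{\eta\succ\tau}\pm\nu_\eta=0$ in $\bigwedge^d$ of the span after wedging with outward normals.
\item If $\tau$ is at infinity, it is $\pi_{J,K}(\sigma)$ with $|J|+|K|=1$, so $\operatorname{sed}(\tau)$ is a ray $\rho$. Then $\mathbf F_d(\tau)\subseteq\bigwedge^d O(\rho)$, where $O(\rho)$ has dimension $n-1$. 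The map $i_{\tau\prec\eta}$ is induced by the projection killing the ray direction of $\rho$. Since $\langle\eta\rangle$ contains that ray, $\nu_\eta$ maps to $0$ in $\bigwedge^d O(\rho)$.
\end{enumerate}
Hence $\partial\nu=0$ and $\nu$ defines a class in $\operatorname{H}^{\mathrm{BM}}_{d,d}(Y_M)$ with nowhere vanishing weights.

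The main obstacle is step (1), matching the sign convention of the tropical boundary with the classical balancing condition. One must check that the classical condition $\sum_{\eta\succ\tau}u_{\eta/\tau}\in\langle\tau\rangle$ is equivalent to the vanishing of the $\bigwedge^d$-boundary. This holds because wedging $u_{\eta/\tau}$ with a fixed generator of $\bigwedge^{d-1}\langle\tau\rangle$ gives exactly $\operatorname{sgn}(\tau,\eta)\nu_\eta$ up to a common sign; this is the content of \cref{lemwedge}. Balancing of $\Sigma^+_M$ itself we would take from \cite{MR4477425}, or deduce from \cref{coextaugber}.
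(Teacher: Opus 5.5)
Your proposal is correct and is essentially the paper's argument: weight $1$ on the closures of the maximal cones of $\Sigma^+_M$, with balancing imported from the Bergman fan of $\widetilde M$ via \cref{coextaugber}; you just make explicit the cycle check that the paper leaves implicit. One correction: a codimension-one face at infinity need not have a ray as its sedentary type. For $J=\varnothing$ and $F$ a hyperplane with $|E\setminus F|\ge 2$ the sedentarity is $|E\setminus F|$, which is exactly the failure of regularity at infinity noted in \cref{rmkcohomtrop}. This does not affect your conclusion: every stratum $U_{M(J,F)}$ with $(J,F)\neq(\varnothing,E)$ has dimension $r_M(F)-|J|<d$, so $\mathbf{F}_d(\tau)=0$ for every face $\tau$ of positive sedentarity.
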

\begin{proof}
 Let $M$ be a matroid of rank $d$.
 It is clear that $Y_M$ is connected of pure dimension $d$. 
 Let $\mathcal{Y}_M$ be the induced polyhedral structure of $Y_M$ by $\Sigma^+_M$.
 Note that the maximal dimensional faces of $\mathcal{Y}_M$ are the closure of maximal cones of $\Sigma^+_M$.
 Then $\Sigma^+_M$ satisfies the balancing condition by 
 \cite[Proposition 5.2]{Adiprasito2018} and \cref{coextaugber}
 since the existence of balancing condition only depends on the support.
 Then for  $w(\sigma) =1$ and picking integral generators $\nu_\sigma \in \bigwedge^d \langle \sigma \rangle$
 for $\sigma \in \mathcal{Y}_{M,d}$,
 the element $\nu_Y \coloneq \sum_{\sigma \in \mathcal{Y}_{M,d}}\nu_\sigma \in \operatorname{H}_{d,d}^{\mathrm{BM}}(Y_M)$. 
\end{proof}

\begin{corollary}[Stratification of $Y_M$ by admissible pairs]\label{stratYM}
  Let $U_{M(I,F)} =  |\Sigma^+_{M(I,F)}|$ be defined in $O(I,E\setminus F)$ as in \cref{rankstrat}.
  The variety $Y_M$ admits an augmented Bergman fan stratification labelled by admissible pairs
  \[Y_M = \coprod_{(I,F) \text{ admissible}} U_{M(I,F)}\]
  where the partial order of admissible pairs induced by the stratification
   is given by 
   \begin{center}
  $(J,G) \preceq (I,F)$ 
  if and only if 
  $I \subseteq J$ and $ G \preceq F$.
    \end{center}
\end{corollary}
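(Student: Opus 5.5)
The plan has two parts: first the disjoint decomposition, then the closure order between strata.

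\emph{Decomposition.} The torus orbits give $(\mathbb{TP}^1)^E=\coprod_{J\cap K=\varnothing} O(J,K)$, so $Y_M=\coprod (Y_M\cap O(J,K))$. By the lemma preceding \cref{rankstrat}, applied to the induced polyhedral structure of \cref{indpolcom}, the intersection $Y_M\cap O(J,K)$ is nonempty exactly when some cone $\sigma_{I,\mathcal{F}}\in\Sigma^+_M$ meets $\operatorname{relint}(\eta_{J,K})$. By that lemma this happens iff $J\subseteq I$ and $K=E\setminus F$ for some $F\in\mathcal{F}$. Since $I\subseteq F$ for compatible pairs, $(J,F)$ is then admissible. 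Conversely, if $(J,F)$ is admissible with $F$ proper, then $(J,\{F\})$ is compatible. The case $F=E$, i.e.\ $K=\varnothing$, needs separate handling, because flags consist of proper flats. Here $e_J\in\sigma_{J,\varnothing}$ lies in $\operatorname{relint}(\eta_{J,\varnothing})$, so the orbit $O(J,\varnothing)$ is still hit. The labelling $(J,K)\leftrightarrow(J,E\setminus K)$ is therefore a bijection between nonempty strata and admissible pairs, and \cref{rankstrat} identifies each stratum with $U_{M(J,F)}$.

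\emph{Partial order.} The stratification order is $S'\preceq S$ iff $S'\subseteq\overline{S}$; strata of larger sedentarity lie in the boundary. One must decide when $U_{M(J,G)}\subseteq\overline{U_{M(I,F)}}$. Three steps:

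\begin{enumerate}
\item Closure of orbits in $(\mathbb{TP}^1)^E$: $O(J,K)\subseteq\overline{O(I,K')}$ iff $\eta_{I,K'}\preceq\eta_{J,K}$, i.e.\ $I\subseteq J$ and $K'\subseteq K$. With $K=E\setminus G$ and $K'=E\setminus F$ this reads $I\subseteq J$ and $G\subseteq F$. This gives necessity.
\item Realising the closure: $\overline{U_{M(I,F)}}$ is the compactification of $U_{M(I,F)}\subseteq O(I,E\setminus F)\cong\mathbb{R}^{F\setminus I}$ inside $\overline{O(I,E\setminus F)}\cong(\mathbb{TP}^1)^{F\setminus I}$. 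That is, it equals $Y_{M(I,F)}$, since the orbit closure is itself a product of tropical projective lines.
\item Sufficiency: apply the decomposition of the first part to $M(I,F)$. Admissible pairs of $M(I,F)$ are $(J\setminus I,\,G\setminus I)$ with $J\supseteq I$ independent in $M$, $G$ a flat of $M$ with $I\subseteq G\subseteq F$, and $J\subseteq G$. This uses that flats of $M(I,F)=(M^F)_I$ correspond to flats of $M$ between $\operatorname{cl}(I)$ and $F$, and independents to $J\supseteq I$ with $J$ independent in $M$. Moreover $M(I,F)(J\setminus I,G\setminus I)=M(J,G)$ by associativity of minors. So every $(J,G)$ with $I\subseteq J$ and $G\preceq F$ labels a stratum contained in $\overline{U_{M(I,F)}}$.
\end{enumerate}

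\emph{Main obstacle.} The hardest step is step 2. One has to verify that the closure of the stratum $U_{M(I,F)}$ inside $Y_M$ equals its closure in the orbit closure, and that the projections $\pi$ are compatible, so that the restriction of the induced structure is the induced structure of $\Sigma^+_{M(I,F)}$. To handle this, I would compose the projections $\pi^{\zeta}_{\zeta'}$ and reuse the cone computation in the proof of \cref{rankstrat}. That computation shows that $\pi_{J,K}(\sigma_{I',\mathcal{F}})$ factors through $\pi_{I,E\setminus F}$ whenever $I\subseteq J$ and $F$ lies in the truncated flag.
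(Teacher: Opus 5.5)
Your proposal is correct and follows essentially the same route as the paper. You read off the decomposition from \cref{rankstrat}, identify the closure of $U_{M(I,F)}$ with $Y_{M(I,F)}$ inside $V(I,E\setminus F)\simeq(\mathbb{TP}^1)^{F\setminus I}$, and apply the decomposition again to $M(I,F)$. Your separate orbit-closure argument for necessity and your explicit treatment of $F=E$ are harmless extras, since writing $\overline{U_{M(I,F)}}$ as a disjoint union of nonempty strata already gives both directions.

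The step you flag as the main obstacle is in fact immediate. $V(I,E\setminus F)$ is closed in $(\mathbb{TP}^1)^E$, so closures computed there and in $Y_M$ agree. No compatibility of polyhedral structures is needed for this set-theoretic statement.
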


\begin{proof}
 The disjoint union being $Y_M$ directly follows from \cref{rankstrat}. 
 Note that each $\Sigma^+_{M(I,F)}$ is a fan in $O(I,E\setminus F)$.
 Since the closure of $O(I,E\setminus F)$ in $(\mathbb{TP}^1)^E$ is 
 \[V(I,E\setminus F) = \coprod_{\substack{I \subseteq  A,\, E\setminus F \subseteq  B \\ A \cap B = \varnothing}} O(A,B), \]
 the closure of $U_{M(I,F)}$ in $(\mathbb{TP}^1)^E$ is the same as taking closure in 
 $V(I,E\setminus F)$ which is isomorphic to $(\mathbb{TP}^1)^{F\setminus I}$. 
 Observe that this is again a tropical matroid Schubert variety in $V(I,E\setminus F)$,
 identified with $Y_{M(I,F)}$.
 Applying \cref{rankstrat} again, we get that 
 \[Y_{M(I,F)} = \coprod_{\substack{(J,G) \text{ admissible} \\ I \subseteq J, \, G \preceq F}}U_{M(J,G)}\]
 which gives precisely the order we described.
\end{proof}

\begin{remark}\label{rmkadm}
 Note that in the proof of \cref{rankstrat}, in $M(I,F)$, we only consider flags of flats with the maximal flat $F$.
 By this stratification, $U_{M(\varnothing , E)}$ is the maximal stratum in the partial order.
 Then in $\Sigma^+_M$, the condition of a flag of proper flats $\mathcal{F}$ in the definition of compatible pair 
 can be considered as a flag of flats having the maximal flat $E$.
\end{remark}

\begin{remark}
Let $(I,F)$ be an admissible pair. Consider its matroid minor $M(I,F)$.
The lattice of flats of $M(I,F)$
is canonically isomorphic to the interval $[cl_M(I),F]$ of $L(M)$ by sending $G\in [cl_M(I),F]$ to $G\setminus I \in L(M(I,F))$
where $cl_M(I)$ is the closure of $I$.

  Note that if $I,J\in\mathcal{I}(M)$ satisfy ${cl}_M(I)={cl}_M(J)$,
  then the corresponding matroid minors are isomorphic, i.e., $M(I,F)\simeq M(J,F)$.
  However, we emphasise that it is crucial to distinguish these minors in this stratification.
  Indeed, although the minors are isomorphic, they are defined on different ground sets,
  namely $F\setminus I$ and $F\setminus J$, and this distinction will play an essential role.
  Moreover, $M(I,F)$ need not be loopless even if $M$ is loopless.
  Nevertheless, the stratification of $Y_M$ keeps track of loop elements. See \cref{ex2} for an explicit example.
\end{remark}

Now suppose $M$ is not a connected matroid,
i.e., $M = N \oplus O$ for matroids $N$ and $O$.
Let ground sets of $N$ and $O$ be $E$ and $E'$ respectively. 
Then $M$ is a matroid with the ground set $E \sqcup E'$.

\begin{proposition}\label{Y_Mproductdecomp}
 Let $M = N \oplus O$ be as above. Then $Y_M = Y_N \times Y_O$.
\end{proposition}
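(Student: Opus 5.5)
The plan is to first prove the identity $U_M = U_N \times U_O$ inside $\mathbb{R}^{E\sqcup E'} = \mathbb{R}^E \times \mathbb{R}^{E'}$. Closures then finish the argument: the inclusion $\mathbb{R}^{E\sqcup E'} \hookrightarrow (\mathbb{TP}^1)^{E\sqcup E'} = (\mathbb{TP}^1)^E \times (\mathbb{TP}^1)^{E'}$ is the product of the two inclusions, and the closure of a product of subsets in a product space is the product of the closures. Hence $Y_M = \overline{U_N\times U_O} = \overline{U_N}\times\overline{U_O} = Y_N\times Y_O$. The extended polyhedral structures are compatible as well: the product of the induced structures by $\Sigma^+_N$ and $\Sigma^+_O$ is a polyhedral structure on the product. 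So the only substantive point is the support identity.

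For $U_M = U_N\times U_O$, I would use the description of cones of $\Sigma^+_M$ by compatible pairs $(I,\mathcal{F})$. The relevant matroid facts for $M=N\oplus O$ are:
\begin{itemize}
\item independent sets are $I = I_N\sqcup I_O$ with $I_N\in\mathcal{I}(N)$ and $I_O\in\mathcal{I}(O)$;
\item flats are $F = F_N\sqcup F_O$ with $F_N\in L(N)$ and $F_O\in L(O)$.
\end{itemize}

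Take a point $x = \sum_{i\in I}\lambda_i e_i - \sum_{F\in\mathcal{F}}\mu_F e_{E\sqcup E'\setminus F}$ of $\sigma_{I,\mathcal{F}}$. Splitting coordinates gives $x_N = \sum_{i\in I_N}\lambda_i e_i - \sum_F \mu_F e_{E\setminus F_N}$, and similarly for $x_O$. The flats $F_N$ for $F\in\mathcal{F}$ form a chain, possibly with repetitions and possibly equal to $E$, where the term vanishes. Each contains $I_N$. So $x_N \in \sigma_{I_N,\mathcal{F}_N}\subseteq U_N$, and likewise $x_O\in U_O$.

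Conversely, given $x_N\in\sigma_{I_N,\mathcal{F}_N}$ and $x_O\in\sigma_{I_O,\mathcal{F}_O}$, we need a single chain in $L(M)$ whose cone contains $(x_N,x_O)$. Write $x_N = \sum \lambda_i e_i - \sum_{k}\mu_k e_{E\setminus F^N_k}$. It is convenient to rewrite the negative part as a nonincreasing function: coordinate $j$ of the negative part equals $-\sum_{k:\, j\notin F^N_k}\mu_k$. Thus the negative part of $x_N$ is $-g_N$, where $g_N\ge 0$ is constant on the differences of the chain and has superlevel-complement sets $\{g_N< t\}$ that are flats containing $I_N$. The same holds for $g_O$.

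Now set $g = (g_N,g_O)$ on $E\sqcup E'$. For each $t$, the set $\{g<t\} = \{g_N<t\}\sqcup\{g_O<t\}$ is a flat of $M$ containing $I$. These sets form a chain as $t$ varies. Writing $g = \sum_t (\text{jump at } t)\, e_{\{g\ge t\}}$ then expresses the negative part as a nonnegative combination of $-e_{(E\sqcup E')\setminus G}$ over a chain of flats $G\supseteq I$. This places $(x_N,x_O)$ in some $\sigma_{I,\mathcal{G}}$.

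The main obstacle is this converse merging step. The two chains must be interleaved into one chain of flats of $M$, and the level-set reformulation is exactly what makes that interleaving automatic. One bookkeeping point remains: the positive part $\sum_{i\in I}\lambda_ie_i$ lives on coordinates in $I\subseteq G$ for all $G$, so these coordinates have $g=0$. This means positive and negative supports do not interfere, and the decomposition into positive and negative parts is unambiguous.

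Loops are handled by the convention extending $\Sigma^+$ via $M_L$, since the loops of $M$ are the union of the loops of $N$ and of $O$.
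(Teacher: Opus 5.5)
Your proof is correct, but it reaches the key identity $U_M=U_N\times U_O$ by a different route from the paper. The closure step is the same in both.

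The paper never works with compatible pairs here. It uses $|\Sigma^+_M|=\gamma(|\Sigma_{\widetilde M}|)$ to replace the augmented Bergman fan by the Bergman fan of the free coextension. It then quotes the product theorem $|\Sigma_{P_0(M_1,M_2)}|\simeq|\Sigma_{M_1}|\times|\Sigma_{M_2}|$ for parallel connections (Amini--Piquerez, Shaw--Werner). This reduces everything to the matroid identity $\widetilde{N\oplus O}=P_0(\widetilde N,\widetilde O)$, which it checks via Oxley's characterisation of the free coextension: contracting $0$ gives $N\oplus O$, and $0$ is cofree in the parallel connection.

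Your argument is elementary and self-contained. Restricting a chain of flats of $N\oplus O$ to its $N$- and $O$-parts gives $U_M\subseteq U_N\times U_O$. For the reverse inclusion, the level-set device $g=(g_N,g_O)$, $G_t=\{g<t\}$, merges two chains into a single chain of flats of $N\oplus O$. Together the two inclusions in effect exhibit $\Sigma^+_M$ as a refinement of the product fan $\Sigma^+_N\times\Sigma^+_O$.

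A further advantage is that your equality holds literally in the coordinates of $\mathbb{R}^{E\sqcup E'}$, which is exactly what the closure argument in $(\mathbb{TP}^1)^{E\sqcup E'}$ needs. On the paper's route one must tacitly check that $\gamma$ and the parallel-connection isomorphism respect the coordinate splitting (they do). What the paper's route buys is continuity with the coextension viewpoint it uses elsewhere (local matroidality, Poincar\'e duality, the $f$-vector count), at the price of depending on external results.

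One small precision: in the merging step, $t$ should range only over the positive values taken by $g$. For $t\le 0$ the set $\{g<t\}$ is empty and need not contain $I$. For positive attained values, each $G_t$ is a proper flat containing $I$, which is exactly what your argument requires.
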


\begin{proof}
 We first claim that $U_{M} = U_{N} \times U_{O}$.
 Recall that $|\Sigma_M^+|=|\Sigma_{\widetilde{M}}|$, 
 so we may identify $U_M$ with the support of $\Sigma_{\widetilde{M}}$ 
 where $\widetilde{M}$ is the free coextension of $M$. 
 Let $0$ be the adjoined element of free coextensions.
 \cite[Proposition 8.8]{Amini2023} or \cite[Theorem 7.2]{Shaw2023} state that for a matroid $M=P_0(M_1,M_2)$
 which is defined by the parallel connection of $M_1$ and $M_2$ by element $0$, 
 $|\Sigma_{M}| \simeq |\Sigma_{M_1}| \times |\Sigma_{M_2}|$. 
 Hence, in our case, we only need to show that $\widetilde{M} = P_0(\widetilde{N}, \widetilde{O})$,
 i.e., the parallel connection of $\widetilde{N}$ and $\widetilde{O}$ by the adjoined element $0$
 agrees with $\widetilde{M} = \widetilde{N\oplus O}$. 
 This follows from \cite[Proposition 7.1.15 (iii)]{Oxley2011} since 
 $P_0(\widetilde{N},\widetilde{O})/0 = N \oplus O$ implies that 
 $((P_0(\widetilde{N},\widetilde{O})/0)^*+0)^*  = \widetilde{N \oplus O}$ 
 and since $0$ is a cofree element in $P_0(\widetilde{N},\widetilde{O})$,
 $((P_0(\widetilde{N},\widetilde{O})/0)^*+0)^* = P_0(\widetilde{N},\widetilde{O})$.

 Then since the topological closure of $U_{M}$ in ${(\mathbb{TP}^1)}^{E\sqcup E'}$ 
 agrees with the product of the closure of $U_{N}$ in ${(\mathbb{TP}^1)}^{E}$
 and $U_{O}$ in ${(\mathbb{TP}^1)}^{E'}$ respectively, $Y_M = Y_N \times Y_O$ holds.
\end{proof}

\begin{corollary}\label{Y_Mkunneth}
  Let $M=N \oplus O$ be as above. Then
  $\operatorname{H}^{\bullet,\bullet}(Y_M) \simeq \operatorname{H}^{\bullet,\bullet}(Y_N) \otimes \operatorname{H}^{\bullet,\bullet}(Y_O)$.
\end{corollary}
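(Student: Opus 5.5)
The plan is to deduce the corollary from \cref{Y_Mproductdecomp} together with a K\"unneth formula for tropical cohomology of compact extended polyhedral spaces with real coefficients. By \cref{Y_Mproductdecomp} we have $Y_M = Y_N \times Y_O$ inside $(\mathbb{TP}^1)^{E\sqcup E'} = (\mathbb{TP}^1)^E \times (\mathbb{TP}^1)^{E'}$. It therefore suffices to prove that for compact extended polyhedral spaces $X \subseteq \mathbb{TP}_{\Delta}$ and $X' \subseteq \mathbb{TP}_{\Delta'}$ there is a natural isomorphism
\[
\operatorname{H}^{p,q}(X\times X') \simeq \bigoplus_{\substack{p_1+p_2=p\\ q_1+q_2=q}} \operatorname{H}^{p_1,q_1}(X)\otimes \operatorname{H}^{p_2,q_2}(X'),
\]
compatible with cup products. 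This is available in the literature on tropical (co)homology (e.g.\ the K\"unneth theorem in \cite{Gross2023}). I would nevertheless sketch the cellular argument directly for our setting.

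First I would fix the induced polyhedral structures $\mathcal{Y}_N$ and $\mathcal{Y}_O$ by $\Sigma^+_N$ and $\Sigma^+_O$, and check that the product cells $\sigma\times\sigma'$ form the induced polyhedral structure of $Y_M$ by the fan $\Sigma^+_N\times\Sigma^+_O$. This fan has support $U_M$ by the proof of \cref{Y_Mproductdecomp}, and it is $(\Pi^1)^{E\sqcup E'}$-compatible because each factor is. Since tropical cohomology is independent of the polyhedral structure, we may compute with this product structure.

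Next I would observe two facts about a product cell: $\operatorname{sed}(\sigma\times\sigma') = \operatorname{sed}(\sigma)\times\operatorname{sed}(\sigma')$, and the cell $\sigma\times\sigma'$ is compact exactly when both factors are compact. The key local computation is the multi-tangent identity
\[
\mathbf{F}_p(\sigma\times\sigma') = \bigoplus_{a+b=p}\mathbf{F}_a(\sigma)\otimes\mathbf{F}_b(\sigma').
\]
This should follow from the facts that the cells $\delta\times\delta'$ containing $\sigma\times\sigma'$ with the same sedentarity are exactly the products of such cells, and that $\bigwedge^p(\langle\delta\rangle\oplus\langle\delta'\rangle)=\bigoplus_{a+b=p}\bigwedge^a\langle\delta\rangle\otimes\bigwedge^b\langle\delta'\rangle$. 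The main obstacle is the summation over all $\delta\succeq\sigma$ and $\delta'\succeq\sigma'$. Here I would show that the sum of the tensor products equals the tensor product of the sums, using that sums of subspaces distribute over tensor products in each bidegree. Dualising then gives the corresponding identity for $\mathbf{F}^p$.

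Finally, I would choose product orientations, so that the signs satisfy $\operatorname{sgn}(\tau\times\sigma',\sigma\times\sigma')=\operatorname{sgn}(\tau,\sigma)$ and $\operatorname{sgn}(\sigma\times\tau',\sigma\times\sigma')=(-1)^{|\sigma|}\operatorname{sgn}(\tau',\sigma')$. The restriction maps $i^*$ also factor as tensor products of the factor maps. Together these identify $C^{p,\bullet}(\mathcal{Y}_M)$ with $\bigoplus_{a+b=p} C^{a,\bullet}(\mathcal{Y}_N)\otimes C^{b,\bullet}(\mathcal{Y}_O)$ as complexes. The algebraic K\"unneth theorem over the field $\mathbb{R}$ then gives the isomorphism. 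Compatibility with the bigraded algebra structure holds because the cross product $\alpha\times\beta = \mathrm{pr}_1^*\alpha\smile\mathrm{pr}_2^*\beta$ realises the isomorphism.
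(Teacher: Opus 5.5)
Your proposal is correct and takes the paper's route: the paper likewise combines $Y_M = Y_N \times Y_O$ from \cref{Y_Mproductdecomp} with the K\"unneth formula \cite[Theorem B]{Gross2023}. Your extra cellular sketch, built on product cells and the identity $\mathbf{F}_p(\sigma\times\sigma')=\bigoplus_{a+b=p}\mathbf{F}_a(\sigma)\otimes\mathbf{F}_b(\sigma')$, is a sound direct verification that the paper does not carry out, since it simply cites the theorem.
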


\begin{proof}
 This comes from applying the Künneth formula for tropical cohomology, see \cite[Theorem B]{Gross2023}. 
\end{proof}

\begin{remark}
  Note that there is an isomorphism of graded posets $L(M) \simeq L(N)\times L(O)$ 
  (e.g., see \cite[Proposition 4.2.16]{Oxley2011}).
  Together with \cref{Y_Mproductdecomp,Y_Mkunneth}, this shows that the assignments
  $M \mapsto L(M)$ and $M \mapsto Y_M$ both send direct sums to products.
  This parallel behaviour suggests that there might be a more systematic, possibly functorial,
  relationship among the matroidal combinatorics of $M$, the geometry of $Y_M$, and the graded-poset combinatorics of $L(M)$.
\end{remark}

\section{Cohomology of $Y_M$ via the rank filtration}\label{secrankfilt}
In this section, we compute the tropical cohomology of $Y_M$ by studying a filtration 
induced by the stratification introduced in \cref{geomofYM}. 
In particular, we prove \cref{thm1} by using the spectral sequence induced by this filtration.

Let $M$ be a matroid of rank $d$.
Recall that each stratum is paired with a matroid minor of the form $M(I,F)$ where $(I,F)$ is an admissible pair.

\begin{definition}[Rank filtration of $Y_M$]
Define the \emph{rank of a stratum} $U_{M(I,F)}$ by the rank of $M(I,F)$.
The \emph{rank filtration} of $Y_M$ is the descending filtration
\[\varnothing \subseteq  Y^d \subseteq Y^{d-1} \subseteq \dots \subseteq Y^0 = Y_M \]
where $Y^k$ is the union of strata of rank $\geq k$.
\end{definition}

Clearly $Y^0 =Y_M$ and $\dim (Y^k \setminus Y^{k+1}) = k$. Let $\mathcal{Y}_M$ be the induced polyhedral structure of $Y_M$ by $\Sigma^+_M$.
For a face $\delta \in \mathcal{Y}_M$ we define the \emph{rank} of $\delta$ denoted by 
$\operatorname{rk}(\delta)$ as the rank of the stratum containing $\operatorname{relint}(\delta)$.

Recall that for fixed $p$, $\operatorname{H}^{p,q}(Y_M)$ can be computed by the $q$-th cohomology of $C^{p,\bullet}$
where 
\[C^{p,q} = \bigoplus_{\substack{\tau \in \mathcal{Y}_M,\,|\tau|=q}} \mathbf{F}^p(\tau).\]
We removed the condition of compactness of faces since $Y_M$ itself is compact.
The rank filtration of $Y_M$ induces a filtration on $\operatorname{H}^{p,\bullet}(Y_M)$ as follows.

\begin{lemma}[Rank filtration of $\operatorname{H}^{p,\bullet}(Y_M)$]
 Let 
 \[R^k C^{p,q} = \bigoplus_{\substack{\tau \in \mathcal{Y}_M,\,|\tau|=q \\ \operatorname{rk}(\tau)\geq k}} \mathbf{F}^p(\tau).\]
 Then $R^\bullet$ gives a descending filtration for $C^{p,\bullet}$ and hence for $\operatorname{H}^{p,\bullet}(Y_M)$.
\end{lemma}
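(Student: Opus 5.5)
The plan is to show that $R^kC^{p,\bullet}$ is a subcomplex of $C^{p,\bullet}$, i.e.\ that the differential $d$ preserves it. The inclusions $R^{k+1}C^{p,q}\subseteq R^kC^{p,q}$, with $R^0C^{p,q}=C^{p,q}$ and $R^{d+1}C^{p,q}=0$, are immediate from the definition. Once $d(R^kC^{p,q})\subseteq R^kC^{p,q+1}$ is established, $R^\bullet$ is a finite descending filtration of cochain complexes. It therefore induces a filtration on $\operatorname{H}^{p,\bullet}(Y_M)$, namely by the images of $\mathcal{H}^q(R^kC^{p,\bullet})\to \operatorname{H}^{p,q}(Y_M)$, together with the associated spectral sequence.

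The differential is a signed sum of the maps $i^*_{\tau\prec\sigma}:\mathbf{F}^p(\tau)\to\mathbf{F}^p(\sigma)$ over pairs of faces $\tau\prec\sigma$ with $|\sigma|=|\tau|+1$. It thus suffices to prove a monotonicity property: if $\tau\preceq\sigma$ in $\mathcal{Y}_M$, then $\operatorname{rk}(\sigma)\geq\operatorname{rk}(\tau)$. Given this, a summand of $R^kC^{p,q}$ indexed by $\tau$ with $\operatorname{rk}(\tau)\geq k$ is sent only to summands indexed by cofaces $\sigma$ with $\operatorname{rk}(\sigma)\geq\operatorname{rk}(\tau)\geq k$, which lie in $R^kC^{p,q+1}$.

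To prove monotonicity, let $\operatorname{relint}(\sigma)\subseteq U_{M(I,F)}$ and $\operatorname{relint}(\tau)\subseteq U_{M(J,G)}$. Since $\tau\subseteq\sigma\subseteq\overline{U_{M(I,F)}}$, the stratum containing $\operatorname{relint}(\tau)$ meets the closure of $U_{M(I,F)}$. From the proof of \cref{stratYM}, this closure is $Y_{M(I,F)}$, which is the union of the strata $U_{M(J',G')}$ with $I\subseteq J'$ and $G'\preceq F$. By disjointness of the strata we get $(J,G)\preceq(I,F)$, that is, $I\subseteq J$ and $G\subseteq F$. Then
\[
\operatorname{rk}(J,G)=r_M(G)-|J|\leq r_M(F)-|I|=\operatorname{rk}(I,F),
\]
using the monotonicity of $r_M$ on flats together with $|I|\leq|J|$.

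The only delicate point is justifying that the relative interior of a face of the induced structure $\mathcal{Y}_M$ lies in a single stratum and that faces of closures stay in the closure of the stratum. This follows from \cref{rankstrat}, since faces are of the form $\pi_{J,K}(\sigma_{I',\mathcal{F}})$ with relative interior in one orbit $O(J,K)$, so I expect no real obstacle.
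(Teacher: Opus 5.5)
Your proposal is correct and follows essentially the paper's argument. The paper notes that $d$ only maps to cofaces of equal or lower sedentarity, which by \cref{stratYM} means moving up in the order on admissible pairs, along which $\operatorname{rk}(I,F)=r_M(F)-|I|$ is non-decreasing; you phrase the same step via closures of strata and make the inequality $r_M(G)-|J|\le r_M(F)-|I|$ explicit, which the paper leaves implicit.
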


\begin{proof}
  Note that the differential $d : C^{p,q} \to C^{p,q+1}$
  either decreases or preserves the sedentarity of faces. 
  Hence it suffices to show that the rank filtration is closed under sedentarity decrease.
  This is clear by \cref{stratYM} since $(J,G) \preceq (I,F)$ implies that $\eta_{I,E\setminus F} \preceq\eta_{J,E\setminus G}$. 
\end{proof}

The filtration $R^\bullet$ induces an associated graded complex
\[\operatorname{gr}^\bullet_R C^{p,\bullet} \coloneq R^\bullet C^{p,\bullet} / R^{\bullet +1}C^{p,\bullet}, \qquad
\operatorname{gr}_R^k C^{p,q} = \bigoplus_{\substack{|\tau|=q\\ \operatorname{rk}(\tau)=k}}\mathbf{F}^p(\tau).\]

For simplicity, for an admissible pair $(I,F)$,
let $\mathbf{F}_i(\underline{0}_{(I,F)})\coloneq \mathbf{F}_i(\underline{0}_{\Sigma^+_{M(I,F)}})$ and similarly for $\mathbf{F}^i$ as well.

\begin{proposition}[Rank spectral sequence]\label{rankspprop}
Let 
\[{}_pE_0^{a,b}(M) \coloneq \operatorname{gr}_R^a C^{p,a+b} = \bigoplus_{\substack{|\tau|=a+b\\ \operatorname{rk}(\tau)=a}}\mathbf{F}^p(\tau) \]
 be the zeroth page of the spectral sequence induced by $R^\bullet$.  
 Then, the following hold.

 \begin{enumerate}[label=(\alph*)]
  \item\label{a} The spectral sequence degenerates at ${}_pE_2$ and vanishes for $b \neq 0$, i.e., 
  \[
    {}_pE_2^{a,0}(M) \simeq \operatorname{H}^{p,a}(Y_M). 
  \]
  \item \label{b} For each $p\ge 0$, the Euler characteristic of
  ${}_pE_1(M)$ is the $p$-th Whitney number of the second kind, that is, 
  \[
    \sum_{a\geq 0}(-1)^{a-p} \dim {}_pE_1^{a,0}(M) = W_p(M).
  \]
\end{enumerate}

\end{proposition}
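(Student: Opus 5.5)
The plan is to identify the $E_1$-page stratum by stratum and then read off both claims. Since $\operatorname{gr}_R^a C^{p,\bullet}$ only involves faces lying in strata of rank exactly $a$, and distinct strata of the same rank are not in each other's closure, the complex splits as a direct sum over admissible pairs $(I,F)$ with $\operatorname{rk}(I,F)=a$. Each summand is the cochain complex of faces of the induced structure whose relative interior lies in $U_{M(I,F)}$, which by \cref{rankstrat} are exactly the cones of $\Sigma^+_{M(I,F)}$. Note that the multi-tangent spaces $\mathbf{F}_p(\tau)$ are computed only with faces of the same sedentarity, so they agree with those of the fan $\Sigma^+_{M(I,F)}$ in $O(I,E\setminus F)$. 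The summand is therefore the complex computing cohomology with compact supports of the open fanfold $U_{M(I,F)}$, shifted so that $E_1^{a,b}=\bigoplus_{\operatorname{rk}(I,F)=a}\operatorname{H}_{\mathrm c}^{p,a+b}(U_{M(I,F)})$. I would check the sign conventions here against the standard ones, but the identification should be formal.

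Next I apply Poincaré duality. By \cref{coextaugber}, $U_{M(I,F)}$ is the support of the Bergman fan of the free coextension, so it is locally matroidal of dimension $a$. \cref{locmatpd} then gives $\operatorname{H}_{\mathrm c}^{p,q}(U)\simeq \operatorname{H}_{a-p,a-q}(U)$. A fan has tropical homology only in degree $0$, where it equals $\mathbf{F}_{a-p}(\underline{0}_{(I,F)})$. Hence $E_1^{a,b}=0$ unless $a+b=a$, i.e.\ $b=0$, and
\[
{}_pE_1^{a,0}(M)\simeq\bigoplus_{\operatorname{rk}(I,F)=a}\mathbf{F}_{a-p}(\underline{0}_{(I,F)}).
\]
Since the $E_1$-page is concentrated on the row $b=0$, all differentials $d_r$ with $r\ge2$ change $b$ and vanish. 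So the sequence degenerates at $E_2$, and $E_2^{a,0}\simeq \operatorname{H}^{p,a}(Y_M)$, which is \ref{a}.

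For \ref{b} I need $\dim\mathbf{F}_{k}(\underline{0})$ of an augmented Bergman fan. By \cref{coextaugber}, $\mathbf{F}_k(\underline{0}_{\Sigma_N^+})=\mathbf{F}_k(\underline{0}_{\Sigma_{\widetilde N}})$, and for a Bergman fan the dimension of $\mathbf{F}_k$ at the origin is $|\overline{\chi}^{\,r-k}|$ (Zharkov / Orlik--Solomon), with $r$ the rank minus one. Combined with \cref{coextf}, this should give $\dim \mathbf{F}_{k}(\underline{0}_{(I,F)})=f^{\,a-k}_{M(I,F)}$, taking $\overline\chi$ of $\widetilde N$ of rank $a+1$. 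With $k=a-p$ this is $f^{\,p}_{M(I,F)}$.

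Matching the index with the definition of $n_p^i$ requires care. The Euler characteristic should be $\sum_a(-1)^{a-p}\sum_{\operatorname{rk}(I,F)=a} f^{\,a-p}_{M(I,F)}$. This is exactly $N_p(M)$ with $i=a-p$, so the degree convention in $\mathbf{F}_{a-p}$ versus $f^{a-p}$ must be reconciled. I expect this bookkeeping step (dualising correctly, accounting for loops in $M(I,F)$, and using the right reading of the Zharkov--Orlik--Solomon dimension formula with \cref{coextf}) to be the main obstacle. Once it is settled, \cref{whitprop} gives $N_p(M)=W_p(M)$, proving \ref{b}.
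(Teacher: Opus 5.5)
Your argument for (a) is the same as the paper's. You split $\operatorname{gr}_R$ over the strata of rank $a$, identify ${}_pE_1^{a,b}$ with $\bigoplus_{\operatorname{rk}(I,F)=a}\operatorname{H}^{p,a+b}_{\mathrm{c}}(U_{M(I,F)})$, apply \cref{locmatpd}, and use that a fanfold has tropical homology only in degree $0$. Part (b) also follows the paper's route, but as written it contains a genuine error, exactly at the step you defer as bookkeeping. Your final expression for the Euler characteristic is asserted rather than derived, and it contradicts the dimension you computed one line earlier.

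The error is in how you compose the two index formulas. Let $N=M(I,F)$, of rank $a$, so $\widetilde N$ has rank $a+1$. Zharkov's formula gives $\dim\mathbf{F}_k(\underline{0}_{\Sigma_{\widetilde N}})=|\overline{\chi}^{\,a-k}_{\widetilde N}|$. Applying \cref{coextf} with $d=a$ turns this into $f_N^{\,a-(a-k)}=f^{k}_N$, not $f^{a-k}_N$. Hence $\dim\mathbf{F}_{a-p}(\underline{0}_{(I,F)})=f^{a-p}_{M(I,F)}$, which is what the paper uses. A quick check confirms this: $\mathbf{F}_0(\underline{0})=\mathbb{R}$ has dimension $f^0=1$, and $\mathbf{F}_1(\underline{0})$ is the span of all rays, i.e. the coordinate space on the non-loops, of dimension $f^1$. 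Your formula would give $f^a$ and $f^{a-1}$ instead.

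This is not a harmless convention. With your value $f^{p}_{M(I,F)}$, the alternating sum is not $N_p(M)$, and it gives the wrong answer. Take $M=U_{1,2}$ and $p=0$. There are three admissible pairs of rank $0$, namely $(\varnothing,\varnothing)$, $(\{1\},E)$ and $(\{2\},E)$, and one of rank $1$, namely $(\varnothing,E)$. Your count gives $3-f^0_{U_{1,2}}=2$, whereas $W_0(M)=1$ and the correct count is $3-f^1_{U_{1,2}}=1$.

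With the corrected dimension, $\dim{}_pE_1^{a,0}(M)=\sum_{\operatorname{rk}(I,F)=a}f^{a-p}_{M(I,F)}$. The alternating sum is then literally $N_p(M)$ after setting $i=a-p$, and \cref{whitprop} finishes the proof; nothing further needs reconciling. Loops cause no difficulty either: they lie in no independent set, and $\Sigma^+_{M(I,F)}$ is defined through the loopless part.
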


\begin{proof}
  Note that $d_0$ in ${}_pE_0$ preserves the rank and hence the stratification. 
  This implies that ${}_pE_1$ is equal to the direct sum of the compact support cohomology of each stratum of fixed rank.
  Hence, 
  \[{}_pE^{a,b}_1(M) = \bigoplus_{\operatorname{rk}(I,F)=a} \operatorname{H}^{p,a+b}_{{\mathrm{c}}} (U_{M(I,F)}).\]
  Since $\Sigma^+_{M(I,F)}$ has the same support as $\Sigma_{\widetilde{M(I,F)}}$ by \cref{coextaugber},
  the support $U_{M(I,F)}$ is locally matroidal. We can apply Poincaré duality to each component and get 
  \[{}_pE^{a,b}_1(M) \simeq \bigoplus_{\operatorname{rk}(I,F)=a} \operatorname{H}_{a-p,-b}(U_{M(I,F)}).\]
  Since the homology of a fanfold is trivial except possibly in $(k,0)$-degrees for $k \geq 0$, we conclude that 
  ${}_pE_1^{a,b}$ vanishes for $b \neq 0$. 
  For $b=0$, $\operatorname{H}_{a-p,0}(U_{M(I,F)}) = \mathbf{F}_{a-p}(\underline{0}_{(I,F)})$ holds.
  Moreover ${}_pE_1$ is concentrated in $b=0$. Therefore, it degenerates at ${}_pE_2$ and vanishes for $b\neq 0$.
  This proves \ref{a}.

  For \ref{b}, note that $\dim \mathbf{F}_p(\underline{0}_{\Sigma_M}) = |\overline{\chi}_M^{d-p}|$
  by \cite[Theorem 1.4]{Zharkov2013}
  where $d+1$ is the rank of $M$ and  $\overline{\chi}_M$ is the reduced characteristic polynomial of $M$.
  Since $|\Sigma^+_{M(I,F)}| = |\Sigma_{\widetilde{M(I,F)}}|$, we get 
  \[\dim \mathbf{F}_{a-p}(\underline{0}_{(I,F)}) = \dim \mathbf{F}_{a-p}(\underline{0}_{\Sigma_{\widetilde{M(I,F)}}}) = |\overline{\chi}_{\widetilde{M(I,F)}}^p| = f_{M(I,F)}^{a-p}\]
  where the last equality follows from \cref{coextf}. Note that $\mathbf{F}_{a-p}$ is defined for 
  $a \geq p$ and hence it vanishes for $a < p$. 
  Consider the 
  Euler characteristic of the cochain complex ${}_pE_1^{\bullet,0}$.
  Since
  \[\dim {}_pE_1^{a,0}(M) = \sum_{\operatorname{rk}(I,F)=a} f_{M(I,F)}^{a-p}\]
  we get
  \[\sum_{a\geq p} (-1)^{a-p}\dim {}_pE_1^{a,0}(M) = \sum_{(I,F)} \sum_{a\geq p} (-1)^{a-p} f_{M(I,F)}^{a-p}\]
  which coincides with $N_p(M)$. By \cref{whitprop}, $N_p(M) = W_p(M)$ and this proves \ref{b}.
  This completes the proof.
\end{proof}

  From the proof of \cref{rankspprop}, we already know that $\operatorname{H}^{p,q}(Y_M)=0$ for $q < p$.
  Hence, to prove \cref{thm1}, we only need to show that $\operatorname{H}^{p,q}(Y_M)=0$ for $q > p$. 
  Note that this is equivalent to showing that ${}_pE_1^{\bullet,0}(M)$ is exact at degree $a$ for all $a > p$.
  Consider the following diagram:
\[\begin{tikzcd}
	{\displaystyle\bigoplus_{\operatorname{rk}(I,F)=a} \operatorname{H}^{p,a}_{{\mathrm{c}}} (U_{M(I,F)})} & {\displaystyle\bigoplus_{\operatorname{rk}(I,F)=a+1} \operatorname{H}^{p,a+1}_{{\mathrm{c}}} (U_{M(I,F)})} \\
	{\displaystyle\bigoplus_{\operatorname{rk}(I,F)=a}\mathbf{F}_{a-p}(\underline{0}_{(I,F)})} & {\displaystyle\bigoplus_{\operatorname{rk}(I,F)=a+1}\mathbf{F}_{a+1-p}(\underline{0}_{(I,F)}).}
	\arrow["d^a_1", from=1-1, to=1-2]
	\arrow["{{PD^{*}}}^{-1}", from=2-1, to=1-1]
	\arrow["{PD^{*}}", from=1-2, to=2-2]
	\arrow["\varphi^a", from=2-1, to=2-2]
\end{tikzcd}\]
  Write $\varphi^\bullet \coloneq {PD^{*}} \circ d^\bullet_1 \circ {PD^{*}}^{-1}$ where ${PD^{*}}$ is the dual of the Poincaré duality map on each direct summand.
  More precisely, we apply Poincaré duality using the canonical fundamental class 
  where the top-dimensional cones carry the constant weight $1$.
  Note that this is unique up to sign. For more details on this map, see \cref{lemwedge}.
  Let $\Sigma^+_{M(J,G)}$ be of rank $a$ and 
  $\Sigma^+_{M(I,F)}$ be of rank $a+1$ such that $(J,G) \precd (I,F)$. 
  Now consider $\varphi^a$ restricted on $\mathbf{F}_{i}(\underline{0}_{(J,G)})$ projected to $\mathbf{F}_{i+1}(\underline{0}_{(I,F)})$.
  Denote this map by $\xi$.
  Since $\operatorname{rk}(I,F) - \operatorname{rk}(J,G) = 1$, either $I=J$ and $G\precd F$; 
  or
  $J = I \sqcup j  \in \mathcal{I}(M)$ for some $j\in F \setminus I$ and $F=G$.
  
  \begin{lemma}\label{lemwedge}
  The map $\xi : \mathbf{F}_{i}(\underline{0}_{(J,G)}) \to \mathbf{F}_{i+1}(\underline{0}_{(I,F)})$ sends 
  $u \mapsto \mathbf{n} \wedge u$ where 
  \[\mathbf{n} = \begin{cases}
    -e_j & \text{ if } F=G\\
    e_{F\setminus G} & \text{ if } I=J.
  \end{cases}\]  
  \end{lemma}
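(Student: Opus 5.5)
We unwind the definition of $\varphi^a = PD^*\circ d_1^a\circ {PD^*}^{-1}$ on a single pair of strata $(J,G)\precd(I,F)$ and compute it locally near the face where the two strata meet. First we make ${PD^*}^{-1}$ explicit on $\mathbf{F}_i(\underline{0}_{(J,G)})$. Since the fundamental class of $\Sigma^+_{M(J,G)}$ has weight $1$ on every top cone $\eta$, the dual Poincaré map of \cref{locmatpd} sends $\sum_\eta \alpha_\eta$ to $\pm\sum_\eta \kappa_{\alpha_\eta}(\nu_\eta)$. Hence a representative of ${PD^*}^{-1}(u)$ is a cochain $(\alpha_\eta)_\eta \in \bigoplus_{\eta}\mathbf{F}^{a-i}(\eta)$ supported on top-dimensional cones of the stratum $U_{M(J,G)}$ with $\sum_\eta\kappa_{\alpha_\eta}(\nu_\eta)=u$ (up to the global sign $(-1)^{p(a-p)}$). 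In rank $a$ we have $a-i=p$.

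Next we compute $d_1$ on such a cochain. By the proof of \cref{rankspprop}, $d_1$ is induced by the part of the cochain differential $d$ of $C^{p,\bullet}$ that raises rank. Concretely, for each top cone $\eta$ of $U_{M(J,G)}$ and each $(a+1)$-dimensional face $\delta$ of $\mathcal{Y}_M$ in the stratum $U_{M(I,F)}$ with $\eta\prec\delta$, the contribution is $\operatorname{sgn}(\eta,\delta)\, i^*_{\eta\prec\delta}(\alpha_\eta)$. Here $i_{\eta\prec\delta}$ is induced by the projection $\pi^{\operatorname{sed}(\eta)}_{\operatorname{sed}(\delta)}$. From \cref{rankstrat} (and the cone description $\pi_{J,K}(\sigma_{I,\mathcal{F}})$ in its proof) the faces $\delta$ have the form $\pi(\sigma)$, where the extra ray direction leaving $O(J,E\setminus G)$ towards $O(I,E\setminus F)$ is:
\begin{itemize}
\item $e_j$ (pointing towards the stratum from the sedentary side, i.e.\ the outward normal from $\eta$ is $-e_j$) in the case $J=I\sqcup j$, $F=G$;
\item $-e_{F\setminus G}$ in the case $I=J$, $G\precd F$. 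In $O(I,E\setminus F)$ this is the image of the ray $-e_{E\setminus G}$, whose coordinates on $E\setminus F$ are sent to infinity, and the outward normal from $\eta$ is $+e_{F\setminus G}$.
\end{itemize}
Thus $\delta\cap O(\operatorname{sed}\eta)$ is spanned by $\eta$ together with this normal $\mathbf{n}$, and the sign convention for $\operatorname{sgn}(\eta,\delta)$, namely wedging the outward normal on the left, is exactly what produces $\mathbf{n}\wedge-$.

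Finally we apply $PD^*$ on the rank $a+1$ stratum. Using $\nu_\delta=\mathbf{n}\wedge\nu_\eta$ (with $\nu_\delta$ of weight $1$), the compatibility of contraction with wedging gives
\[\kappa_{i^*\alpha_\eta}(\mathbf{n}\wedge\nu_\eta)=\mathbf{n}\wedge\kappa_{\alpha_\eta}(\nu_\eta)\]
up to a sign depending only on degrees. This identity holds because $\alpha_\eta$ is pulled back along the projection that kills $\mathbf{n}$, so $\alpha_\eta$ vanishes on $\mathbf{n}$. Summing over $\eta$ and $\delta$ then yields $\mathbf{n}\wedge u$. We must check that every $\delta$ adjacent to the stratum arises from exactly one $\eta$, which is the bijection of compatible pairs in \cref{rankstrat}.

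The main obstacle is bookkeeping: we must verify that the degree-dependent signs, namely $(-1)^{p(d-p)}$ in the two $PD^*$ maps and the contraction/wedge commutation sign, cancel uniformly. We must also check that the identification of $\mathbf{F}_i(\underline{0})$ across strata via projection is consistent, so that $\mathbf{n}$ is a well-defined vector of $O(I,E\setminus F)$ independent of $\eta$. If residual global signs remain, we absorb them into the choice of orientations, since the statement only fixes $\mathbf{n}$ up to the orientation convention.
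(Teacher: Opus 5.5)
Your proposal is correct and follows essentially the same route as the paper. Both arguments represent ${PD^*}^{-1}(u)$ by a cochain on top cones and take the rank-raising part of $d$. Both then combine $\nu_\sigma=\operatorname{sgn}(\tau,\sigma)\,\mathbf{n}\wedge\nu_\tau$ with $\kappa_{\alpha\circ\pi}(\mathbf{n}\wedge\nu_\tau)=(-1)^p\,\mathbf{n}\wedge\kappa_\alpha(\nu_\tau)$, which holds because $\alpha\circ\pi$ kills $\mathbf{n}$. Your check that each top cone $\tau$ lies in exactly one $\sigma$ of the adjacent stratum is a useful detail that the paper leaves implicit.

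The sign bookkeeping you defer closes exactly, so no absorption into orientation choices is needed:
\begin{itemize}
\item $\operatorname{sgn}(\tau,\sigma)$ enters once in $d$ and once in $\nu_\sigma$, so it squares to $1$.
\item The two Poincar\'e normalisations differ by $(-1)^{p(a+1-p)}/(-1)^{p(a-p)}=(-1)^p$, which cancels the $(-1)^p$ coming from the contraction identity.
\end{itemize}
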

  \begin{proof}
    First, since $d : C^{p,q} \to C^{p,q+1}$ splits 
    by preserving rank or increasing rank, $d_1$ in $(_{p}E_1^{\bullet,0},d_1)$ 
    is induced by $d$ that increases rank. Since ${}_pE^{a,b}_1$ is trivial for $b \neq 0$, 
    $d_1$ is nothing but a restriction of $d : C^{p,q} \to C^{p,q+1}$ that increases rank. 
    Hence, $d_1$ is a restriction of 
    \[\bigoplus_{\substack{|\tau|=a+b\\ \operatorname{rk}(\tau)=a}}\mathbf{F}^p(\tau) \to \bigoplus_{\substack{|\sigma|=a+b+1\\ \operatorname{rk}(\sigma)=a+1\\ \tau \prec \!\cdot \sigma}}\mathbf{F}^p(\sigma) .\]
    We may fix an integral generator of $\bigwedge^{|\tau|}\langle \tau \rangle $ by $\nu_\tau$  which is unique up to sign.
    Let $\operatorname{sgn}(\tau,\sigma) \in \{-1,+1\}$ be the sign of $\tau \precd \sigma$ defined by  
    $\nu_\sigma = {\operatorname{sgn}(\tau,\sigma)} \mathbf{n}\wedge \nu_\tau$ where $\mathbf{n}$ is defined by each stratum of $\tau$ and $\sigma$ as above.
    Recall that the dual of the Poincaré duality map is the dual of $\alpha \mapsto \sum_\tau\kappa_{\alpha}(\nu_\tau)$.
    This map sends $\sum_\tau \alpha_\tau \mapsto \lambda \sum_\tau \kappa_{\alpha_\tau}(\nu_\tau) \in \mathbf{F}_{a-p}(\underline{0}_{(J,G)})$
    for $\lambda = (-1)^{p(a-p)}$.

    Then we get the following diagram:
    \[
\begin{tikzcd}
  {\sum_\tau\alpha_\tau \in}
  &[-15mm]
  \begin{array}{c}
    \bigoplus_{\substack{|\tau|=a+b\\ \operatorname{rk}(\tau)=a\\ \tau \in \Sigma^+_{M(J,G)}}}\mathbf{F}^p(\tau)
  \end{array}
  &
  \begin{array}{c}
    \bigoplus_{\substack{|\sigma|=a+b+1\\ \operatorname{rk}(\sigma)=a+1\\ \sigma \in \Sigma^+_{M(I,F)}}}\mathbf{F}^p(\sigma)
  \end{array}
  &[-15mm]
  {\ni \sum_\sigma {\operatorname{sgn}(\tau,\sigma)}\alpha_\tau \circ \pi_{\tau \prec \!\cdot \sigma}}
  \\
  {\lambda\sum_\tau\kappa_{\alpha_\tau}(\nu_\tau) \in}
  &[-15mm]
  {\mathbf{F}_{a-p}(\underline{0}_{(J,G)})}
  &
  {\mathbf{F}_{a+1-p}(\underline{0}_{(I,F)})}
  &[-15mm]
  {\ni \lambda(-1)^p \sum_\sigma {\operatorname{sgn}(\tau,\sigma)} \kappa_{\alpha_\tau \circ \pi_{\tau \prec \!\cdot \sigma}}(\nu_\sigma)}
  \arrow[maps to, from=1-1, to=1-4, bend left=30]
  \arrow[maps to, from=1-1, to=2-1]
  \arrow[from=1-2, to=1-3]
  \arrow[from=1-2, to=2-2]
  \arrow[from=1-3, to=2-3]
  \arrow[maps to, from=1-4, to=2-4]
  \arrow[maps to, from=2-1, to=2-4, bend right=15]
  \arrow[from=2-2, to=2-3]
\end{tikzcd}
\]
where $\pi_{{\tau \prec \!\cdot \sigma}} : \mathbf{F}_p(\sigma) \twoheadrightarrow \mathbf{F}_p(\tau)$ 
is the projection induced by $\pi^{(J,G)}_{(I,F)}$ and $\kappa_\alpha(\nu)$ is the contraction of $\alpha$ by $\nu$.
Then $\xi$ is the map as desired if the diagram commutes, i.e.,  
$\kappa_{\alpha_\tau \circ \pi_{\tau \prec \!\cdot \sigma}}(\nu_\sigma) = (-1)^p \mathbf{n}\wedge \kappa_{\alpha_\tau}(\nu_\tau)$ holds.
Note that 
$\pi^{(J,G)}_{(I,F)}: \langle \sigma \rangle \twoheadrightarrow \langle \tau \rangle $ sends $\mathbf{n}$ to $0$.
Hence, we get 
\[\kappa_{\alpha_\tau \circ \pi_{\tau \prec \!\cdot \sigma}}(\mathbf{n}\wedge\nu_\tau)  
= (-1)^p \mathbf{n}\wedge \kappa_{\alpha_\tau}(\nu_\tau)\]
which completes the proof.
\end{proof}

We have the following cochain complex
  \[
  {\cdots}
  \!\to \!
	{\displaystyle\bigoplus_{\operatorname{rk}(I,F)=a-1}\mathbf{F}_{a-p-1}(\underline{0}_{(I,F)})}
  \!\xrightarrow{\varphi}
  {\displaystyle\bigoplus_{\operatorname{rk}(I,F)=a}\mathbf{F}_{a-p}(\underline{0}_{(I,F)})} 
  \!\xrightarrow{\varphi}
  {\displaystyle\bigoplus_{\operatorname{rk}(I,F)=a+1}\mathbf{F}_{a-p+1}(\underline{0}_{(I,F)})}
  \!\to
  {\cdots}
	\]
which is isomorphic to ${}_pE_1^{\bullet,0}$ by Poincar\'e duality described in \cref{lemwedge}.
Denote this cochain complex by ${}_pD^\bullet$.
We introduce an additional filtration $W^\bullet$ on ${}_pD^\bullet$ as follows.
Let 
\[W^k {}_pD^a \coloneq {\bigoplus_{\substack{\operatorname{rk}(I,F)=a \\ {r}_M(F) \geq k}}\mathbf{F}_{a-p}(\underline{0}_{(I,F)})} \]
be the \emph{flat-rank filtration} on ${}_pD^\bullet$. 
This is indeed a descending filtration of ${}_pD^\bullet$ and hence induces a spectral sequence
${}_p{\Xi}^{\bullet,\bullet}$ where its zeroth page is given by
\[ {}_p{\Xi}_0^{k,l} = \operatorname{gr}_W^{k} {}_p D^{k+l} =  {\bigoplus_{\substack{\operatorname{rk}(I,F)=k+l \\ {r}_M(F) = k}}\mathbf{F}_{k+l-p}(\underline{0}_{(I,F)})}.\]
Then this spectral sequence abuts to $\mathcal{H}^{k+l}({}_p D^\bullet) = \operatorname{H}^{p,k+l}(Y_M)$.

\begin{proposition}\label{rankofflatfiltr}
 ${}_p {\Xi}^{\bullet,\bullet}$ degenerates at ${}_p {\Xi}_1$ and moreover
 \[{}_p {\Xi}_1^{k,l} \simeq \begin{cases} 
  \mathbb{R}^{W_p(M)}  & \text{ if } k=p, \, l=0\\
  0 & \text{ else.}
 \end{cases}\] 
\end{proposition}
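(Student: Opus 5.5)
The plan is to identify the zeroth differential of ${}_p\Xi$ explicitly, find a combinatorial basis in which it becomes a direct sum of augmented simplicial cochain complexes of simplices, and read off ${}_p\Xi_1$. Degeneration at ${}_p\Xi_1$ will then be automatic.

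\textbf{Step 1: the differential $d_0$.} The differential $d_0$ on ${}_p\Xi_0$ is the part of $\varphi$ that preserves $r_M(F)$. By the case analysis before \cref{lemwedge}, a cover $(J,G)\precd(I,F)$ with $r_M(G)=r_M(F)$ must have $F=G$ and $J=I\sqcup j$. So ${}_p\Xi_0$ splits as a direct sum over flats $F$ with $r_M(F)=k$ of complexes
\[
K_F^\bullet:\qquad \bigoplus_{\substack{I\subseteq F\\ I\in\mathcal I(M)}}\mathbf{F}_{k-|I|-p}(\underline{0}_{(I,F)}).
\]
By \cref{lemwedge}, the differential sends the $(I\sqcup j,F)$-summand to the $(I,F)$-summand by $u\mapsto -e_j\wedge u$. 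It therefore suffices to show that $K_F^\bullet$ is acyclic when $k>p$, and has cohomology $\mathbb R$ in degree $l=0$ when $k=p$.

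\textbf{Step 2: a basis of each summand.} For a matroid $N$ on a ground set $S_0$, I claim that $\mathbf{F}_q(\underline{0}_{\Sigma^+_N})\subseteq\bigwedge^q\mathbb R^{S_0}$ has basis
\[
\{e_{s_1}\wedge\dots\wedge e_{s_q} \,\vert\, \{s_1,\dots,s_q\}\in\mathcal I(N)\}.
\]
Each such vector lies in $\bigwedge^q\langle\sigma_{S,\varnothing}\rangle$ for the cone $\sigma_{S,\varnothing}\in\Sigma^+_N$, hence lies in $\mathbf F_q$. These vectors are linearly independent, being wedges of distinct coordinate subsets. Their number is $f^q_N$, which equals $\dim\mathbf F_q$ by the computation in the proof of \cref{rankspprop} (using \cite{Zharkov2013} and \cref{coextf}). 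Since the orbit coordinates $e_i$, $i\in F\setminus I$, are compatible with the projections $\pi$, a basis of the $(I,F)$-summand of $K_F^\bullet$ is indexed by sets $S\subseteq F\setminus I$ with $I\sqcup S\in\mathcal I(M)$ and $|S|=k-|I|-p$.

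\textbf{Step 3: reindexing.} Put $J=I\sqcup S$, an independent subset of $F$ with $|J|=k-p$ fixed. The differential sends $(I,S)$ to $\sum_{j\in I}\pm(I\setminus j,\,S\cup j)$, so it preserves $J$. Hence
\[
K_F^\bullet\simeq\bigoplus_{\substack{J\subseteq F,\ J\in\mathcal I(M)\\ |J|=k-p}}C^\bullet(J),
\]
where $C^\bullet(J)$ has basis the subsets $I\subseteq J$ and differential removing one element with a sign. Up to the signs coming from reordering wedges and the $-e_j$, this is the augmented simplicial (co)chain complex of the full simplex on $J$.

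\textbf{Step 4: conclusion.} The complex $C^\bullet(J)$ is acyclic whenever $J\neq\varnothing$, mirroring the $(1-1)^{|J|}$ computation in \cref{whitprop}. When $J=\varnothing$, i.e.\ $k=p$, it is a single copy of $\mathbb R$, sitting at $I=\varnothing$, so $l=k+l-k=|I|=0$. Summing over the $W_p(M)$ flats of rank $p$ gives ${}_p\Xi_1^{p,0}\simeq\mathbb R^{W_p(M)}$ and ${}_p\Xi_1^{k,l}=0$ otherwise. Since ${}_p\Xi_1$ is supported in the single bidegree $(p,0)$, every higher differential $d_r$ for $r\ge1$ vanishes, so the spectral sequence degenerates at ${}_p\Xi_1$.

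\textbf{Main obstacle.} The delicate point is the sign bookkeeping in Step 3. One must check that $u\mapsto -e_j\wedge u$, written in the wedge basis of Step 2, really is a simplicial coboundary: each coefficient must be $\pm1$, and $d_0^2=0$ must hold in the reindexed form. A second check is that $e_j$ in orbit coordinates matches the coordinate basis used in Step 2. I would fix a total order on $E$, write $e_J=e_{j_1}\wedge\cdots\wedge e_{j_m}$ in increasing order, and verify that $e_j\wedge e_S=\pm e_{S\cup j}$ with the standard alternating sign. Acyclicity is unaffected by a consistent rescaling of the basis vectors by $\pm1$, so this suffices.
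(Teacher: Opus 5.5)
Your proposal is correct and is essentially the paper's proof: the paper uses the same wedge basis $w_S$ indexed by independent sets, the same splitting of each flat's complex into blocks labelled by $J=I\sqcup S$, and the same acyclicity of each block for $J\neq\varnothing$. The only differences are cosmetic. The paper avoids your sign bookkeeping by observing that $e_j\wedge w_S=0$ for $j\in S$, so on each $J$-block the differential is simply wedging with the single vector $-\sum_{j\in J}e_j$, which makes the block a Koszul complex on $\mathbb{R}^J$. Also, $l=-|I|$ rather than $|I|$, which is harmless at $I=\varnothing$.
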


\begin{corollary}\label{dimeq}
  $\dim \operatorname{H}^{p,p}(Y_M) = W_p(M)=\dim \operatorname{B}^p(M)$.
\end{corollary}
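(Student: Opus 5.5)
The plan is to read off the corollary from \cref{rankofflatfiltr} by way of the identification of ${}_pD^\bullet$ with ${}_pE_1^{\bullet,0}$, and then to compare with the definition of $\operatorname{B}^\bullet(M)$. First, recall the chain of identifications. By \cref{rankspprop}\ref{a}, the rank spectral sequence is concentrated in the row $b=0$ and degenerates at ${}_pE_2$. Hence $\operatorname{H}^{p,a}(Y_M)\simeq {}_pE_2^{a,0}=\mathcal{H}^a({}_pE_1^{\bullet,0})$. By \cref{lemwedge} and the dual Poincar\'e duality isomorphisms $PD^*$ on each summand, the complex $({}_pE_1^{\bullet,0},d_1)$ is isomorphic to $({}_pD^\bullet,\varphi)$. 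Therefore $\operatorname{H}^{p,a}(Y_M)\simeq \mathcal{H}^a({}_pD^\bullet)$ for every $a$.

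Second, I apply the flat-rank filtration $W^\bullet$. It is a finite descending filtration of the bounded complex ${}_pD^\bullet$, since $0\le r_M(F)\le d$. So the spectral sequence ${}_p\Xi$ converges: $\mathcal{H}^n({}_pD^\bullet)$ carries a finite filtration whose graded pieces are ${}_p\Xi_\infty^{k,l}$ with $k+l=n$. By \cref{rankofflatfiltr}, ${}_p\Xi$ degenerates at ${}_p\Xi_1$, so ${}_p\Xi_\infty={}_p\Xi_1$. This page is zero except at $(k,l)=(p,0)$, where it is $\mathbb{R}^{W_p(M)}$. Consequently $\mathcal{H}^n({}_pD^\bullet)=0$ for $n\neq p$, which recovers \cref{thm1}. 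For $n=p$ the filtration has a single nonzero graded piece, so $\dim \mathcal{H}^p({}_pD^\bullet)=W_p(M)$. Combining with the first step gives $\dim\operatorname{H}^{p,p}(Y_M)=W_p(M)$.

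As a consistency check, which also gives an independent route once vanishing off the diagonal is known, I will use the Euler characteristic. The Euler characteristic of a finite complex equals that of its cohomology, so \cref{rankspprop}\ref{b} gives
\[\sum_a(-1)^{a-p}\dim\operatorname{H}^{p,a}(Y_M)=W_p(M).\]
When only $a=p$ contributes, this is exactly $\dim\operatorname{H}^{p,p}(Y_M)=W_p(M)$.

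Finally, by definition $\operatorname{B}^p(M)$ has basis $\{y_F : F\in L(M),\ r_M(F)=p\}$, so $\dim\operatorname{B}^p(M)=W_p(M)$. There is no real obstacle here, since all the work is in \cref{rankofflatfiltr}. The only point needing care is that the convergence and degeneration statements are applied to a bounded filtration, so that the abutment $\mathcal{H}^{k+l}({}_pD^\bullet)=\operatorname{H}^{p,k+l}(Y_M)$ is determined up to dimension by ${}_p\Xi_1$.
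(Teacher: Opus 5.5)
Your proposal is correct and is essentially the paper's argument. The paper's proof is the single line that the corollary follows from \cref{rankofflatfiltr}, and you have written out the steps left implicit there: identifying $\operatorname{H}^{p,\bullet}(Y_M)$ with $\mathcal{H}^\bullet({}_pD^\bullet)$ via \cref{rankspprop} and \cref{lemwedge}, using convergence of the finite flat-rank filtration with ${}_p\Xi_1$ concentrated at $(p,0)$, and reading off $\dim\operatorname{B}^p(M)=W_p(M)$ from the basis $\{y_F\}$. Your Euler-characteristic remark is a valid consistency check, but, as you say, it still needs the vanishing for $q>p$, which itself comes from \cref{rankofflatfiltr}.
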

\begin{proof}
 This directly follows from \cref{rankofflatfiltr}.
\end{proof}

\begin{proof}[Proof of \cref{rankofflatfiltr}]
  For $\mathbf{F}_{k+l-p}(\underline{0}_{(I,F)})$, let $S \subseteq F \setminus I$ such that $S \sqcup I \in \mathcal{I}(M)$
  and $|S| = k+l-p$. Note that
  \[\dim \mathbf{F}_{k+l-p}(\underline{0}_{(I,F)}) = 
  f^{k+l-p}_{M(I,F)} = \#\{S \subseteq F \setminus I \,\vert \, I\sqcup S \in {\mathcal{I}(M)}, |S|=k+l-p \}.\]
  In particular, $S \in \mathcal{I}(M(I,F))$ and hence $\sigma_{S,\varnothing} \in \Sigma^+_{M(I,F)}$.
  This implies that 
  \[w_S \coloneq \bigwedge_{s\in S} e_s \in \mathbf{F}_{k+l-p}(\underline{0}_{(I,F)}) \subseteq \bigwedge^{k+l-p} O(I,E\setminus F)\]
  where the wedge of $e_s$ for $s\in S$ is defined by fixing a total order on $E$.
  Clearly 
  $(w_S \, \vert \, S \in \mathcal{I}(M(I,F)), \,  |S|=k+l-p)$ 
  is a linearly independent family in $\mathbf{F}_{k+l-p}(\underline{0}_{(I,F)})$.
  This shows that $(w_S)$ is a basis of $\mathbf{F}_{k+l-p}(\underline{0}_{(I,F)})$.
  Let $\mathbf{F}_{k+l-p}(\underline{0}_{(I,F)})^{(J)} \coloneq \langle w_S \rangle $
  be the $1$-dimensional subspace spanned by $w_S$
  where $I \sqcup S =J$. Note that this subspace does not depend on the choice of 
  the order of $E$.
  This allows us to decompose $\mathbf{F}_{k+l-p}(\underline{0}_{(I,F)})$ labelled by $J$, that is,
  \[\mathbf{F}_{k+l-p}(\underline{0}_{(I,F)}) = \bigoplus_{J} \mathbf{F}_{k+l-p}(\underline{0}_{(I,F)})^{(J)}.\]
  
  Now fix a flat $F$ of rank $k$. Let $(J,F)$ be an admissible pair such that $|J|=k-p$.
  Let $J_i \subseteq J$ such that $|J_i| = i$.
  Consider the following cochain complex

  \begin{align*}
    0 \to {} &
    \mathbf{F}_0(\underline{0}_{(J,F)})^{(J)}
    \to
    \bigoplus_{J_1}\mathbf{F}_1(\underline{0}_{(J\setminus J_1,F)})^{(J)}
    \to \cdots \\
    & \cdots \to
    \bigoplus_{J_i}\mathbf{F}_i(\underline{0}_{(J\setminus J_i,F)})^{(J)}
    \to \cdots \to
    \mathbf{F}_{|J|}(\underline{0}_{(\varnothing,F)})^{(J)}
    \to 0.
  \end{align*}

  where the differential map is $d = -e_J \wedge -$ with a slight abuse of notation. 
  Denote this complex as ${}_p D^\bullet(J,F)$ where the cohomological degree of $\mathbf{F}_0(\underline{0}_{(J,F)})$ is $p$.
  This complex is isomorphic to the following Koszul complex
  \[ 0 \to \bigwedge^0 V\to \bigwedge^1 V \to \bigwedge^2 V \to \cdots \to \bigwedge^{|J|} V \to 0 \]
  with the differential map $d = v \wedge -$ for $V = \mathbb{R}^J$ and $v = -e_J = -\sum_{j\in J} e_j$.
  Note that the Koszul complex is acyclic if and only if $v$ is non-zero.
  Then, by \cref{lemwedge},
  \[{}_p {\Xi}_0^{k,\bullet} = \bigoplus_{\substack{(J,F) \text{ admissible }\\{r}_M(F)= k, |J| = k-p}} {}_p D^{k+\bullet} (J,F),\] 
  and each ${}_p D^\bullet (J,F)$ is acyclic for all $J$ if and only if  
   $J \neq \varnothing$.
  Hence 
  \[{}_p {\Xi}_1^{k,l} = \begin{cases} 
    \displaystyle\bigoplus_{{r}_M(F) = k} \mathbf{F}_0(\underline{0}_{(\varnothing,F)})  & \text{ if } (k,l) = (p,0)\\
    0 & \text{ else}
 \end{cases}\] 
 which completes the proof.
\end{proof}

\begin{proof}[Proof of \cref{thm1}]
  By \cref{rankofflatfiltr}, 
  $\operatorname{H}^{p,k+l} (Y_M)= {}_p {\Xi}_1^{k,l}$ and
  ${}_p {\Xi}_1$ is concentrated in $k=p, l=0$ which implies that
  $\operatorname{H}^{p,k+l}(Y_M)$ is concentrated in bidegrees $k=p, l=0$ i.e., in $(p,p)$-degrees.
\end{proof}

\begin{remark}
  The proof of \cref{whitprop} can be viewed as a combinatorial shadow of the acyclic Koszul complexes that appear in the proof of \cref{rankofflatfiltr}.
More precisely, the vanishing of the alternating sum for $J\neq\varnothing$ in the proof of \cref{whitprop} mirrors the exactness of the complex ${}_pD^\bullet(J,F)$ for $J\neq\varnothing$ in the proof of \cref{rankofflatfiltr}.
\end{remark}

\section{Pullback to the canonical compactification of $\Sigma_M^+$}\label{pullbackM}
In this section, we study a tropical analogue of successive toric blow-ups of 
$(\mathbb{TP}^1)^E$ to prove the injectivity in \cref{propinc}:
\[
\Phi: \operatorname{H}^{\bullet,\bullet}(Y_M)\hookrightarrow \operatorname{H}^{\bullet,\bullet}(\overline{\Sigma_M^+}).
\]
Moreover,  we show that its image is isomorphic to the graded M\"obius algebra $\operatorname{B}^\bullet(M)$, yielding an isomorphism of graded algebras.
This allows us to complete the proof of \cref{thm2}.

Let $X_E \coloneq \mathbb{TP}_{\Sigma^+_E}$ and $Y_E \coloneq \mathbb{TP}_{(\Pi^1)^E} = (\mathbb{TP}^1)^E$. 
These are canonical compactifications of $\Sigma^+_E$ and $(\Pi^1)^E$ respectively 
since the corresponding fans are complete fans. 
Moreover, $Y_E$ is the tropical matroid Schubert variety of the Boolean matroid on $E$
since the corresponding augmented Bergman fan is complete.

Note that for any matroid $M$ on $E$,
$\Sigma^+_M$ is a subfan of $\Sigma^+_E$.
Also note that $\Sigma^+_E$ can be obtained by applying stellar subdivisions from $(\Pi^1)^E$ by \cite[Proposition 3.6 (b)]{Eur2023}.
Hence we can consider the inclusion map $ \Sigma^+_E \hookrightarrow (\Pi^1)^E$. This induces a map of tropical toric varieties
$\pi' : X_E \to Y_E$.
The map $\pi'$ is a map analogous to successive toric blow-ups of toric varieties.
Denote $X_M \coloneq \overline{\Sigma_M^+}$, the canonical compactification of $\Sigma^+_M$.

\begin{proposition}\label{commdiagresol}
 The following diagram commutes
  \[\begin{tikzcd}
	{X_M} & {X_E} \\
	{Y_M} & {Y_E}
	\arrow["/"{marking, font=\tiny}, "i'" ,hook, from=1-1, to=1-2]
	\arrow[ "\pi"',from=1-1, to=2-1]
	\arrow[ "\pi'",from=1-2, to=2-2]
	\arrow["/"{marking, font=\tiny}, "i"',hook, from=2-1, to=2-2]
\end{tikzcd}\]
where $i$ and $i'$ are closed embeddings as closed subvarieties 
and $\pi$ is the restriction of $\pi'$. 
\end{proposition}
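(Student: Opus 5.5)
The plan is to prove the statement by first checking that $i'$ is a closed embedding, and then showing that $\pi'(X_M)=Y_M$. Once both hold, $\pi$ is defined as $\pi'|_{X_M}$, and commutativity is automatic.

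For $i'$: since $\Sigma^+_M$ is a subfan of $\Sigma^+_E$, each affine chart $A_\eta$ with $\eta\in\Sigma^+_M$ is also a chart of $X_E$. The closure $\overline{\eta}$ of each cone is therefore the same whether it is taken in $\mathbb{TP}_{\Sigma^+_M}$ or in $X_E$. Hence $X_M=\bigcup_{\eta\in\Sigma^+_M}\overline{\eta}$ is the closure of $|\Sigma^+_M|$ inside $X_E$. As a finite union of compact sets it is closed in $X_E$, so $i'$ is a closed embedding of extended polyhedral spaces. That $i$ is a closed embedding holds by definition, since $Y_M$ is a closure in $Y_E$.

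Next, $\pi'$ is the toric morphism induced by the identity of $\mathbb{R}^E$, which is compatible with the refinement $\Sigma^+_E\to(\Pi^1)^E$ by \cite[Proposition 3.6 (b)]{Eur2023}. Concretely, for $\sigma\in\Sigma^+_E$, let $\eta\in(\Pi^1)^E$ be the smallest cone containing $\sigma$. Then $\pi'$ maps $A_\sigma$ to $A_\eta$ and restricts to the identity on the dense torus $O(\underline{0})=\mathbb{R}^E$. Continuity of $\pi'$ gives
\[
\pi'(X_M)=\pi'\bigl(\overline{U_M}\bigr)\subseteq \overline{\pi'(U_M)}=\overline{U_M}=Y_M .
\]
For the reverse inclusion, $X_M$ is compact and $\pi'$ is continuous, so $\pi'(X_M)$ is compact and hence closed in the Hausdorff space $Y_E$. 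It contains $U_M$, so it contains $Y_M$. Thus $\pi'(X_M)=Y_M$, the restriction $\pi$ is a well-defined surjective morphism, and the square commutes because both composites equal $\pi'\circ i'$ by construction.

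The main obstacle is making the first paragraph rigorous at points at infinity. One must check that $\pi'$ is genuinely a morphism of extended polyhedral spaces, sending the orbit $O(\sigma)$ into $O(\eta)$ via the projection $\pi^\eta_\sigma$-type maps. One must also check that the closure of a cone of $\Sigma^+_M$ in $X_E$ matches its closure in its own tropical toric variety. I would verify both chart by chart: on $A_\sigma=\mathbb{T}^{|\sigma|}\times\mathbb{R}^{n-|\sigma|}$, the map $\pi'$ is given by the linear map expressing the rays of $\sigma$ as nonnegative combinations of the rays of $\eta$, which extends continuously to $\mathbb{T}$-coordinates. As a consistency check, I would compare orbit by orbit with \cref{rankstrat}. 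The orbit $O(\sigma_{I,\mathcal{F}})\cap X_M$ should map onto $U_{M(J,F)}$, where $J=I$ and $F$ is the minimal flat of $\mathcal{F}$, matching the stratification of \cref{stratYM}.
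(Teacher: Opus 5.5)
Your proposal is correct and follows essentially the paper's argument: both prove $\pi'(X_M)=Y_M$ from two facts. First, $\pi'$ is continuous and is the identity on $\mathbb{R}^E\supseteq U_M$. Second, the image of $X_M$ is closed in $Y_E$. The paper gets closedness from properness of $\pi'$ and also cites surjectivity, which is not actually needed; you use compactness of $X_M$ directly, and you additionally justify that $X_M$ is the closure of $U_M$ in $X_E$, which the paper takes for granted.
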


\begin{proof}
  We show that $\pi'(X_M) = Y_M \subseteq Y_E$.
  First note that $\pi'$ is an isomorphism on $U$ for
  $U \coloneq \pi'^{-1}(U_M)$. Then $X_M$ is the closure of $U$ in $X_E$.
  If $\pi'$ is surjective and closed, then $\pi'$ commutes with taking closure and hence we are done.
  The map $\pi'$ is proper and hence closed since $X_E$ is compact and Hausdorff. 
  Surjectivity can be shown by noting that subdivision of a fan gives a surjective morphism of toric varieties.
\end{proof}

\begin{proof}[Proof of \cref{propinc,thm2}]
  Consider the following commuting diagram of bigraded algebras
\[\begin{tikzcd}
	{\operatorname{H}^{\bullet,\bullet}(X_M)} & {\operatorname{H}^{\bullet,\bullet}(X_E)} \\
	{\operatorname{H}^{\bullet,\bullet}(Y_M)} & {\operatorname{H}^{\bullet,\bullet}(Y_E)}
	\arrow["{i'^*}"', from=1-2, to=1-1]
	\arrow["{\pi^*}", from=2-1, to=1-1]
	\arrow["{\pi'^*}"', from=2-2, to=1-2]
	\arrow["{i^*}", from=2-2, to=2-1]
\end{tikzcd}\]  
induced by \cref{commdiagresol}. 

Theorem 1.3 in \cite{Amini2024c} states that
for a simplicial fan $\Sigma$ satisfying tropical Poincar\'e duality,
the tropical cohomology of its canonical compactification is concentrated in $(p,p)$-degrees;
and moreover there is a canonical isomorphism 
$\operatorname{H}^{\bullet,\bullet}(\overline{\Sigma}) \simeq \operatorname{A}^\bullet (\Sigma)$
where $\operatorname{A}^\bullet(\Sigma)$ is the Chow ring of the fan $\Sigma$. 

Note that $X_M$, $X_E$, and $Y_E$ are canonical compactifications of fans $\Sigma^+_M$, $\Sigma^+_E$, and $(\Pi^1)^E$
and hence
the tropical cohomology rings of $X_M$, $X_E$, and $Y_E$ 
can be identified as 
the Chow rings of toric varieties of $\Sigma^+_M$, $\Sigma^+_E$, and $(\Pi^1)^E$.
Moreover, for the toric morphisms appearing in our diagram 
(induced by refinements and inclusions of subfans), this identification is compatible with pullback; this follows 
by tracing the cochain-level description of $\Psi$ in \cite[Section 4.2]{Amini2024c}.

Hence the diagram can be understood in terms of Chow ring of fans as follows:
\[\begin{tikzcd}
	{\operatorname{A}^{\bullet}(\Sigma_M^+)} & {\operatorname{A}^{\bullet}(\Sigma_E^+)} \\
	{\operatorname{H}^{\bullet,\bullet}(Y_M)} & {\operatorname{A}^{\bullet}((\Pi^1)^E)}.
	\arrow["{i'^*}"', from=1-2, to=1-1]
	\arrow["{\pi^*}", from=2-1, to=1-1]
	\arrow["{\pi'^*}"', from=2-2, to=1-2]
	\arrow["{i^*}", from=2-2, to=2-1]
\end{tikzcd}\]

Note that 
\[{\operatorname{A}^{\bullet}((\Pi^1)^E)} = {\operatorname{A}^{\bullet}((\mathbb{P}^1_\mathbb{C})^E)} \simeq \mathbb{R}[y_1, \dots , y_n]/(y_i^2 \,\vert \, i \in E)\] 
is generated by 
$y_i \in {\operatorname{A}^{1}((\mathbb{P}^1_\mathbb{C})^E)}$  where $y_i$ is 
the  pullback of the hyperplane class for the $i$-th $\mathbb{P}^1_\mathbb{C}$. Now consider a subdiagram generated by $y_i$'s:

\[\begin{tikzcd}
	{\operatorname{B}^{\bullet}(M)} & {S^\bullet} \\
	{R^\bullet} & {\operatorname{A}^{\bullet}((\Pi^1)^E)}
	\arrow[two heads, from=1-2, to=1-1]
	\arrow[two heads, from=2-1, to=1-1]
	\arrow["\simeq"', from=2-2, to=1-2]
	\arrow[two heads, from=2-2, to=2-1]
\end{tikzcd}\]
where $R^\bullet$ and $S^\bullet$ are images of $i^*$ and $\pi'^*$ respectively.
Then since $\pi'$ is a sequence of blow-ups, 
the pullback map $\pi'^*$ on Chow rings is injective (see e.g., \cite[Section 6.7]{Fulton1984}).
Hence ${\operatorname{A}^{\bullet}((\Pi^1)^E)} \to S^\bullet$ is an isomorphism.
By \cite[Proposition 2.15]{MR4477425}, the subalgebra generated by the pullback of $y_i$'s
in $\operatorname{A}^{\bullet}(\Sigma^+_M)$ 
is the graded M\"obius algebra $\operatorname{B}^\bullet (M)$. 
Since $R^\bullet \to \operatorname{B}^\bullet (M)$ is surjective, 
the map $\pi^* : {\operatorname{H}^{\bullet,\bullet}(Y_M)} \to \operatorname{B}^\bullet (M)$ is an isomorphism of graded algebras
provided that \[\dim \operatorname{H}^{p,p}(Y_M) =\dim \operatorname{B}^p(M)\]
for all $p$. Then \cref{dimeq} completes the proof of \cref{propinc} by noting $\Phi = \pi^*$.
This proves \cref{thm2}.
\end{proof}

\section{Comparison with arrangement Schubert varieties}\label{comparisonsec}
In this section we compare the tropical matroid Schubert variety $Y_M$ with the arrangement Schubert variety
$Y_{\mathcal A}$ associated to a realisation $\mathcal A$ of $M$.
We consider extended tropicalisation over the trivially valued field $\mathbb{C}$ (in the sense of \cite{Kajiwara2008, Payne2009}).
Using extended tropicalisation, we show that the extended tropicalisation
of $Y_{\mathcal A}$ coincides with $Y_M$.

We first recall the definition of the arrangement Schubert variety $Y_{\mathcal{A}}$ of a hyperplane arrangement $\mathcal{A}$
which was first studied in \cite{Ardila2016}.
For simplicity, we assume that the arrangement is over $\mathbb{C}$.
Let $V$ be a $d$-dimensional $\mathbb{C}$-vector space. 
Let $E \subseteq V$ be a spanning family of cardinality $n$.
This induces an essential arrangement of hyperplanes $\mathcal{A}$ in $V^*$.
Let $i: V^* \to \mathbb{C}^E$ be $\ell \mapsto (\ell(e))_{e\in E}$.

\begin{lemma}
 The map $i$ is an embedding. 
\end{lemma}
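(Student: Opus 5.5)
We need to show that $i: V^* \to \mathbb{C}^E$, $\ell \mapsto (\ell(e))_{e\in E}$, is an embedding, i.e.\ an injective linear map. Since $i$ is linear, it is then automatically a closed embedding of algebraic varieties (an isomorphism onto the linear subspace $i(V^*)$).

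First, linearity. Each coordinate $\ell \mapsto \ell(e)$ is evaluation at the fixed vector $e \in V$. Evaluation is linear in $\ell$, so $i$ is a linear map of finite-dimensional $\mathbb{C}$-vector spaces.

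Second, injectivity, which is where the hypothesis that $E$ spans $V$ is used. Suppose $i(\ell)=0$, so $\ell(e)=0$ for every $e\in E$. Any $v\in V$ can be written as $v=\sum_{e\in E} c_e e$, because $E$ spans $V$. By linearity,
\[
\ell(v)=\sum_{e\in E} c_e\,\ell(e)=0 .
\]
Hence $\ell=0$, so $\ker i = 0$. Equivalently, the dual map $\mathbb{C}^{E*}\to V$ sending the basis vector $e^*$ to $e$ is surjective, and the dual of a surjection is an injection.

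Finally, an injective linear map $V^*\to\mathbb{C}^E$ identifies $V^*$ with the $d$-dimensional linear subspace $i(V^*)$, which is Zariski closed. Its inverse on the image is linear, hence a morphism. So $i$ is a closed embedding, and also a homeomorphism onto its image in the analytic topology.

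There is no real obstacle here. The only point needing care is that the spanning hypothesis on $E$ gives injectivity; this corresponds to $\mathcal{A}$ being essential.
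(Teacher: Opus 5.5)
Your proposal is correct and takes the same approach as the paper, whose proof simply says that the statement follows from $E$ spanning $V$. You have spelled out the details: linearity of $i$, triviality of its kernel via the spanning hypothesis, and why an injective linear map is a closed embedding.
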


\begin{proof}
This follows directly from the condition of $E$ spanning $V$.
\end{proof}

Consider the following inclusion $\mathbb{C} \hookrightarrow \mathbb{P}^1_\mathbb{C}$ induced by 
$\mathbb{C} \hookrightarrow \mathbb{C} \cup \{\infty\}$, $z \mapsto z$.
This induces an open immersion $j : \mathbb{C}^E \hookrightarrow (\mathbb{P}^1_\mathbb{C})^E$.

\begin{definition}
 The \emph{arrangement Schubert variety} $Y_\mathcal{A}$ associated with an arrangement $\mathcal{A}$ 
 is the Zariski closure of the image of $j \circ i$ in $(\mathbb{P}^1_\mathbb{C})^E$.
\end{definition}

Let $\mathbf{a} \in \mathbb{C}^E$ be a generic element. 
The translation map
\[
+\mathbf{a} : \mathbb{C}^E \to \mathbb{C}^E, \qquad \mathbf{x} \mapsto \mathbf{x}+\mathbf{a}
\]
extends to an automorphism of $(\mathbb{P}^1_\mathbb{C})^E$. 
Hence the Zariski closure of the image of
\[
j \circ +\mathbf{a} \circ i
\]
in $(\mathbb{P}^1_\mathbb{C})^E$ is isomorphic to $Y_\mathcal{A}$. 
We fix this translated embedding of $Y_\mathcal{A}$ into $(\mathbb{P}^1_\mathbb{C})^E$.

We may therefore consider the extended tropicalisation of $Y_\mathcal{A}$ with respect to this embedding. 
The following proposition shows that it agrees with our definition of tropical matroid Schubert variety when the matroid is realisable.

\begin{proposition}\label{exttrop}
 Let $M(\mathcal{A})$ be the realisable matroid of $\mathcal{A}$. 
 Then $Y_{M(\mathcal{A})}$ is the extended tropicalisation of $Y_\mathcal{A}$ as a closed subvariety of $(\mathbb{P}^1_\mathbb{C})^E$.
\end{proposition}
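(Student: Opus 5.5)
The plan is to compute the extended tropicalisation one torus orbit at a time. For this I will use the standard description of extended tropicalisation of a closed subvariety of a toric variety (Kajiwara, Payne). Over the trivially valued field $\mathbb{C}$, the extended tropicalisation of $Y_{\mathcal A}\subseteq(\mathbb{P}^1_\mathbb{C})^E$ is the disjoint union, over torus orbits $O_{\mathbb C}(J,K)$ of $(\mathbb{P}^1_\mathbb{C})^E$, of the tropicalisations of $Y_{\mathcal A}\cap O_{\mathbb C}(J,K)$ placed in $O(J,K)$. Moreover, by Payne's theorem it equals the closure in $(\mathbb{TP}^1)^E$ of the tropicalisation of the open part $Y_{\mathcal A}\cap(\mathbb{C}^*)^E$, provided $Y_{\mathcal A}$ is the closure of that open part. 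Since $Y_M$ is by definition the closure of $U_M$ in $(\mathbb{TP}^1)^E$, it suffices to check two things:
\begin{enumerate}
\item the generic translate $\mathbf{a}+i(V^*)$ meets $(\mathbb{C}^*)^E$ in a dense open subset, whose closure in $(\mathbb{P}^1_\mathbb{C})^E$ is therefore $Y_{\mathcal A}$;
\item $\operatorname{Trop}\bigl((\mathbf{a}+i(V^*))\cap(\mathbb{C}^*)^E\bigr)=U_M=|\Sigma^+_M|$.
\end{enumerate}
The first point is immediate, since $\mathbf a$ is generic and $\mathbf a+i(V^*)$ is irreducible.

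For the second point, homogenise. The affine subspace $\mathbf a+i(V^*)\subseteq\mathbb{C}^E$ is the dehomogenisation at $x_0=1$ of the linear subspace $L=\operatorname{span}\bigl((1,\mathbf a),\{0\}\times i(V^*)\bigr)\subseteq\mathbb{C}^{\widetilde E}$. Hence its tropicalisation is the image under $\gamma$ of $\operatorname{Trop}(\mathbb{P}L\cap T)$, which by Ardila--Klivans / Sturmfels is $|\Sigma_{M(L)}|$. I would then identify the matroid $M(L)$ on $\widetilde E$ with the free coextension $\widetilde M$. Contracting $0$ (that is, intersecting with $x_0=0$) recovers $i(V^*)$, whose matroid is $M$. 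Genericity of $\mathbf a$ makes $0$ cofree: every circuit-type linear relation is as generic as possible, equivalently the dual configuration is $M^*$ with a freely added point. Combining this identification with \cref{coextaugber} gives $\operatorname{Trop}=\gamma(|\Sigma_{\widetilde M}|)=|\Sigma^+_M|=U_M$.

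The main obstacle is this identification $M(L)\simeq\widetilde M=(M^*+0)^*$. I would argue it dually. A realisation of $M^*$ is given by the kernel (Gale dual) of the configuration. Adjoining the vector $(1,\mathbf a)$ to $L$ corresponds, on the dual side, to adding one vector $\mathbf a^\perp$-type element in general position. A generic vector lies in no proper flat's span, which is exactly the free extension. One must check that "generic $\mathbf a$" avoids the finitely many proper subspaces spanned by flats of $M^*$, which holds for $\mathbf a$ outside a finite union of proper subspaces. Alternatively, one could avoid the matroid identification and verify orbitwise, using \cref{rankstrat}. For each admissible pair $(J,F)$, the intersection of $Y_{\mathcal A}$ with $O_{\mathbb C}(J,E\setminus F)$ is a generic translate of a realisation of $M(J,F)$, by sending coordinates in $J$ to $\infty$ and those outside $F$ to $0$. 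Orbits not of this form meet $Y_{\mathcal A}$ trivially, and the same genericity argument applies. This second route is more laborious, so I would use it only as a consistency check.
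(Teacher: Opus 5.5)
Your argument is correct and follows the same route as the paper. Extended tropicalisation commutes with closure in the toric variety (\cite[Theorem 6.2.18]{Maclagan2015}), so everything reduces to showing that the torus part tropicalises to $U_M$. The paper cites this from \cite[Proposition 5.13]{Eur2023}; you reprove it via homogenisation, the Gale-dual identification $M(L)=\widetilde M$ for generic $\mathbf a$ (which also makes the needed genericity explicit), and \cref{coextaugber}.

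There is one slip, but only in your optional orbitwise check. With the embedding $z\mapsto z$ and the convention that yields $U_M$, the stratum $U_{M(J,F)}\subseteq O(J,E\setminus F)$ has the coordinates in $J$ equal to $0$ and those outside $F$ equal to $\infty$, not the other way round.
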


\begin{proof}
 This is a consequence of the definition of $Y_{M(\mathcal{A})}$, and
\cite[Proposition 5.13]{Eur2023}, and \cite[Theorem 6.2.18]{Maclagan2015}.
 More precisely, \cite[Proposition 5.13]{Eur2023} states that the tropicalisation of $j \circ+\mathbf{a} \circ i (V^*)$ by the maximal torus of $(\mathbb{P}^1_{\mathbb{C}})^E$
 is $U_{M(\mathcal{A})}$; 
 and \cite[Theorem 6.2.18]{Maclagan2015} states that 
 the extended tropicalisation commutes with taking closure in toric varieties.
\end{proof}

\begin{corollary}
 The tropical cohomology of the extended tropicalisation of $Y_\mathcal{A}$ is isomorphic to the singular cohomology of $Y_\mathcal{A}$.
\end{corollary}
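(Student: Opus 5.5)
The strategy is to chain two isomorphisms, both of which respect ring structure:
\[
\operatorname{H}^{\bullet}(Y_{\mathcal A};\mathbb{R})\;\simeq\;\operatorname{B}^\bullet(M(\mathcal A))\;\simeq\;\operatorname{H}^{\bullet}(Y_{M(\mathcal A)}),
\]
and then identify $Y_{M(\mathcal A)}$ with the extended tropicalisation of $Y_{\mathcal A}$ using \cref{exttrop}.

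The first isomorphism is the classical theorem of Huh--Wang \cite{Huh2017}. In \cite[Section 4]{Huh2017} (building on \cite{Ardila2016}) it is shown that $Y_{\mathcal A}$ has an affine paving whose cells are indexed by the flats of $M(\mathcal A)$. The cohomology ring of $Y_{\mathcal A}$ is then the graded M\"obius algebra, with $\operatorname{B}^p$ in degree $2p$. The odd cohomology vanishes, and the class of the closure of the cell indexed by the flat $F$ corresponds to $y_F$.

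The second isomorphism is \cref{thm2}. Combining the two gives $\operatorname{H}^{2p}(Y_{\mathcal A})\simeq \operatorname{H}^{p,p}(Y_{M(\mathcal A)})$. By \cref{thm1}, together with the vanishing of odd singular cohomology, the remaining bidegrees vanish on both sides.

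To make the identification natural rather than merely abstract, the plan is to check compatibility through $(\mathbb{P}^1_\mathbb{C})^E$. Both $\operatorname{H}^\bullet(Y_{\mathcal A})$ and $\operatorname{H}^{\bullet}(Y_M)$ are quotients of the cohomology of the ambient space, namely $\mathbb{R}[y_1,\dots,y_n]/(y_i^2)$. On the tropical side this holds because $R^\bullet\to \operatorname{B}^\bullet(M)$ is surjective, as in the proof of \cref{thm2}. On the classical side it holds because the Huh--Wang ring is generated by the pullbacks of the hyperplane classes. These two surjections are compatible with the tropicalisation map $(\mathbb{P}^1_\mathbb{C})^E\to(\mathbb{TP}^1)^E$. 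On the ambient space the comparison between singular and tropical cohomology is the standard isomorphism for toric varieties, via the Chow ring $\operatorname{A}^\bullet((\Pi^1)^E)$. Since both quotient maps kill the same ideal (each image is $\operatorname{B}^\bullet(M)$, of equal dimension by \cref{dimeq}), the induced map $\operatorname{H}^\bullet(Y_{\mathcal A})\to \operatorname{H}^{\bullet}(\operatorname{Trop}(Y_{\mathcal A}))$ is a ring isomorphism.

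The main obstacle is justifying this naturality: one needs a comparison map from the singular cohomology of a closed subvariety to the tropical cohomology of its extended tropicalisation that commutes with restriction from the toric ambient space. If such a functorial cycle-class-type map is not available for singular $Y_{\mathcal A}$, I would fall back on the abstract statement, which follows from \cref{thm2} and \cite{Huh2017} alone as an isomorphism of graded algebras. In that case the isomorphism is constructed through the common quotient of $\operatorname{A}^\bullet((\Pi^1)^E)$ described above.
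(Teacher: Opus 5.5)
Your proposal is correct and matches the paper's proof: the corollary follows by combining \cref{exttrop} (which identifies $Y_{M(\mathcal{A})}$ with the extended tropicalisation of $Y_{\mathcal{A}}$), \cref{thm2}, and the Huh--Wang isomorphism $\operatorname{H}^\bullet(Y_{\mathcal{A}})\simeq\operatorname{B}^\bullet(M(\mathcal{A}))$ from \cite[Theorem 14]{Huh2017}. The statement only asserts an abstract isomorphism, so your naturality discussion through $(\mathbb{P}^1_\mathbb{C})^E$ is optional, and your fallback argument is all that is needed.
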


\begin{proof}
  This follows from \cref{exttrop}, \cref{thm2}, and \cite[Theorem 14]{Huh2017} which states that 
  the graded M\"obius algebra is isomorphic to the singular cohomology of $Y_\mathcal{A}$.
\end{proof}

Since $Y_\mathcal{A}$ is generally not rationally smooth (for the smallest example, see \cite[Example 7.2]{Braden2025}),
this gives a family of examples of tropical cohomology coinciding with singular cohomology even when the variety is
not rationally smooth.

\begin{remark}\label{rmkcohomtrop}
    There are several comparison results showing that, under suitable hypotheses, tropical cohomology recovers the singular cohomology of a complex algebraic variety.
    For subvarieties of an algebraic torus, such results are known for complements of hyperplane arrangements \cite{Zharkov2013}, 
    for quasilinear varieties \cite{Schock2025}, 
    and more generally for cohomologically tropical varieties \cite{Aksnes2025}.
    For compact varieties, comparison results are available for smooth compact toric varieties \cite{Amini2024c} and for tropical compactifications of cohomologically tropical varieties \cite{Aksnes2025}.
    
    However, these results do not apply directly to arrangement Schubert varieties in general.
    Indeed, the existing comparison theorems are established in settings corresponding to smooth algebraic varieties, whereas arrangement Schubert varieties are typically singular.
    Moreover, the tropical spaces considered in \cite{Amini2024c,Aksnes2025} are regular at infinity in the sense of \cite{Mikhalkin2014}, whereas tropical matroid Schubert varieties need not be.
\end{remark}

\section{Examples}\label{examples}

In this section, we describe the stratification of $Y_M$ by admissible pairs in several explicit examples.
For simplicity, we write $\{j, \dots , n\}$ as $j \dots n$.
Throughout the examples, we show that $Y_M$ not only contains the information of $L(M)$, 
but also distinguishes loop elements in $M$.
In particular, the construction of $Y_M$ applies to arbitrary matroids, 
not merely to loopless matroids, as is often assumed in the literature.

\subsection{Boolean matroid of rank $2$}
Let $M = U_{2,2}$ be the Boolean matroid of rank $2$ on $E=12$ with the independence complex $\mathcal{I}(M)$ and the lattice of flats $L(M)$ as follows:

\begin{center}
  
  \begin{tikzcd}[column sep=-0.6em]
    && 12 \\
    {\mathcal{I}(M)=} &  1 && 2 \\
    && \varnothing
    \arrow[no head, from=1-3, to=2-4]
    \arrow[no head, from=2-2, to=1-3]
    \arrow[no head, from=2-2, to=3-3]
    \arrow[no head, from=2-4, to=3-3]
  \end{tikzcd},\qquad\qquad
  \begin{tikzcd}[column sep=-0.6em]
    && 12 \\
    {L(M)=} &  1 && 2 \\
    && \varnothing
    \arrow[no head, from=1-3, to=2-4]
    \arrow[no head, from=2-2, to=1-3]
    \arrow[no head, from=2-2, to=3-3]
    \arrow[no head, from=2-4, to=3-3]
  \end{tikzcd}.
\end{center}

Since $U_M = \mathbb{R}^2$, the variety $Y_M$, which is the closure of $U_M$ in $(\mathbb{TP}^1)^2$, is simply $(\mathbb{TP}^1)^2$.
Alternatively, one can observe that $M$ decomposes as $M=N\oplus O$, where $N$ and $O$ are rank-$1$ Boolean matroids on $1$ and $2$, respectively,
and $Y_N\simeq \mathbb{TP}^1$ and $Y_O\simeq \mathbb{TP}^1$.
By applying \cref{Y_Mproductdecomp}, we again get that $Y_M = (\mathbb{TP}^1)^2$.

The figure below shows the induced polyhedral structure of $Y_M$ by $\Sigma_M^+$, 
together with the stratification by admissible pairs.

\begin{center}
  \begin{tikzpicture}[scale=2.2, line cap=round, line join=round]

    \def\a{1.0} 
    
    \definecolor{edgeA}{RGB}{220,50,47}   
    \definecolor{edgeB}{RGB}{38,139,210}  
    \definecolor{edgeC}{RGB}{133,153,0}   
    \definecolor{edgeD}{RGB}{181,137,0}   
    
    \tikzset{
      edgebottom/.style={very thick, edgeA},
      edgeright/.style ={very thick, edgeB},
      edgetop/.style   ={very thick, edgeC},
      edgeleft/.style  ={very thick, edgeD},
      edgelabA/.style  ={text=edgeA, font=\normalsize},
      edgelabB/.style  ={text=edgeB, font=\normalsize},
      edgelabC/.style  ={text=edgeC, font=\normalsize},
      edgelabD/.style  ={text=edgeD, font=\normalsize},
      vtxdot/.style    ={circle, fill=black, inner sep=1.2pt},
      vtxlab/.style    ={text=black, font=\normalsize},
      origlab/.style   ={text=black, font=\normalsize}
    }
    
    \coordinate (SW) at (-\a,-\a);
    \coordinate (SE) at ( \a,-\a);
    \coordinate (NE) at ( \a, \a);
    \coordinate (NW) at (-\a, \a);
    
    \fill[black!10] (SW) -- (SE) -- (NE) -- (NW) -- cycle;
    
    \draw[edgebottom] (SW) -- (SE);
    \draw[edgeright]  (SE) -- (NE);
    \draw[edgetop]    (NE) -- (NW);
    \draw[edgeleft]   (NW) -- (SW);
    
    \node[edgelabA, below] at ($(SW)!0.5!(SE)$) {$M(\varnothing,1)$};
    \node[edgelabB, right] at ($(SE)!0.5!(NE)$) {$M(1,12)$};
    \node[edgelabC, above] at ($(NE)!0.5!(NW)$) {$M(2,12)$};
    \node[edgelabD, left]  at ($(NW)!0.5!(SW)$) {$M(\varnothing,2)$};
    
    \node[vtxdot] at (SW) {};
    \node[vtxdot] at (SE) {};
    \node[vtxdot] at (NE) {};
    \node[vtxdot] at (NW) {};
    
    \node[vtxlab, below left]  at (SW) {$M(\varnothing,\varnothing)$};
    \node[vtxlab, below right] at (SE) {$M(1,1)$};
    \node[vtxlab, above right] at (NE) {$M(12,12)$};
    \node[vtxlab, above left]  at (NW) {$M(2,2)$};
    
    \coordinate (O) at (0,0);
    
    \coordinate (E) at (\a,0);
    \coordinate (N) at (0,\a);
    \coordinate (W) at (-\a,0);
    \coordinate (S) at (0,-\a);
    
    \draw[thick] (O) -- (E);
    \draw[thick] (O) -- (N);
    \draw[thick] (O) -- (W);
    \draw[thick] (O) -- (S);
    \draw[thick] (O) -- (SW);
    
    \node[origlab, below right] at (O) {$M(\varnothing,12)$};
    
    \fill (O) circle (0.5pt);
    
    \end{tikzpicture}
\end{center}

\subsection{A matroid with parallel elements}\label{ex2}
Let $M$ be a matroid on $E=123$ with the independence complex $\mathcal{I}(M)$ and the lattice of flats $L(M)$ as follows:

\begin{center}
  
\begin{tikzcd}[column sep=-0.6em]
	& 13 && 23 \\
	{\mathcal{I}(M)=} & 1 & 2 & 3 \\
	&& \varnothing
	\arrow[no head, from=1-2, to=2-4]
	\arrow[no head, from=1-4, to=2-4]
	\arrow[no head, from=2-2, to=1-2]
	\arrow[no head, from=2-2, to=3-3]
	\arrow[no head, from=2-3, to=1-4]
	\arrow[no head, from=2-3, to=3-3]
	\arrow[no head, from=2-4, to=3-3]
\end{tikzcd},\qquad\qquad
\begin{tikzcd}[column sep=-0.6em]
	&& 123 \\
	{L(M)=} &  12 && 3 \\
	&& \varnothing
	\arrow[no head, from=1-3, to=2-4]
	\arrow[no head, from=2-2, to=1-3]
	\arrow[no head, from=2-2, to=3-3]
	\arrow[no head, from=2-4, to=3-3]
\end{tikzcd}.
\end{center}
Note that $1$ and $2$ are parallel elements, i.e., $cl(1) = cl(2) = 12$; and $3$
is a coloop, i.e., $E \setminus 3 = 12 \in L(M)$. 
Hence we may decompose $M$ into $M=N\oplus O$,
where 

\begin{center}
  \begin{tikzcd}[column sep=-0.6em]
	& 3 \\
	{L(N)=} \\
	& \varnothing
	\arrow[no head, from=1-2, to=3-2]
\end{tikzcd}, \qquad \qquad
\begin{tikzcd}[column sep=-0.6em]
	& 12 \\
	{L(O)=} \\
	& \varnothing
	\arrow[no head, from=1-2, to=3-2]
\end{tikzcd}.
\end{center}
Here $N$ is the rank-$1$ Boolean matroid on $3$, and $O$ is the matroid on $12$ consisting only parallel elements.

The figure below is the stratification of $Y_M$ by admissible pairs.

\begin{center}

\tdplotsetmaincoords{70}{120}

\newcommand{\MRAYS}{M(\varnothing,12)}
\newcommand{\UM}{M(\varnothing,123)}

\newcommand{\MAzero}{M(\varnothing,\varnothing)}
\newcommand{\MBzero}{M(2,12)}
\newcommand{\MCzero}{M(1,12)}

\newcommand{\MAone}{M(3,3)}
\newcommand{\MBone}{M(23,123)} 
\newcommand{\MCone}{M(13,123)}

\newcommand{\MeThreeA}{M(\varnothing,3)}
\newcommand{\MeThreeB}{M(2,123)}
\newcommand{\MeThreeC}{M(1,123)}

\begin{tikzpicture}[scale=4, tdplot_main_coords, line cap=round, line join=round]

\pgfdeclarelayer{bg}
\pgfdeclarelayer{fg}
\pgfsetlayers{bg,main,fg}

\definecolor{raycol}{RGB}{38,139,210}
\definecolor{cE3A}{RGB}{0,110,0}
\definecolor{cE3B}{RGB}{181,137,0}
\definecolor{cE3C}{RGB}{211,54,130}

\tikzset{
  cubeedge/.style={draw=gray!65, line width=0.45pt},
  ray/.style={line width=1.0pt, draw=raycol},
  raylabel/.style={text=raycol, font=\scriptsize},
  vsegA/.style={line width=1.0pt, draw=cE3A},
  vsegB/.style={line width=1.0pt, draw=cE3B},
  vsegC/.style={line width=1.0pt, draw=cE3C},
  vtxdot/.style={circle, fill=black, inner sep=0.6pt},
  vtxlab/.style={text=black, font=\scriptsize},
  vtxlabRay/.style={font=\scriptsize}, 
  midlab/.style={text=black, font=\scriptsize}
}

\coordinate (000) at (0,0,0);
\coordinate (100) at (1,0,0);
\coordinate (010) at (0,1,0);
\coordinate (110) at (1,1,0);

\coordinate (001) at (0,0,1);
\coordinate (101) at (1,0,1);
\coordinate (011) at (0,1,1);
\coordinate (111) at (1,1,1);

\coordinate (O0) at (1/2,1/2,0);
\coordinate (O1) at (1/2,1/2,1);

\coordinate (A0) at (0,0,0);
\coordinate (B0) at (1,1/2,0);
\coordinate (C0) at (1/2,1,0);

\coordinate (A1) at (0,0,1);
\coordinate (B1) at (1,1/2,1);
\coordinate (C1) at (1/2,1,1);

\begin{pgfonlayer}{bg}
  \fill[gray!85, opacity=0.65] (O0)--(A0)--(A1)--(O1)--cycle;
  \fill[gray!85, opacity=0.65] (O0)--(B0)--(B1)--(O1)--cycle;
  \fill[gray!85, opacity=0.65] (O0)--(C0)--(C1)--(O1)--cycle;
\end{pgfonlayer}

\draw[ray] (O0)--(A0);
\draw[ray] (O0)--(B0);
\draw[ray] (O0)--(C0);

\draw[ray] (O1)--(A1);
\draw[ray] (O1)--(B1);
\draw[ray] (O1)--(C1);

\node[raylabel, above right] at (O0) {$\MRAYS$};
\node[raylabel, above right] at (O1) {$M(3,123)$};
\node[midlab] at (0.55,0.55,0.5) {$\UM$};

\draw[vsegA] (A0)--(A1) node[pos=0.55, right,  text=cE3A, font=\scriptsize] {$\MeThreeA$};
\draw[vsegB] (B0)--(B1) node[pos=0.55, left, text=cE3B, font=\scriptsize] {$\MeThreeB$};
\draw[vsegC] (C0)--(C1) node[pos=0.55, right, text=cE3C, font=\scriptsize] {$\MeThreeC$};

\node[vtxdot] at (A0) {}; \node[vtxlab, below left]  at (A0) {$\MAzero$};
\node[vtxdot] at (B0) {}; \node[vtxlab, below right] at (B0) {$\MBzero$};
\node[vtxdot] at (C0) {}; \node[vtxlab, below right] at (C0) {$\MCzero$};

\node[vtxdot] at (A1) {}; \node[vtxlab,    above left]  at (A1) {$\MAone$};
\node[vtxdot] at (B1) {}; \node[vtxlabRay, above left] at (B1) {$\MBone$};
\node[vtxdot] at (C1) {}; \node[vtxlab,    above right] at (C1) {$\MCone$};

\begin{pgfonlayer}{fg}
  \draw[cubeedge] (000)--(100)--(110)--(010)--cycle;
  \draw[cubeedge] (001)--(101)--(111)--(011)--cycle;
  \draw[cubeedge] (000)--(001);
  \draw[cubeedge] (100)--(101);
  \draw[cubeedge] (010)--(011);
  \draw[cubeedge] (110)--(111);
\end{pgfonlayer}

\end{tikzpicture}
\end{center}
One can directly see that $Y_M = Y_N \times Y_O$ by observing that $M(23,123)=N$ and $M(\varnothing,12)=O$.
Also note that $M(1,123) \simeq M(2,123)$ but they appear separately in the stratification.
This shows that it is important to distinguish admissible pairs even 
when the corresponding minors are isomorphic. 

Moreover, $M(1,123)$ is a matroid on $23$ in which $2$ is a loop; 
geometrically, this is reflected by the fact that the corresponding 
stratum has no $e_2$-coordinate. 
Similarly, $M(2,123)$ is a matroid on $13$ in which $1$ is a loop, 
and the corresponding stratum has no $e_1$-coordinate. 
Here $e_i$ denotes the $i$-th standard basis vector of $\mathbb{R}^E$.

\appendix
\section{Induced polyhedral structure of a compactification}\label{toriccomp}

In this section, we discuss the induced polyhedral structure of a compactification.
This section is used in \cref{geomofYM} to define the induced polyhedral structure of $Y_M$ inside $(\mathbb{TP}^1)^E$.
Let $U$ be a fanfold in $\mathbb{R}^n$, $\Delta$ be a $d$-dimensional simplicial fan in $\mathbb{R}^n$ such that 
$U \subseteq |\Delta|$. 
Let $Y$ be the compactification of $U$ by $\Delta$.

When $U=|\Delta|$, $Y$ is the canonical compactification $\overline{\Delta}$ of $\Delta$.
In this case, its stratification can be induced from the 
torus orbit stratification of $\mathbb{TP}_\Delta = \coprod_{\eta \in \Delta} O(\eta)$ by
\[\overline{\Delta} = \coprod_{\eta \in \Delta} (O(\eta)\cap \overline{\Delta}) \] 
which can be understood easily by the fan structure of $\Delta$. 
Moreover, a polyhedral structure of $\overline{\Delta}$ can be induced from 
the fan structure of $\Delta$. 
More precisely, a polyhedral structure of $\overline{\Delta}$ can be given as follows. 
Let $\zeta \preceq \eta \in \Delta$. For the projection $\pi^\zeta : \mathbb{R}^n \to O(\zeta)$
defined in \cref{tropvar},
let $\square^{\zeta}_{\eta}$ be the closure of $\pi^\zeta(\eta)$ in $O(\zeta)$. 
Then $\{\square^\zeta_{\eta} \,\vert\, \zeta \preceq \eta\}$ is a polyhedral structure of $\overline{\Delta}$. 
This is the usual induced polyhedral structure on the canonical compactification.
See \cite[Section 2.3]{Amini2024c} for more details.

For a general compactification $Y$,
$O(\eta) \cap Y$ may be empty for $\eta \in \Delta$.
Let $U\subsetneq|\Delta|$. 
Our goal is to give a polyhedral structure on $Y$ induced from a fan structure of $U$. 
We first consider the case when the tropical toric variety is affine.
Fix $\eta \in \Delta$ and let $U_\eta \coloneq U \cap \eta$.
Let $A_\eta$ be the affine tropical toric variety by $\eta$. Recall that for a simplicial cone $\eta$, 
$A_\eta \simeq \mathbb{T}^{|\eta|} \times \mathbb{R}^{n-|\eta|}$.
The affine tropical toric variety $A_\eta$ admits a stratification $A_\eta=\coprod_{\zeta \preceq \eta}O(\zeta)$
    and the closure of $O(\zeta)$ in $A_\eta$ denoted by $V(\zeta)$ where is 
    $V(\zeta) = \coprod_{\zeta\preceq \upsilon \preceq \eta} O(\upsilon)$.
Let $\sigma \subseteq  U_\eta$ be a cone. Let $\overline{\sigma}$ be the closure of $\sigma$ in $A_\eta$
and $\overline{\sigma}^\zeta \coloneq\overline{\sigma} \cap O(\zeta)$ for $\zeta \preceq \eta$.

\begin{proposition}\label{projclos}
Let the setup be as above.
The set $\overline{\sigma}^\zeta$ is non-empty if and only if $\operatorname{relint}(\zeta) \cap \sigma \neq \varnothing$
and moreover, when $\overline{\sigma}^\zeta$ is non-empty, $\overline{\sigma}^\zeta = \pi^\zeta(\sigma)$.
\end{proposition}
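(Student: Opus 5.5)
We will prove this by working in explicit coordinates on $A_\eta$. Since $\eta$ is simplicial, we fix a basis $v_1,\dots,v_n$ of $\mathbb{R}^n$ whose first $m=|\eta|$ vectors are the rays of $\eta$. Then $A_\eta\simeq\mathbb{T}^m\times\mathbb{R}^{n-m}$, and a point $x=\sum_i x_iv_i$ of $\mathbb{R}^n$ has coordinates $(x_1,\dots,x_n)$. For a face $\zeta\preceq\eta$ spanned by the rays indexed by $Z\subseteq\{1,\dots,m\}$, the orbit $O(\zeta)$ is the locus where exactly the coordinates indexed by $Z$ equal $+\infty$. The projection $\pi^\zeta$ forgets these coordinates, i.e.\ sets them to $+\infty$. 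In these coordinates $\eta=\{x_i\ge 0 \text{ for } i\le m,\ x_j=0 \text{ for } j>m\}$, and $\sigma\subseteq\eta$ is a polyhedral cone.

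For the ``if'' direction and the inclusion $\pi^\zeta(\sigma)\subseteq\overline{\sigma}^\zeta$, assume $w\in\sigma\cap\operatorname{relint}(\zeta)$. Then $w_i>0$ for $i\in Z$ and $w_i=0$ otherwise. For any $x\in\sigma$, the point $x+tw$ lies in $\sigma$ for all $t\ge 0$ because $\sigma$ is a cone. As $t\to+\infty$, the coordinates in $Z$ tend to $+\infty$ and the others stay fixed, so $x+tw\to\pi^\zeta(x)$ in $A_\eta$. Hence $\pi^\zeta(\sigma)\subseteq\overline{\sigma}\cap O(\zeta)$, and in particular this set is nonempty.

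For the converse, and for $\overline{\sigma}^\zeta\subseteq\pi^\zeta(\sigma)$, take a sequence $x^{(k)}\in\sigma$ converging to a point $y\in O(\zeta)$. Then $x^{(k)}_i\to+\infty$ for $i\in Z$, and $x^{(k)}_j\to y_j$ for $j\notin Z$, so the latter coordinates are bounded. Write $\sigma=\operatorname{cone}(u_1,\dots,u_r)$ with generators in $\eta$, which have nonnegative $\eta$-coordinates, and write $x^{(k)}=\sum_l\lambda^{(k)}_lu_l$ with $\lambda^{(k)}_l\ge 0$.

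Split the generators into $G_0=\{l : (u_l)_j=0 \text{ for all } j\notin Z\}$ and the rest. For $l\notin G_0$, the vector $u_l$ has some coordinate $j\notin Z$. If $j>m$, then $(u_l)_j=0$ because $u_l\in\eta$, so in fact $j\le m$ and $(u_l)_j>0$. A positive multiple $\lambda_l^{(k)}u_l$ contributes to $x^{(k)}_j$ with the same sign as every other term, since all $\eta$-coordinates $j\le m$ are nonnegative. Boundedness of $x^{(k)}_j$ therefore forces $\lambda_l^{(k)}$ to stay bounded for $l\notin G_0$. The unbounded part is then a combination of the $u_l$ with $l\in G_0$, and these lie in $\sigma\cap\zeta$.

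The bounded coefficients have a convergent subsequence, so $x^{(k)}=b^{(k)}+g^{(k)}$ with $b^{(k)}\to b\in\sigma$ and $g^{(k)}\in\sigma\cap\zeta$. Since $\pi^\zeta$ kills $\zeta$, $y=\lim\pi^\zeta(x^{(k)})=\pi^\zeta(b)\in\pi^\zeta(\sigma)$. Moreover, $g^{(k)}$ must have all coordinates in $Z$ tending to infinity, so for large $k$ it is a point of $\sigma\cap\zeta$ with all $Z$-coordinates positive. Such a point lies in $\operatorname{relint}(\zeta)$, which gives $\sigma\cap\operatorname{relint}(\zeta)\neq\varnothing$.

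The main obstacle is this boundedness step. It relies on the sign positivity of the coordinates $1,\dots,m$ on $\eta$, so that no cancellation can occur among generators. The ordering of the steps above is designed so that this positivity is the only ingredient needed.
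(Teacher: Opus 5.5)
Your proof is correct, but for the reverse inclusion and the ``only if'' direction it takes a different route from the paper. Your step $x+tw\to\pi^\zeta(x)$ is exactly the paper's argument for $\pi^\zeta(\sigma)\subseteq\overline{\sigma}^\zeta$.

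For $\overline{\sigma}^\zeta\subseteq\pi^\zeta(\sigma)$ the paper argues purely topologically. It uses $\sigma\subseteq\pi^{-1}(\pi(\sigma))=\pi(\sigma)\times\langle\zeta\rangle$ together with the closedness of $\pi(\sigma)$. This step needs no positivity.

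For the ``only if'' direction the paper argues separately. It first shows that a nonempty $\overline{\sigma}^\zeta$ contains $\infty_\zeta$. It then derives a contradiction from a sequence $x_i=u_i\times z_i\in\sigma$ converging to $\infty_\zeta$ when $\sigma\cap\operatorname{relint}(\zeta)=\varnothing$, asserting that $u_i$ cannot tend to $0$ ``since $\sigma$ is closed''.

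Your single coefficient-boundedness argument gives both conclusions at once, at an arbitrary limit point $y$. It also makes explicit what that last step actually uses: finite generation of $\sigma$ together with the nonnegativity of the $\eta$-coordinates on $\sigma\subseteq\eta$. This is more than cosmetic, because closedness alone does not suffice. Take $\eta$ the positive orthant in $\mathbb{R}^3$ and $\zeta=\operatorname{cone}(e_1,e_2)$. The closed but non-polyhedral cone $\{x\in\eta : x_2^2\le x_1x_3\}$ misses $\operatorname{relint}(\zeta)$. Yet it contains $(k^3,k,1/k)$, which converges to $\infty_\zeta$ in $A_\eta=\mathbb{T}^3$.

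So your route gives a self-contained justification of the step the paper only sketches. The paper's route for the inclusion is softer, but it leaves the role of polyhedrality implicit.
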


\begin{proof}
  Assume first that $\operatorname{relint}(\zeta)\cap \sigma\neq \varnothing$.
  We prove $\overline{\sigma}^\zeta \subseteq \pi^\zeta(\sigma)$.
  For simplicity, set $\pi\coloneq \pi^\zeta$.
  Since $\sigma \subseteq \pi^{-1}(\pi(\sigma)) \subseteq O(\underline{0})$, we have
  \[
  \overline{\sigma}^\zeta=\overline{\sigma}\cap O(\zeta)
  \subseteq \overline{\pi^{-1}(\pi(\sigma))}\cap O(\zeta),
  \]
  where the closure is taken in $A_\eta$.
  Using the identification $O(\underline{0})\simeq O(\zeta)\times \langle\zeta\rangle$ for which $\pi$ is the projection
  onto the first factor, we have $\pi^{-1}(\pi(\sigma))=\pi(\sigma)\times \langle\zeta\rangle$.
  We claim that
  \[
  \overline{\pi(\sigma)\times \langle\zeta\rangle}\cap O(\zeta)=\pi(\sigma).
  \]
  Let $p\in\pi(\sigma)$. Choose a sequence $z_i\in \operatorname{relint}(\zeta)\subset \langle\zeta\rangle$ with $\|z_i\|\to\infty$.
  Then $(p,z_i)\in \pi(\sigma)\times \langle\zeta\rangle$ and $(p,z_i)\to p$ in $A_\eta$, hence
  $p\in \overline{\pi(\sigma)\times \langle\zeta\rangle}\cap O(\zeta)$.
  Conversely, consider $O(\zeta)\setminus \pi(\sigma)$.
  If $O(\zeta) =  \pi(\sigma)$, then we are done. Now let $p \in O(\zeta)\setminus \pi(\sigma)$.
  Since $\sigma$ is a polyhedral cone and $\pi$ is linear, $\pi(\sigma)$ is a polyhedral cone in $O(\zeta)$, hence closed.
  Thus there exists an open neighbourhood $W\subseteq O(\zeta)$ of $p$ such that $W\cap \pi(\sigma)=\varnothing$.
  Then $W\times \overline{\langle\zeta\rangle}$ is an open neighbourhood of $p$ in $A_\eta$ disjoint from
  $\pi(\sigma)\times \langle\zeta\rangle$, so $p\notin \overline{\pi(\sigma)\times \langle\zeta\rangle}$.
  This proves the claim, and therefore $\overline{\sigma}^\zeta\subseteq \pi(\sigma)$.
  
  We now prove $\overline{\sigma}^\zeta \supseteq \pi(\sigma)$ under the same assumption $\operatorname{relint}(\zeta)\cap \sigma\neq \varnothing$.
  Choose $v\in \operatorname{relint}(\zeta)\cap \sigma$.
  For any $q\in \sigma$ and $t\ge 0$, we have $q+t v\in \sigma$ since $\sigma$ is a cone.
  Moreover $\pi(q+t v)=\pi(q)$ and $t v$ goes to infinity in the $\zeta$-directions, hence $q+t v\to \pi(q)$ in $A_\eta$.
  Therefore $\pi(q)\in \overline{\sigma}\cap O(\zeta)=\overline{\sigma}^\zeta$ for all $q\in\sigma$, and thus $\pi(\sigma)\subseteq \overline{\sigma}^\zeta$.
  Combining with the previous inclusion, we obtain $\overline{\sigma}^\zeta=\pi(\sigma)$, in particular $\overline{\sigma}^\zeta\neq \varnothing$.
  
  Finally, assume $\overline{\sigma}^\zeta\neq \varnothing$.
  Since $\overline{\sigma}^\zeta$ is a closure of a cone, it contains $\infty_\zeta$. 
  Suppose $\operatorname{relint}(\zeta)\cap \sigma =\varnothing$. Let $(x_i)$ be a sequence in $\sigma$ that converges to $\infty_\zeta$.
  We may decompose $x_i$ by $x_i = u_i\times z_i \in O(\zeta)\times \langle \zeta \rangle $.
  Since $\operatorname{relint}(\zeta)\cap \sigma =\varnothing$ and $\sigma$ is closed, $u_i$ cannot converge to zero which gives a contradiction.
  This proves that $\overline{\sigma}^\zeta$ is non-empty if and only if $\operatorname{relint}(\zeta)\cap \sigma\neq \varnothing$,
  and in the non-empty case we have $\overline{\sigma}^\zeta=\pi^\zeta(\sigma)$.
\end{proof}

\begin{corollary}\label{clsigma}
    Let $\sigma \subseteq U_\eta$ be as above. Then 
    \[\overline{\sigma} = \coprod_{\substack{\zeta\preceq \eta\\\operatorname{relint}(\zeta)\cap \sigma \neq \varnothing}}\overline{\sigma}^\zeta.\]
\end{corollary}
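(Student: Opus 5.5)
The corollary says that the closure $\overline{\sigma}$ of $\sigma$ in $A_\eta$ is the disjoint union of the sets $\overline{\sigma}^\zeta$ over those faces $\zeta \preceq \eta$ with $\operatorname{relint}(\zeta)\cap\sigma\neq\varnothing$. The plan is to intersect $\overline{\sigma}$ with the orbit decomposition of $A_\eta$ and then discard the empty pieces using \cref{projclos}.

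First, recall that $A_\eta=\coprod_{\zeta\preceq\eta}O(\zeta)$ is a set-theoretic disjoint union. Hence for any subset of $A_\eta$, and in particular for $\overline{\sigma}$, we get
\[
\overline{\sigma}=\coprod_{\zeta\preceq\eta}\bigl(\overline{\sigma}\cap O(\zeta)\bigr)=\coprod_{\zeta\preceq\eta}\overline{\sigma}^\zeta .
\]
Disjointness of the pieces is inherited directly from disjointness of the orbits. Since every point of $\overline{\sigma}$ lies in exactly one orbit, nothing further is needed here.

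Second, apply the first assertion of \cref{projclos}: $\overline{\sigma}^\zeta\neq\varnothing$ if and only if $\operatorname{relint}(\zeta)\cap\sigma\neq\varnothing$. So the terms with $\operatorname{relint}(\zeta)\cap\sigma=\varnothing$ are empty and may be dropped from the union. This yields exactly the stated decomposition.

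Optionally, one can also record via \cref{projclos} that each surviving piece equals $\pi^\zeta(\sigma)$, which is the form used for the induced polyhedral structure. I expect no real obstacle, since all the topological work is already in \cref{projclos}. The only point needing care is that the closure is taken in $A_\eta$ and the orbits $O(\zeta)$ for $\zeta\preceq\eta$ exhaust $A_\eta$, so no part of $\overline{\sigma}$ escapes the decomposition.
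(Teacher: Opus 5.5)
Your proposal is correct and takes the same route as the paper. The paper simply says the decomposition follows from the orbit stratification $A_\eta=\coprod_{\zeta\preceq\eta}O(\zeta)$; you also make explicit that the empty pieces are dropped using the non-emptiness criterion of \cref{projclos}, which the paper leaves implicit.
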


\begin{proof}
  This follows directly from the stratification of $A_\eta$.
\end{proof} 

\begin{definition}[\cref{compfan}]
  Let $\Delta$ be a simplicial fan in $\mathbb{R}^n$, and let $\Sigma$ be a fan 
with the support $U\subseteq |\Delta|$. For a cone $\eta\in\Delta$, define the restriction
\[
\Sigma|_\eta \coloneq \{\sigma\in \Sigma \,\vert\, \sigma \subseteq \eta\}.
\]
We say $\Sigma$ is \emph{$\Delta$-compatible} if 
$|\Sigma|_\eta| = U\cap\eta$ for all $\eta\in\Delta$.

\end{definition}
  
  Given a $\Delta$-compatible fan $\Sigma$ and $\eta\in\Delta$, we denote by $\Sigma_\eta$
  a subfan of $\Sigma$ with support $U_\eta$.

\begin{remark}
    A $\Delta$-compatible fan always exists since we can take a common refinement of any fan $\Sigma$ with support $U$ and $\Delta$.
\end{remark}
    
Noting that $X = \mathbb{TP}_\Delta$ is defined by gluing $A_\eta$ for $\eta \in \Delta$,
we may understand $Y$, the compactification of $U$ by $\Delta$, 
by gluing $\overline{\sigma}$ accordingly.

\begin{definition}[\cref{indpolcom}]
  Let $\Sigma$ be a $\Delta$-compatible fan. The \emph{induced polyhedral structure} $\mathcal{Y}$ of $Y$ by $\Sigma$ is the collection of faces
  \[
  \mathcal{Y}=\{\pi^\zeta(\sigma)\mid \operatorname{relint}(\zeta)\cap \sigma\neq\varnothing,\ \sigma\in\Sigma,\ \zeta\in\Delta\}.
  \]
\end{definition}

\begin{proposition}
 The induced polyhedral structure $\mathcal{Y}$ by a $\Delta$-compatible fan $\Sigma$ is a polyhedral complex.
\end{proposition}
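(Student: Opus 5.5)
We need to show three things about $\mathcal{Y}$: every element is an extended polyhedron in $Y$, the family is closed under taking faces, and any two elements meet in a common face (possibly empty). The union of all elements should also be $Y$. The plan is to reduce everything to the affine charts $A_\eta$, where \cref{projclos} and \cref{clsigma} describe closures of cones explicitly.

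\textbf{Step 1: support.} Since $\Sigma$ is $\Delta$-compatible, $U=\bigcup_{\eta}U_\eta$, and each $U_\eta$ is the union of the cones of $\Sigma$ contained in $\eta$. Closure commutes with finite unions, so
\[Y=\bigcup_{\eta\in\Delta}\ \bigcup_{\sigma\in\Sigma,\ \sigma\subseteq\eta}\overline{\sigma},\]
where each closure may be computed inside $A_\eta$. This works because $\overline{\sigma}$ lies in $\bigcup_{\zeta\preceq\eta}O(\zeta)=A_\eta$, and $A_\eta$ is open in $\mathbb{TP}_\Delta$. By \cref{clsigma}, $\overline{\sigma}$ is the disjoint union of the sets $\pi^\zeta(\sigma)$ over those $\zeta\preceq\eta$ with $\operatorname{relint}(\zeta)\cap\sigma\neq\varnothing$. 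Each such $\zeta$ automatically satisfies $\zeta\preceq\eta$ whenever $\sigma\subseteq\eta$. Hence the union of the elements of $\mathcal{Y}$ is exactly $Y$. Each $\pi^\zeta(\sigma)$ is a polyhedral cone in the vector space $O(\zeta)$. Its closure in $\mathbb{TP}_\Delta$ is a union of cells $\pi^{\zeta'}(\sigma)$, again by \cref{projclos} applied in the chart. So each cell is relatively open in its closure, and this closure is an extended polyhedron. I would take the faces of $\mathcal{Y}$ to be these closures, which is the meaning needed for the cochain complexes.

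\textbf{Step 2: faces.} The faces of $\pi^\zeta(\sigma)$ inside $O(\zeta)$ are of the form $\pi^\zeta(\tau)$ for faces $\tau\preceq\sigma$. Moreover, $\tau$ can be chosen with $\operatorname{relint}(\zeta)\cap\tau\neq\varnothing$: take $\tau$ to be the smallest face of $\sigma$ containing the given preimage face together with $\sigma\cap\zeta$. This works because $\sigma\cap\zeta$ is a face of $\sigma$ when $\Sigma$ refines $\Delta$ on $\eta$. The remaining faces of the closure are the boundary strata $\pi^{\zeta'}(\tau)$ with $\zeta\preceq\zeta'$, which lie in $\mathcal{Y}$ by definition.

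\textbf{Step 3: intersections.} Take two elements $\pi^{\zeta}(\sigma)$ and $\pi^{\zeta'}(\sigma')$. Their relative interiors lie in the orbits $O(\zeta)$ and $O(\zeta')$, so their closures can meet only in orbits $O(\upsilon)$ with $\upsilon\succeq\zeta,\zeta'$, and these lie in a common chart $A_\eta$. Within each orbit $O(\upsilon)$, the intersection is
\[\pi^\upsilon(\sigma)\cap\pi^\upsilon(\sigma').\]
I would show this equals $\pi^\upsilon(\sigma\cap\sigma')$, with $\sigma\cap\sigma'$ a common face, which is in $\Sigma$. This is the main obstacle, because projections do not commute with intersections in general.

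The key input for Step 3 is that both $\sigma$ and $\sigma'$ lie in $\eta$, which is simplicial. Since $\eta$ is simplicial, $\pi^\upsilon$ restricted to $\eta$ is the coordinate projection killing the rays of $\upsilon$. The condition $\operatorname{relint}(\upsilon)\cap\sigma\neq\varnothing$ then gives $\sigma+\upsilon\subseteq\sigma$ near those directions, so $\sigma=\pi^{-1}(\pi(\sigma))\cap\eta$. Granting this saturation identity, the intersection argument goes as follows. Suppose $\pi(x)=\pi(x')$ with $x\in\sigma$ and $x'\in\sigma'$. Then, after adding large multiples of a point of $\operatorname{relint}(\upsilon)$, both points can be moved to a common point of $\sigma\cap\sigma'$.

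I would prove the saturation identity first, using the simplicial coordinates of $\eta$. It is the one place where simpliciality and $\Delta$-compatibility are genuinely used. Once it holds, intersections of closures are closures of $\sigma\cap\sigma'$, and this is a common face by Step 2.
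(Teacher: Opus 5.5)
Your Steps 1 and 2 expand carefully what the paper actually does. The paper applies \cref{projclos} chart by chart and asserts that the pieces $\pi^\zeta(\sigma)$ ``glue compatibly along overlaps''. You are right that intersections (Step 3) are the crux, but the lemma you propose there is false.

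\textbf{The saturation identity fails.} Take $\eta=\operatorname{cone}(e_1,e_2)\subseteq\mathbb{R}^2$, let $\Sigma$ be the subdivision of $\eta$ by the ray through $e_1+e_2$, and set $\upsilon=\operatorname{cone}(e_1)$ and $\sigma=\operatorname{cone}(e_1,e_1+e_2)$. Then $e_1\in\operatorname{relint}(\upsilon)\cap\sigma$ and $\pi^\upsilon(\sigma)=\mathbb{R}_{\geq 0}=\pi^\upsilon(\eta)$. So $(\pi^\upsilon)^{-1}(\pi^\upsilon(\sigma))\cap\eta$ is all of $\eta$, which contains $e_2\notin\sigma$. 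Meeting $\operatorname{relint}(\upsilon)$ constrains $\sigma$ only near $\upsilon$.

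\textbf{The identity you need, and the proposition itself, also fail under the stated hypotheses.} The identity in question is $\pi^\upsilon(\sigma)\cap\pi^\upsilon(\sigma')=\pi^\upsilon(\sigma\cap\sigma')$. In $\mathbb{R}^4$, let $\Delta$ consist of the faces of the positive orthant $\eta$, put $v=(1,1,0,0)$, and let $\Sigma$ consist of $\sigma=\operatorname{cone}(v,(1,0,1,0),(1,0,1,2))$, $\sigma'=\operatorname{cone}(v,(0,1,2,1),(0,1,0,1))$ and their faces.
\begin{itemize}
\item Both cones are simplicial, and $\sigma\cap\sigma'=\operatorname{cone}(v)$: the functional $x_1-x_2$ is $\geq 0$ on $\sigma$ and $\leq 0$ on $\sigma'$, vanishing on each only along $v$.
\item $\Sigma$ is automatically $\Delta$-compatible, because a face of $\eta$ meets any subcone of $\eta$ in a face.
\item For $\upsilon=\operatorname{cone}(e_1,e_2)$, both cones contain $v\in\operatorname{relint}(\upsilon)$.
\item In $O(\upsilon)\simeq\mathbb{R}^2$ (coordinates $x_3,x_4$) one gets $\pi^\upsilon(\sigma)=\operatorname{cone}((1,0),(1,2))$ and $\pi^\upsilon(\sigma')=\operatorname{cone}((2,1),(0,1))$.
\item These overlap in the two-dimensional cone $\operatorname{cone}((2,1),(1,2))$, which is a face of neither.
\end{itemize}
So no argument using only simpliciality and $\Delta$-compatibility can close Step 3.

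Two smaller points. In Step 1, $\overline{\sigma}$ stays inside $A_\eta$ because $\overline{\eta}\subseteq A_\eta$ is compact, not because $A_\eta$ is open. In Step 3, $\sigma$ and $\sigma'$ need not lie in a common cone of $\Delta$, so reducing to one chart needs an argument.

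\textbf{What is missing.} You need a hypothesis guaranteeing that, for each $\upsilon\in\Delta$, the images $\pi^\upsilon(\sigma)$ of the cones $\sigma\in\Sigma$ meeting $\operatorname{relint}(\upsilon)$ form a fan in $O(\upsilon)$. The paper's gluing sketch takes this for granted.

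One sufficient condition is that every $\upsilon$ with $\operatorname{relint}(\upsilon)\cap U\neq\varnothing$ is itself a cone of $\Sigma$, as for canonical compactifications. Then every relevant $\sigma$ has $\upsilon$ as a face, and Step 3 works with a separating functional instead of saturation:
\begin{itemize}
\item Pick $\ell$ with $\ell\geq 0$ on $\sigma$, $\ell\leq 0$ on $\sigma'$, and $\sigma\cap\ell^\perp=\sigma\cap\sigma'=\sigma'\cap\ell^\perp$.
\item Since $\upsilon\subseteq\sigma\cap\sigma'$, $\ell$ vanishes on $\langle\upsilon\rangle$.
\item Hence $\pi^\upsilon(x)=\pi^\upsilon(x')$ with $x\in\sigma$, $x'\in\sigma'$ forces $\ell(x)=\ell(x')=0$, so $x\in\sigma\cap\sigma'$.
\end{itemize}

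That condition fails for $\Sigma^+_M\subseteq(\Pi^1)^E$, since $\eta_{J,K}$ is not a cone of $\Sigma^+_M$ once $|K|\geq 2$. There the needed input is the explicit description in \cref{rankstrat}. A cone $\sigma_{I,\mathcal{F}}$ meeting $\operatorname{relint}(\eta_{J,E\setminus F})$ maps to the cone of $\Sigma^+_{M(J,F)}$ labelled $(I\setminus J,\mathcal{F}|_F)$. In these simplicial fans, intersections of cones correspond to intersections of labels. So your identity $\pi^\upsilon(\sigma)\cap\pi^\upsilon(\sigma')=\pi^\upsilon(\sigma\cap\sigma')$ does hold in that case, for combinatorial reasons rather than by saturation.
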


\begin{proof}
  Let $U = |\Sigma|$.
  The compatibility condition ensures that for any cone $\sigma \in \Sigma$, we can find a cone $\eta \in \Delta$ such that $\sigma \subseteq  U_\eta$ holds.
  Hence we can apply \cref{projclos} and get a polyhedral structure on each affine chart $A_\eta$.
  This induces a polyhedral structure on $Y$ by gluing $\pi^\zeta(\sigma)$ compatibly along overlaps.
\end{proof}

\begin{remark}
  The presentation $\pi^\zeta(\sigma)$ need not be unique, that is, there may exist $\sigma,\sigma'\in\Sigma$ with $\sigma\neq\sigma'$ 
  such that
  $\pi^\zeta(\sigma)=\pi^\zeta(\sigma')$.
\end{remark}

\begin{corollary}
  Let the setup be as above. Then
  \[
  Y=\coprod_{\substack{\eta\in\Delta\\ Y\cap O(\eta)\neq\varnothing}} (Y\cap O(\eta))
  =\coprod_{\substack{\eta\in\Delta\\ \operatorname{relint}(\eta)\cap U\neq\varnothing}} (Y\cap O(\eta)).
  \]
\end{corollary}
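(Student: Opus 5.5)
The plan is to derive both equalities from the stratification of $\mathbb{TP}_\Delta$ by torus orbits together with \cref{clsigma}. Since $\mathbb{TP}_\Delta = \coprod_{\eta\in\Delta} O(\eta)$ is a disjoint union, intersecting with $Y$ gives $Y=\coprod_{\eta} (Y\cap O(\eta))$. Discarding the empty pieces yields the first equality at once. The content of the statement is therefore the second equality: for $\eta\in\Delta$, $Y\cap O(\eta)\neq\varnothing$ if and only if $\operatorname{relint}(\eta)\cap U\neq\varnothing$.

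To prove this, I will fix a $\Delta$-compatible fan $\Sigma$ with support $U$, which exists by the remark following the definition. I then write $U=\bigcup_{\sigma\in\Sigma}\sigma$, a finite union, so that $Y=\bigcup_{\sigma\in\Sigma}\overline{\sigma}$, where the closure of a finite union is the union of the closures. By compatibility, each $\sigma$ lies in some cone $\eta'\in\Delta$. Its closure in $\mathbb{TP}_\Delta$ therefore lies in the affine chart $A_{\eta'}$. This needs a short point-set check: $\overline{\sigma}$ taken in $A_{\eta'}$ is compact, since it is the closure of a cone contained in $\eta'$, so it is closed in $\mathbb{TP}_\Delta$ because the space is Hausdorff. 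Hence
\[
Y\cap O(\eta)=\bigcup_{\sigma\in\Sigma}\overline{\sigma}\cap O(\eta).
\]
If $O(\eta)\not\subseteq A_{\eta'}$, that is, if $\eta\not\preceq\eta'$, the term indexed by $\sigma$ is empty. Otherwise \cref{projclos} says it is nonempty exactly when $\operatorname{relint}(\eta)\cap\sigma\neq\varnothing$.

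Now I assemble the two directions. If $Y\cap O(\eta)\neq\varnothing$, then some $\sigma$ meets $\operatorname{relint}(\eta)$, so $U$ does. Conversely, suppose $x\in\operatorname{relint}(\eta)\cap U$. Then $x\in U\cap\eta=|\Sigma|_\eta|$ by $\Delta$-compatibility, so $x\in\sigma$ for some $\sigma\in\Sigma$ with $\sigma\subseteq\eta$. Taking $\eta'=\eta$ in \cref{projclos} gives $\overline{\sigma}^{\eta}=\pi^\eta(\sigma)\neq\varnothing$, and hence $Y\cap O(\eta)\neq\varnothing$.

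I expect the main obstacle to be the point-set step that reduces the closure in the glued space $\mathbb{TP}_\Delta$ to closures inside single affine charts. \cref{projclos} is stated only within $A_\eta$, so the compactness and Hausdorff argument is needed to justify applying it chart by chart. The converse direction also uses compatibility essentially: it guarantees a cone of $\Sigma$ lying inside $\eta$ itself and not just meeting it.
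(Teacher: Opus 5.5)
Your proposal is correct and follows the same route as the paper. The paper fixes a $\Delta$-compatible fan $\Sigma$ and invokes \cref{projclos} to get $Y\cap O(\eta)\neq\varnothing$ if and only if $\operatorname{relint}(\eta)\cap U\neq\varnothing$; your compactness and Hausdorff argument, which reduces closures in $\mathbb{TP}_\Delta$ to closures inside a single chart $A_{\eta'}$, fills in a point-set step the paper leaves implicit.
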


\begin{proof}
  Fix a $\Delta$-compatible fan $\Sigma$. By \cref{projclos}, we have $Y\cap O(\eta)\neq\varnothing$ if and only if
  $\operatorname{relint}(\eta)\cap U\neq\varnothing$. This gives the stated decomposition.
\end{proof}

Let $Y^\eta \coloneq Y \cap O(\eta)$. 
Then $Y^\eta$ is a fanfold in $O(\eta)$ since we can induce a fan structure from $\Sigma$.
We call \[ Y=\coprod_{\substack{\eta\in\Delta\\ \operatorname{relint}(\eta)\cap U\neq\varnothing}} Y^\eta\]
the \emph{sedentary decomposition} of $Y$ by $\Delta$.

\bibliographystyle{alpha}
\bibliography{ref}

@Article{Zharkov2013,
  author  = {Zharkov, Ilia},
  journal = {Journal of G{\"o}kova Geometry Topology},
  title   = {The {O}rlik-{S}olomon algebra and the {B}ergman fan of a matroid},
  year    = {2013},
  pages   = {25--31},
  volume  = {7},
  ids     = {Zharkov},
}

@InCollection{Shaw2013a,
  author    = {Shaw, Kristin},
  booktitle = {Algebraic and Combinatorial Aspects of Tropical Geometry},
  publisher = {American Mathematical Society},
  title     = {Tropical (1,1)-homology for floor decomposed surfaces},
  year      = {2013},
  address   = {Providence, {RI}},
  editor    = {{E. Brugallé, M. A. Cueto, A. Dicken-stein, E.M. Feichtner, and I. Itenberg, editors}},
  pages     = {529--550},
  series    = {Contemporary mathematics},
  volume    = {589},
}

@Article{Payne2009,
  author    = {Payne, Sam},
  journal   = {Mathematical Research Letters},
  title     = {Analytification is the limit of all tropicalizations},
  year      = {2009},
  number    = {3},
  pages     = {543--556},
  volume    = {16},
  ids       = {Payne},
  publisher = {International Press of Boston},
}

@Book{Oxley2011,
  author     = {Oxley, James},
  publisher  = {Oxford University Press, New York},
  title      = {Matroid Theory},
  year       = {2011},
  collection = {Oxford graduate texts in mathematics},
}

@InCollection{Mikhalkin2014,
  author    = {Mikhalkin, Grigory and Zharkov, Ilia},
  booktitle = {Homological mirror symmetry and tropical geometry},
  publisher = {Springer},
  title     = {Tropical eigenwave and intermediate {J}acobians},
  year      = {2014},
  pages     = {309--349},
  series    = {Lecture Notes of the Unione Matematica Italiana},
  ids       = {MZ},
}

@Book{Maclagan2015,
  author    = {Maclagan, Diane and Sturmfels, Bernd},
  publisher = {American Mathematical Soc.},
  title     = {Introduction to tropical geometry},
  year      = {2015},
  volume    = {161},
  ids       = {MS, MaclaganSturmfels},
}

@Article{Kajiwara2008,
  author    = {Kajiwara, Takeshi},
  journal   = {Contemporary Mathematics},
  title     = {Tropical toric geometry},
  year      = {2008},
  pages     = {197--208},
  volume    = {460},
  ids       = {Kaj},
  publisher = {Providence, RI: American Mathematical Society},
}

@Article{Jell2018,
  author  = {Jell, Philipp and Rau, Johannes and Shaw, Kristin},
  journal = {{\'E}pijournal de G{\'e}om{\'e}trie Alg{\'e}brique},
  title   = {Lefschetz $(1, 1)$-theorem in tropical geometry},
  year    = {2018},
  pages   = {Art. 11, 27},
  volume  = {2},
  ids     = {Lefschetz11},
}

@Article{Jell2019,
  author    = {Jell, Philipp and Shaw, Kristin and Smacka, Jascha},
  journal   = {Advances in Geometry},
  title     = {Superforms, tropical cohomology, and {P}oincar{\'e} duality},
  year      = {2019},
  number    = {1},
  pages     = {101--130},
  volume    = {19},
  publisher = {De Gruyter},
}

@Article{Itenberg2019,
  author    = {Itenberg, Ilia and Katzarkov, Ludmil and Mikhalkin, Grigory and Zharkov, Ilia},
  journal   = {Mathematische Annalen},
  title     = {Tropical homology},
  year      = {2019},
  number    = {1-2},
  pages     = {963--1006},
  volume    = {374},
  publisher = {Springer},
}

@Book{Fulton1984,
  author    = {Fulton, William},
  publisher = {Springer-Verlag, Berlin},
  title     = {Intersection theory},
  year      = {1984},
  series    = {Ergebnisse der Mathematik und ihrer Grenzgebiete (3) [Results in Mathematics and Related Areas (3)]},
  volume    = {2},
  ids       = {FultonIntersectionTheory},
  pages     = {xi+470},
}

@Article{Braden2020,
  author    = {Braden, Tom and Huh, June and Matherne, Jacob and Proudfoot, Nicholas and Wang, Botong},
  journal   = {arXiv:2010.06088},
  title     = {Singular {Hodge} theory for combinatorial geometries},
  year      = {2020},
  _language = {en},
  _pages    = {95},
  abstract  = {We introduce the intersection cohomology module of a matroid and prove that it satisﬁes Poincare´ duality, the hard Lefschetz theorem, and the Hodge–Riemann relations. As applications, we obtain proofs of Dowling and Wilson’s Top-Heavy conjecture and the nonnegativity of the coefﬁcients of Kazhdan–Lusztig polynomials for all matroids.},
}

@Article{Amini2023,
  author  = {Amini, Omid and Piquerez, Matthieu},
  journal = {arXiv:2310.15367 to appear in Bulletin de la Société Mathématique de France},
  title   = {Hodge theory for tropical fans},
  year    = {2023},
}

@Article{Amini2020,
  author  = {Amini, Omid and Piquerez, Matthieu},
  journal = {arXiv:2007.07826},
  title   = {Hodge theory for tropical varieties},
  year    = {2020},
  ids     = {AP-hodge, AminiPiquerezHodge},
}

@Article{Adiprasito2018,
  author    = {Adiprasito, Karim and Huh, June and Katz, Eric},
  journal   = {Annals of Mathematics},
  title     = {Hodge theory for combinatorial geometries},
  year      = {2018},
  number    = {2},
  pages     = {381--452},
  volume    = {188},
  publisher = {JSTOR},
}

@Article{MR4477425,
  author     = {Braden, Tom and Huh, June and Matherne, Jacob P. and Proudfoot, Nicholas and Wang, Botong},
  journal    = {Adv. Math.},
  title      = {A semi-small decomposition of the {C}how ring of a matroid},
  year       = {2022},
  issn       = {0001-8708,1090-2082},
  pages      = {Paper No. 108646, 49},
  volume     = {409},
  doi        = {10.1016/j.aim.2022.108646},
  fjournal   = {Advances in Mathematics},
  mrclass    = {05B35 (14C25)},
  mrnumber   = {4477425},
  mrreviewer = {Joseph\ Kung},
}

@Article{Amini2024b,
  author        = {Amini, Omid and Piquerez, Matthieu},
  journal       = {arXiv:2405.05718},
  title         = {Homological smoothness and {D}eligne resolution for tropical fans},
  year          = {2024},
  archiveprefix = {arXiv},
  copyright     = {Creative Commons Attribution Non Commercial Share Alike 4.0 International},
  doi           = {10.48550/ARXIV.2405.05718},
  eprint        = {2405.05718},
  primaryclass  = {math.AG},
  publisher     = {arXiv},
}

@Article{Amini2024c,
  author        = {Amini, Omid and Piquerez, Matthieu},
  journal       = {arXiv:2405.05014},
  title         = {Tropical {F}eichtner-{Y}uzvinsky and positivity criterion for fans},
  year          = {2024},
  archiveprefix = {arXiv},
  copyright     = {Creative Commons Attribution Non Commercial Share Alike 4.0 International},
  doi           = {10.48550/ARXIV.2405.05014},
  eprint        = {2405.05014},
  primaryclass  = {math.AG},
  publisher     = {arXiv},
}

@Article{Huh2017,
  author   = {Huh, June and Wang, Botong},
  journal  = {Acta Math.},
  title    = {Enumeration of points, lines, planes, etc.},
  year     = {2017},
  number   = {2},
  pages    = {297-317},
  volume   = {218},
  doi      = {10.4310/ACTA.2017.v218.n2.a2},
  mrnumber = {3733101},
}

@Article{Dowling1974,
  author   = {Dowling, Thomas A. and Wilson, Richard M.},
  journal  = {Trans. Amer. Math. Soc.},
  title    = {The slimmest geometric lattices},
  year     = {1974},
  pages    = {203-215},
  volume   = {196},
  doi      = {10.2307/1997023},
  mrnumber = {345849},
}

@Article{Dowling1975,
  author   = {Dowling, Thomas A. and Wilson, Richard M.},
  journal  = {Proc. Amer. Math. Soc.},
  title    = {Whitney number inequalities for geometric lattices},
  year     = {1975},
  pages    = {504-512},
  volume   = {47},
  doi      = {10.2307/2039773},
  mrnumber = {354422},
}

@Article{Eur2023,
  author   = {Eur, Christopher and Huh, June and Larson, Matt},
  journal  = {Forum Math. Pi},
  title    = {Stellahedral geometry of matroids},
  year     = {2023},
  pages    = {Paper No. e24, 48 pp.},
  volume   = {11},
  doi      = {10.1017/fmp.2023.24},
  mrnumber = {4653766},
}

@Article{Aksnes2025,
  author   = {Aksnes, Edvard and Amini, Omid and Piquerez, Matthieu and Shaw, Kris},
  journal  = {J. Inst. Math. Jussieu},
  title    = {COHOMOLOGICALLY TROPICAL VARIETIES},
  year     = {2025},
  number   = {6},
  pages    = {2543-2572},
  volume   = {24},
  doi      = {10.1017/S1474748025101114},
  mrnumber = {4976580},
}

@Article{Brylawski,
  author   = {Brylawski, Thomas},
  journal  = {Matroid theory and its applications. Liguori editore, Naples.},
  title    = {The {T}utte polynomial. {I}. {G}eneral theory},
  year     = {1982},
  pages    = {125-275},
  mrnumber = {863010},
}

@Article{Shaw2023,
  author   = {Shaw, Kris and Werner, Annette},
  journal  = {Comb. Theory},
  title    = {On the birational geometry of matroids},
  year     = {2023},
  number   = {2},
  pages    = {Paper No. 17, 32 pp.},
  volume   = {3},
  doi      = {10.5070/c63261996},
  mrnumber = {4646098},
}

@Article{Gross2023,
  author   = {Gross, Andreas and Shokrieh, Farbod},
  journal  = {J. Algebra},
  title    = {A sheaf-theoretic approach to tropical homology},
  year     = {2023},
  pages    = {577-641},
  volume   = {635},
  doi      = {10.1016/j.jalgebra.2023.08.014},
  mrnumber = {4637248},
}

@Article{Ardila2016,
  author   = {Ardila, Federico and Boocher, Adam},
  journal  = {J. Algebraic Combin.},
  title    = {The closure of a linear space in a product of lines},
  year     = {2016},
  number   = {1},
  pages    = {199-235},
  volume   = {43},
  doi      = {10.1007/s10801-015-0634-x},
  mrnumber = {3439307},
}

@Article{Braden2025,
  author        = {Braden, Tom and Proudfoot, Nicholas},
  journal       = {arXiv:2510.09488 to appear in the Proceedings of the ICM 2026},
  title         = {Intersection cohomology without spaces},
  year          = {2025},
  archiveprefix = {arXiv},
  copyright     = {Creative Commons Attribution 4.0 International},
  doi           = {10.48550/ARXIV.2510.09488},
  eprint        = {2510.09488},
  primaryclass  = {math.AG},
  publisher     = {arXiv},
}

@Article{Bj_rner2009,
  author   = {Björner, Anders and Ekedahl, Torsten},
  journal  = {Ann. of Math. (2)},
  title    = {On the shape of {B}ruhat intervals},
  year     = {2009},
  number   = {2},
  pages    = {799-817},
  volume   = {170},
  doi      = {10.4007/annals.2009.170.799},
  mrnumber = {2552108},
}

@Article{beilinson1982faisceaux,
  author  = {Beilinson, Alexander and Bernstein, Joseph and Deligne, Pierre and Gabber, Ofer},
  journal = {Ast{\'e}risque},
  title   = {Faisceaux pervers},
  year    = {1982},
  pages   = {7--171},
  volume  = {100},
}

@Article{Fiebig2014,
  author   = {Fiebig, Peter and Williamson, Geordie},
  journal  = {Ann. Inst. Fourier (Grenoble)},
  title    = {Parity sheaves, moment graphs and the $p$-smooth locus of {S}chubert varieties},
  year     = {2014},
  number   = {2},
  pages    = {489-536},
  volume   = {64},
  doi      = {10.5802/aif.2856},
  mrnumber = {3330913},
}

@Article{Ardila2006,
  author   = {Ardila, Federico and Klivans, Caroline J.},
  journal  = {J. Combin. Theory Ser. B},
  title    = {The {B}ergman complex of a matroid and phylogenetic trees},
  year     = {2006},
  number   = {1},
  pages    = {38-49},
  volume   = {96},
  doi      = {10.1016/j.jctb.2005.06.004},
  mrnumber = {2185977},
}

@Article{Schock2025,
  author   = {Nolan Schock},
  journal  = {Advances in Mathematics},
  title    = {Quasilinear tropical compactifications},
  year     = {2025},
  issn     = {0001-8708},
  pages    = {110037},
  volume   = {461},
  doi      = {https://doi.org/10.1016/j.aim.2024.110037},
  url      = {https://www.sciencedirect.com/science/article/pii/S000187082400553X},
}

@Article{MR1503085,
  author   = {Whitney, Hassler},
  journal  = {Ann. of Math. (2)},
  title    = {The coloring of graphs},
  year     = {1932},
  issn     = {0003-486X,1939-8980},
  number   = {4},
  pages    = {688--718},
  volume   = {33},
  doi      = {10.2307/1968214},
  fjournal = {Annals of Mathematics. Second Series},
  mrclass  = {99-04},
  mrnumber = {1503085},
  url      = {https://doi.org/10.2307/1968214},
}
\end{document}